\crefname{hypothesis}{Hypothesis}{Hypotheses}
\title{Arbitrary high-order maximum principle-preserving and energy dissipating schemes for gradient flows \thanks{Submitted to the editors DATE.
\funding{Q. Cheng is supported by NSFC 12301522 and the Fundamental Research Funds for the Central Universities. T. Wang and X. Zhao are supported by National Key Research and Development Program of China
(No. 2024YFE03240400), National MCF Energy R\&D Program and NSFC 42450275, 12271413.}}}
\author{Qing Cheng\thanks{School of Mathematical Sciences, Tongji University, Shanghai, China 
  (\email{qingcheng@tongji.edu.cn}).} 
\and Tingfeng Wang\thanks{School of Mathematics and Statistics \& Computational Sciences Hubei Key Laboratory, Wuhan University, Wuhan, China 
  (\email{tingfengwang@whu.edu.cn}, \email{matzhxf@whu.edu.cn}).}
\and Xiaofei Zhao\footnotemark[3]}
\newtheorem{example}[theorem]{Example}
\newtheorem{assumption}{Assumption}
\newcommand{\fe}{\mathrm{e}}
\NewDocumentEnvironment{customproof}{O{Proof}}{%
    \begin{proof}[#1] 
    }{%
    \end{proof}
}
\begin{document}
    \maketitle
    
    \begin{abstract}
For gradient flows, the existing structure-preserving schemes are difficult to achieve arbitrary high-order accuracy in time while preserving maximum-principle (MBP) and energy dissipating simultaneously. In this paper, we develop a new framework for constructing structure-preserving schemes which shall preserve those nice properties. By introducing  KKT-conditions for energy dissipating and bound-preserving, we rewrite the original gradient flow into an expanded and coupled system. We shall utilize a novel predictor-corrector-corrector framework, termed the PCC method, which consists of a prediction  from any numerical scheme to the user's favor, followed by two correction steps designed to enforce energy stability and MBP, respectively. We take the exponential time differencing Runge-Kutta scheme (ETDRK) as an example and establish the unique solvability and  robust error analysis for our new framework. Extensive numerical experiments are provided to validate the efficiency and accuracy of our new approach. Enough numerical comparisons with the existing popular schemes are shown that our structure-preserving schemes can avoid numerical oscillations and capture the exact evolution of energy.
    \end{abstract}
    \begin{keywords}
        gradient flow, Lagrange multiplier, energy stability, maximum bound principle, error analysis, phase-field models
    \end{keywords}
    \begin{MSCcodes}
        65M12, 35K20, 35K35, 35K55, 65Z05
    \end{MSCcodes}
    
    \section{Introduction}
    Gradient flows, characterized by the evolution of a system toward a state of lower free energy, are ubiquitous in the mathematical modeling of physical, biological, and engineering processes. In this work, we consider a free energy functional $\mathcal{E}[\phi]$ defined on a bounded spatial domain $\Omega\subset\mathbb{R}^{d}$ (with $d=1,2,3$) of the form
    \begin{align}
        \mathcal{E}[\phi(\mathbf{x},t)] = \int_{\Omega} \left( \frac{1}{2}|\nabla\phi(\mathbf{x},t)|^2 + F(\phi(\mathbf{x},t)) \right)\, {\rm d}{\bf x}, \label{fml. free energy}
    \end{align}
    and a gradient flow given by
    \begin{align}
        \frac{\partial\phi}{\partial t} = -\mathcal{G} \mu, \quad \mu = \frac{\delta\mathcal{E}}{\delta \phi} = -\Delta\phi + f(\phi), \quad t\ge 0,\; \mathbf{x}\in\Omega, \label{eq. gradient flow}
    \end{align}
    where $\mathcal{G}$ is a nonnegative symmetric linear operator that determines the dissipation mechanism, $F(\phi)$ is the bulk free energy density, and $f(\phi)=F'(\phi)$. For simplicity, we impose boundary conditions (such as periodic, homogeneous Neumann, or Dirichlet) that do not contribute upon integrating by parts. Applications are made in interface dynamics, crystallization, thin film evolution et al. \cite{Anderson1998Diffuse,  Fraaije2003Model, Leslie1979Theory,Lowengrub1998QuasiIncompressible}.
    
    Defining the $L^2$-inner product by 
    $\langle u,v \rangle := \int_{\Omega} u\, \overline{v}\, {\rm d}\mathbf{x}$, 
    one readily verifies that the gradient flow \eqref{eq. gradient flow} satisfies the energy dissipation law
    \begin{align}
        \frac{{\rm d}\mathcal{E}[\phi]}{{\rm d} t} = \left\langle \frac{\delta\mathcal{E}}{\delta \phi}, \frac{\partial \phi}{\partial t} \right\rangle = -\langle \mu, \mathcal{G} \mu\rangle \le 0, \label{fml. energy stability}
    \end{align} 
    which is a desirable property from a physical point of view, commonly referred to as energy stability. Moreover, many models, such as the Allen–Cahn equation, exhibit a maximum bound principle (MBP),
    \begin{align}
        \|\phi(\cdot,t)\|_{L^{\infty}} \le \beta,\quad \forall\, t\ge 0, \label{fml. boundeness}
    \end{align}
    for some constant $\beta>0$. In cases where $\phi$ is confined to a general interval $[m, M]$, an affine transformation (e.g., $\psi=\frac{\beta}{M-m}(2\phi-M-m)$) will give \eqref{fml. boundeness}. 
    In practical simulations, maintaining energy stability and MBP in the numerical approximation of gradient flows offers significant benefits. A numerical scheme that strictly preserves these properties ensures physical realism by avoiding spurious, nonphysical oscillations or numerical blow-up, and leads to robust long-term behavior in applications such as phase separation and interface evolution.
    
    Over the years, numerous numerical methods have been developed to approximate gradient flows while preserving these nice properties. To preserve energy stability, classical time-stepping schemes have been proposed,  such as convex splitting methods \cite{Chen2019SecondOrder, Elliott1993GlobalDynamics}, stabilized semi-implicit schemes  \cite{ChenWangZhao,Li2017SecondOrder, Shen2015Efficient}. Recently, approaches based on exponential time differencing (ETD) and exponential Runge–Kutta methods \cite{du2021maximum, fu2024higher,Ju2018EnergyStability}, invariant energy quadratization approaches  \cite{Chen2017SecondOrder, Yang2017EfficientLinear, Yang2017Numerical}, and scalar auxiliary variable  approaches \cite{Cheng2020LagrangeMultiplier,Shen2019NewClass} have emerged as attractive alternatives, achieving unconditional energy stability. The efforts have also been devoted to developing numerical methods that preserve MBP. The discrete/numerical MBP could be achieved inherently by specially designed schemes, such as the semi-implicit finite difference \cite{shen2016maximum}, the BDF schemes \cite{hou2023linear}, the ETD schemes \cite{du2019maximum,du2021maximum}, the cutoff approach \cite{li2020arbitrarily} and the Lagrange multiplier approaches \cite{chengshen2022,cheng2022new,van2019positivity}.  It is  a still very challenge  topic for constructing arbitrary high-order schemes in time while preserving energy dissipating and MBP-preserving simultaneously.
    
    In this work, we aim to construct a numerical framework capable of simultaneously preserving both energy stability and MBP while achieving arbitrarily high-order accuracy. The key idea of our new framework is to introduce a new KKT-condition to enforce the energy inequality which can preserve the energy dissipating. The new KKT-condition for energy inequality is formulated by a scalar Lagrange multiplier. Combining with the idea in \cite{chengshen2022,cheng2022new}, we obtain a new expanded system for the original gradient flow system. Based on the expanded system, we shall utilize a novel predictor-corrector-corrector framework, termed the PCC method, which consists of a prediction  from any numerical scheme to the user's favor, followed by two correction steps designed to enforce energy stability and MBP, respectively.  The main contribution of this paper includes:
    \begin{itemize}
\item We introduce a new KKT-condition to enforce the constraint of  energy  inequality and obtain a new equivalent system to construct structure-preserving schemes for gradient flows.
\item We establish a new framework for constructing energy dissipating and MBP-preserving schemes which can be combined with most existing numerical methods, for example, ETD Runge Kutta schemes, BDF schemes.    
\item We give the unique solvability analysis of our numerical schemes by rewriting the correction steps as optimization problems.
\item We give a robust error analysis for the ETD Runge Kutta schemes based on our new framework.
    \end{itemize}
Furthermore, the efficiency and accuracy of the proposed schemes and its structure-preserving properties are also validated by numerical experiments.

    
    The rest of this paper is organized as follows. In Section~\ref{sec. 2}, we present and analyze our framework for constructing energy-stable and MBP-preserving schemes. Section~\ref{sec. 3} gives the explicit and efficient implementation, together with the convergence analysis. Section~\ref{sec. 4} investigates the impact from the spatial discretization, and Section~\ref{sec. 5} presents numerical results. 
   Conclusions are drawn in Section~\ref{sec. 6}.
    
    \section{Energy-stable and MBP-preserving schemes} \label{sec. 2}
    For simplicity of notation, we will omit the spatial variable of a function when it is not necessary, e.g., $f(t)=f(\mathbf{x},t)$. 
    The energy dissipative law in \eqref{fml. energy stability} immediately implies the monotonicity property for \eqref{eq. gradient flow},
    \begin{equation}\label{ineq:en}
    	\mathcal{E}(t_1) \leq \mathcal{E}(t_2), \quad\forall\, t_1 \ge t_2.
    \end{equation}
    The MBP can be read as the constraint $p(\phi) \geq 0$, where we introduce an additional quadratic function 
    $p(\phi) = -(\phi+\beta)(\phi-\beta)$. 
    Our goal is to construct numerical schemes for solving \eqref{eq. gradient flow} that satisfy a discrete analogue of \eqref{ineq:en} and the MBP. To meet both constraints, we introduce two Lagrange multipliers: $\eta$ and $\lambda=\lambda(\mathbf{x})$, and derive the corresponding Karush–Kuhn–Tucker (KKT) conditions (see, e.g., \cite{bergounioux1999primal, facchinei2003finite, harker1990finite, ito2008lagrange}). Consequently, we formulate the original gradient flow system \eqref{eq. gradient flow} as
    \begin{equation}\label{expand:g}
    	\begin{split}
    		&\partial_t \phi = -(\eta + \mathcal{G})\frac{\delta \mathcal{E}}{\delta \phi} + \lambda\, p'(\phi),\\[1mm]
    		&\eta \geq 0,\quad \mathcal{E}(t_1)-\mathcal{E}(t_2)\leq 0,\quad \eta\Bigl(\mathcal{E}(t_1)-\mathcal{E}(t_2)\Bigr)=0,\quad\forall\, t_1 \ge t_2,\\[1mm]
    		&\lambda \geq 0,\quad p(\phi)\geq 0,\quad \lambda\, p(\phi)=0.
    	\end{split}
    \end{equation}
    
    Applying an operator-splitting technique to \eqref{expand:g}, we obtain the following three sub-systems:
\begin{equation}\label{expand:0}
    	\partial_t \phi = -\mathcal{G}\frac{\delta \mathcal{E}}{\delta \phi};
    \end{equation}    
\begin{equation}\label{expand:1}
    	\begin{split}
    		&\partial_t \phi = -\eta\frac{\delta \mathcal{E}}{\delta \phi},\\[1mm]
    		&\eta \geq 0,\quad \mathcal{E}(t_1)-\mathcal{E}(t_2)\leq 0,\quad \eta\Bigl(\mathcal{E}(t_1)-\mathcal{E}(t_2)\Bigr)=0;
    	\end{split}
    \end{equation}
    \begin{equation}\label{expand:2}
    	\begin{split}
    		&\partial_t \phi = \lambda\, p'(\phi),\\[1mm]
    		&\lambda \geq 0,\quad p(\phi)\geq 0,\quad \lambda\, p(\phi)=0.
    	\end{split}
    \end{equation}
  Assuming that \eqref{eq. gradient flow} is in a Banach space $(X,\|\cdot\|)$,  \eqref{expand:1} and \eqref{expand:2} are essentially analogous to two $L^{2}$-projection processes. That is, for any given initial value $\widetilde{\phi}$,  \eqref{expand:1} can be interpreted as the following energy-constrained optimization problem:
    \begin{align*}
    	\min_{\phi\in X} \; \frac{1}{2}\|\phi-\widetilde{\phi}\|^2,\quad \text{s.t.} \quad \mathcal{E}(t_1)-\mathcal{E}(t_2)\leq 0, \quad\forall\, t_1 \ge t_2,
    \end{align*}
    and \eqref{expand:2} can similarly  be viewed as the maximum-bound constrained optimization:
    \begin{align*}
    	\min_{\phi\in X} \; \frac{1}{2}\|\phi-\widetilde{\phi}\|^2,\quad \text{s.t.}\quad -p(\phi)\leq 0.
    \end{align*}
    
    Based on the split systems \eqref{expand:1}--\eqref{expand:2}, we are going to develop numerical schemes that are simultaneously energy-dissipative and maximum-bound preserving. Our approach is based on a predictor-corrector strategy, where an arbitrary conventional numerical method can be used as predictor, followed by two consecutive correction steps (PCC) designed to enforce energy stability and MBP. 
    In what follows, we will detail the construction of the PCC schemes, where we focus on the time discretization here and discuss the spatial discretization later. Rigorous analysis is devoted to establishing the preservation of structures and the solvability of the schemes. 

    \subsection{A class of PCC schemes}
    
    Let $\tau>0$ be the time step size with the time grids $t_{n}=n\tau$ for $n=0,1,\cdots$, and denote $\phi^n\approx\phi(t_n)$ as the numerical solution. 
    Suppose that $\{\phi^{m}\}_{m=0}^{n}$ have been obtained, with the initial value $\phi^0$ given. 
    To proceed to the next time level $t_{n+1}$, we first adopt a consistent numerical integrator $\Phi_\tau(\cdot)$ to the user's favor, such as a semi-implicit finite difference or an ETD Runge–Kutta method, to obtain a prediction of the solution $\phi^{n,s+1}\approx \phi^{n+1}$ (here, the superscript $s$ is to reflect the temporal accuracy of $\Phi_\tau(\cdot)$). This predictor could be interpreted as a numerical integration of the flow \eqref{expand:0} from $t_n$ to $t_{n+1}$, which may not meet the MBP or energy stability. Next, with the two subsequent flows \eqref{expand:1} and \eqref{expand:2} properly discretized, e.g., by finite difference,  the predicted value $\phi^{n,s+1}$ will be corrected through them in a sequel. In such a framework, the detailed \textbf{PCC} scheme reads as follows:
    
    \medskip
    
    {\bf Step 1 (Predictor):} Solve 
    \begin{align}
    	\phi^{n,s+1} = \Phi_\tau(\phi^n), \label{scm. Predictor} 
    \end{align}
    for any consistent integrator $\Phi_\tau$.
    
    \medskip
    
    {\bf Step 2 (Corrector 1 -- energy stability projection):} With $\phi^{n,s+1}$, determine $\phi_{*}^{n+1}$ from
    \begin{subequations}\label{scm. Corrector 1}
    	\begin{align}
    		&\phi_{*}^{n+1} - \phi^{n,s+1} = -\eta^{n+1}\Bigl[-\Delta \phi_{*}^{n+1}+f\bigl(\phi^{n,s+1}\bigr)\Bigr], \label{scm. sub.a Corrector 1}\\[1mm]
    		&\eta^{n+1} \ge 0,\quad \mathcal{E}\bigl[\phi_{*}^{n+1}\bigr]-\mathcal{E}\bigl[\phi^{n}\bigr]\le 0,\quad \eta^{n+1}\Bigl(\mathcal{E}\bigl[\phi_{*}^{n+1}\bigr]-\mathcal{E}\bigl[\phi^{n}\bigr]\Bigr)=0. \label{scm. sub.b Corrector 1}
    	\end{align}
    \end{subequations}
    
    \medskip
    
    {\bf Step 3 (Corrector 2 -- maximum bound projection):} With $\phi_{*}^{n+1}$, compute $\phi^{n+1}$ from
    \begin{subequations}\label{scm. Corrector 2}
    	\begin{align}
    		&\phi^{n+1} - \phi_{*}^{n+1} = \lambda^{n+1}p'\bigl(\phi^{n+1}\bigr), \label{scm. sub.a Corrector 2}\\[1mm]
    		&\lambda^{n+1} \ge 0,\quad p\bigl(\phi^{n+1}\bigr)\ge 0,\quad \lambda^{n+1}p\bigl(\phi^{n+1}\bigr)=0. \label{scm. sub.b Corrector 2}
    	\end{align}
    \end{subequations}
    Here, $\eta^{n+1}$ and $\lambda^{n+1}$ can be interpreted as the scaled (by time step) Lagrange multipliers in \eqref{expand:1} and \eqref{expand:2}.  
    \subsection{Comparisons with existing schemes}
     Before getting into more detailed discussion and analysis, we use the Allen-Cahn equation as a numerical example (for detailed settings, see Example \ref{Ex 1} in Section \ref{sec. 5}), where the exact solution is bounded in interval $[-1,1]$, to quickly demonstrate the efficiency and accuracy of PCC. First, we compare PCC with several existing MBP-preserving schemes, as shown in Fig.~\ref{fig:test_030} and \ref{fig:test_031}. The benchmark schemes are:
    \begin{itemize}
    \item the first-order semi-implicit BDF1 scheme \cite{shen2016maximum};
    \item the exponential time-differencing (ETD) schemes of orders one and two \cite{du2021maximum};
    \item the second-order predictor-corrector BDF2-PC scheme \cite{cheng2022new}.
    \end{itemize}
    The first two methods preserve MBP only under suitable spatial discretization and stabilization parameters (which in the original formulations appear as time-step restrictions), and they fail to maintain the solution bound at machine precision when using Fourier-spectral discretization. The BDF2-PC scheme, while unconditionally MBP-preserving, does not guarantee discrete energy stability.
    
    Next, we solve the Allen-Cahn equation  within or without PCC. 
    Fig.~\ref{fig:test} shows the numerical behavior of the energy (\ref{fml. free energy}) and the extreme values of the numerical solution over time. In the plots, the schemes named with prefix `U' are energy unstable, and those named without suffix `PCC' mean proceeding only as predictor (without corrections). See more explanations in Section \ref{sec. 5.1}. 
     
    As shown in Fig.~\ref{fig:energy_s3}, the plain ETDRK schemes (U-ETDRK3, U-ETDRK4) exhibit strong energy oscillations, while PCC corrections fix the issue and offer energy stability. Figs.~\ref{fig:boundary_s3} and \ref{fig:Lboundary_s3} show the violation of MBP in the plain schemes, and PCC strictly (to machine precision) confines numerical solutions within the exact bounds.  

    \begin{figure}[t!] 
        \centering
        \subfloat[]{%
            \includegraphics[width=0.45\textwidth]{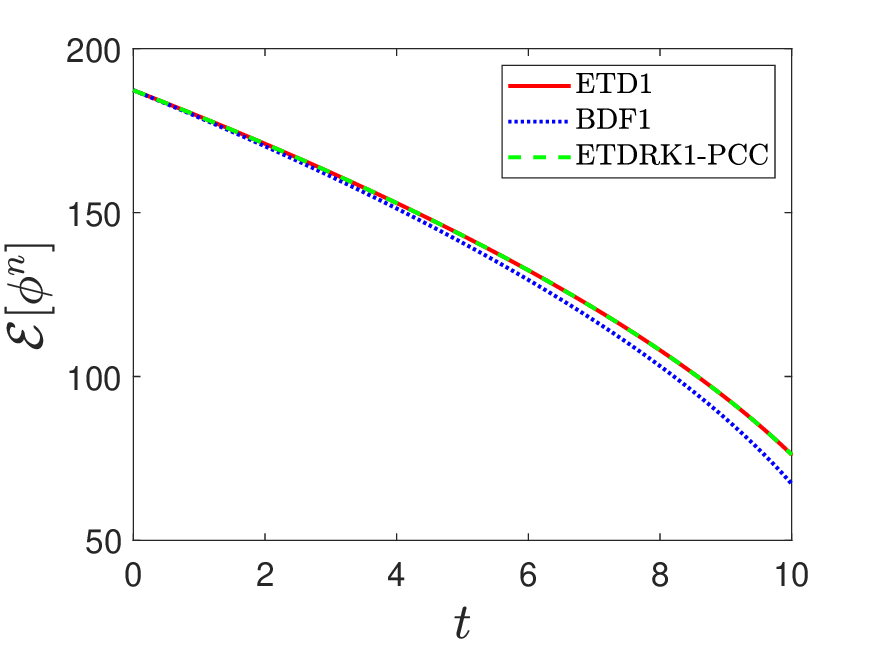} 
            \label{fig:energy_030}
        }
        \subfloat[]{%
            \includegraphics[width=0.45\textwidth]{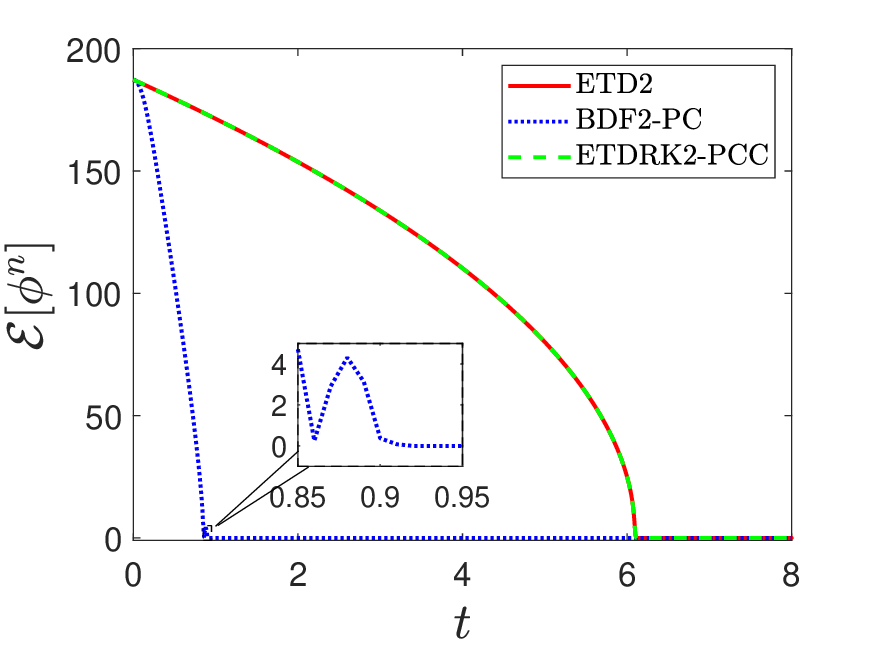} 
            \label{fig:boundary_030}
        }
        \caption{(Example \ref{Ex 1}) Time evolution of energy $\mathcal{E}[\phi^{n}]$ of the numerical solution $\phi^{n}$ with $\tau = 0.01$ and $S=2/\varepsilon^2$.}
        \label{fig:test_030}
    \end{figure}

    \begin{figure}[t!] 
        \centering
        \subfloat[]{%
            \includegraphics[width=0.45\textwidth]{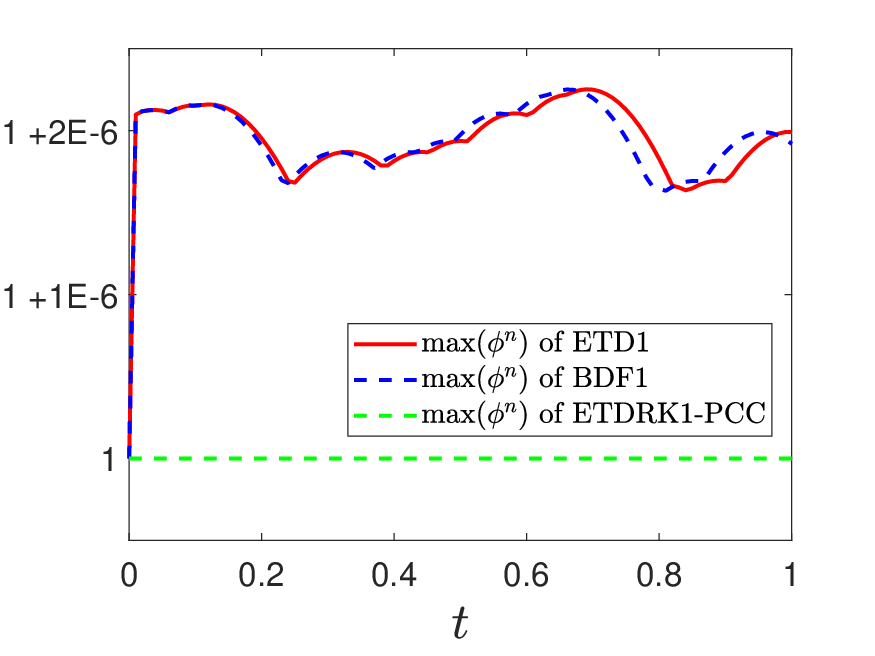} 
            \label{fig:energy_031}
        }
        \subfloat[]{%
            \includegraphics[width=0.45\textwidth]{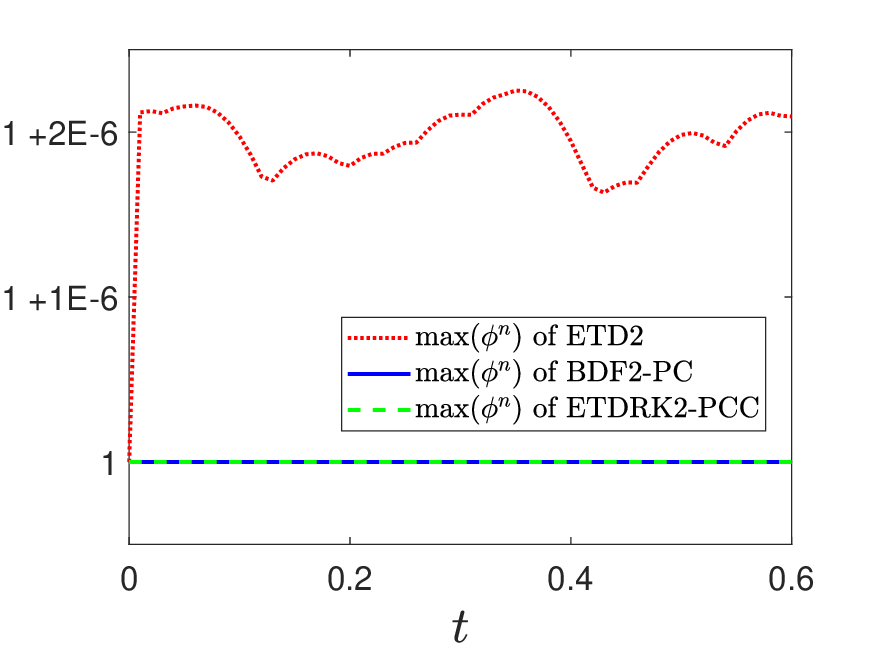} 
            \label{fig:boundary_031}
        }
        \caption{(Example \ref{Ex 1}) The upper bounds of the numerical solution $\phi^{n}$ with $\tau = 0.01$ and $S=2/\varepsilon^2$.}
        \label{fig:test_031}
    \end{figure}
    
    \begin{figure}[t!] 
        \centering
        \subfloat[]{%
            \includegraphics[width=0.31\textwidth]{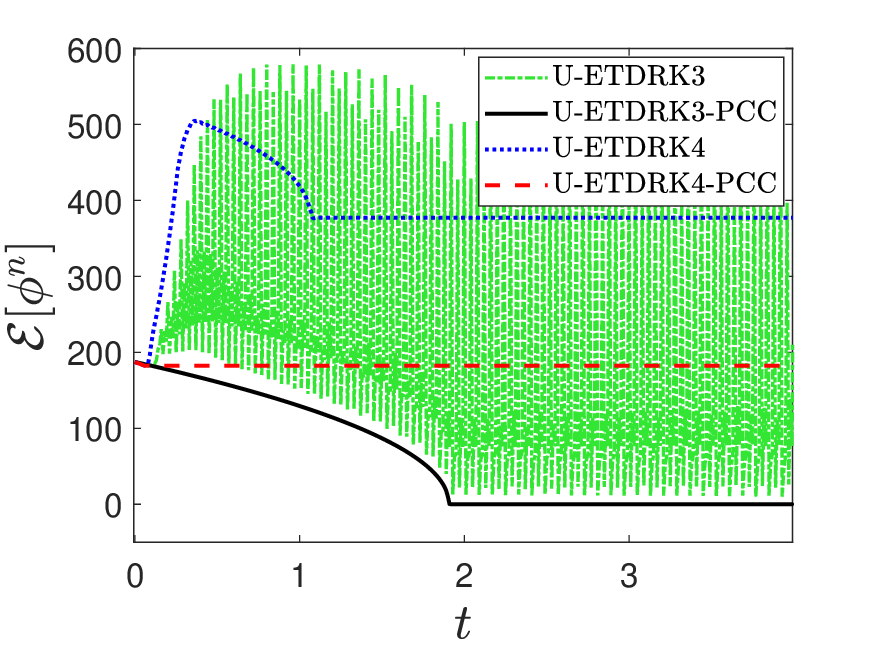} 
            \label{fig:energy_s3}
        }
        \subfloat[]{%
            \includegraphics[width=0.31\textwidth]{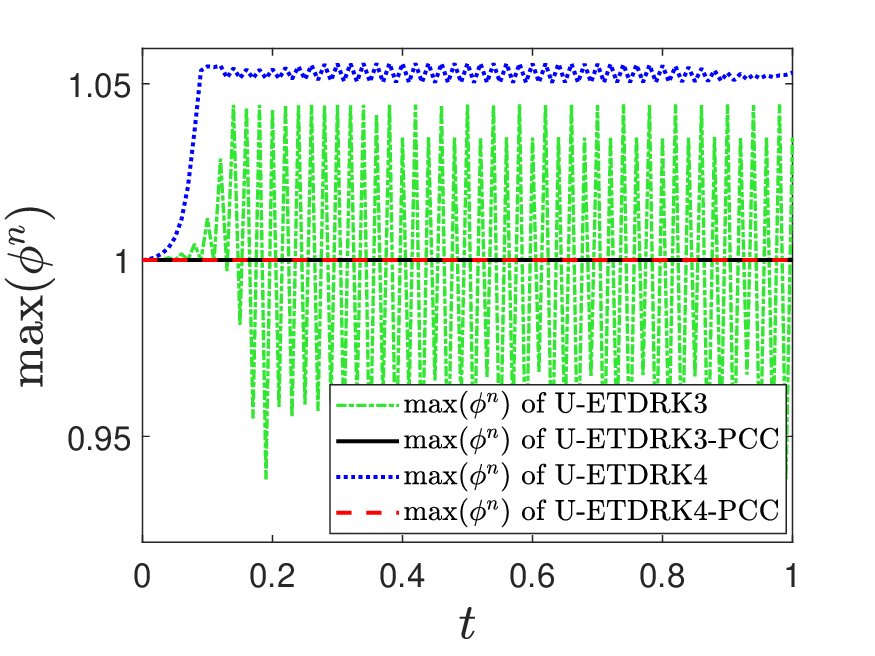} 
            \label{fig:boundary_s3}
        }
        \subfloat[]{%
            \includegraphics[width=0.31\textwidth]{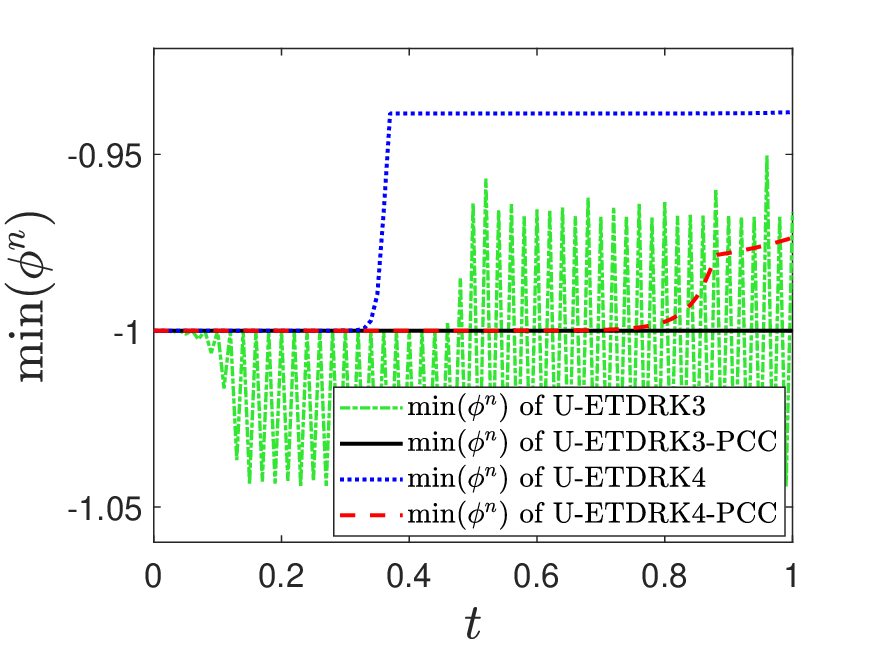} 
            \label{fig:Lboundary_s3}
        } 
        \caption{(Example \ref{Ex 1}) Time evolution of energy $\mathcal{E}[\phi^{n}]$, and the upper and lower bounds of the numerical solution $\phi^{n}$ from U-ETDRK and  U-ETDRK-PCC schemes with $\tau = 0.01$ and $S=1/\varepsilon^2$.}
        \label{fig:test}
    \end{figure}

    Note that the system \eqref{scm. Corrector 1} involved in Step 2 of the PCC algorithm is nonlinear. Here, we provide a practical solver for it. 
    By \eqref{scm. sub.a Corrector 1}, the dependence of $\phi_{*}^{n+1}$ on $\eta^{n+1}$ can be formally expressed as
    $\phi_{*}^{n+1}(\eta^{n+1}) = \left(I-\eta^{n+1}\Delta\right)^{-1}\big[\phi^{n,s+1} - \eta^{n+1}\,f\bigl(\phi^{n,s+1}\bigr)\big]$, where $I$ denotes the identity operator.
    And we define 
    $$D(\eta^{n+1})= \mathcal{E}\bigl[\phi_{*}^{n+1}\bigr]-\mathcal{E}\bigl[\phi^{n}\bigr].$$
    Then, $\eta^{n+1}$ can be determined using the following argument:
    $$
    \begin{array}{lll}
        \mbox{if } D(0)\le 0, & \eta^{n+1}=0, \\[1mm]
        \mbox{if } D(0) > 0, & \eta^{n+1} > 0 \mbox{ such that } D(\eta^{n+1})=0.
    \end{array}
    $$
    In the latter case, $\eta^{n+1}$ is the root of the scalar equation $D(\eta^{n+1})=0$, which can be solved by Newton's method:
    $
    \eta^{n+1}_{k+1} = \eta^{n+1}_{k} - D(\eta^{n+1}_{k})/{D'(\eta^{n+1}_{k})},\ k=0,1,\ldots,
    $
    with a practically effective initial guess $\eta^{n+1}_{0}=0$. 
    The derivative $D'(\eta^{n+1}_{k})$ is computed as \begin{align}
        D'(\eta^{n+1}_k) &= \left\langle \frac{\delta \mathcal{E}\bigl[\phi_{*}^{n+1}\bigr]}{\delta\phi_{*}^{n+1}},\ \frac{\partial \phi_{*}^{n+1}}{\partial \eta^{n+1}}\right\rangle\Big|_{\eta^{n+1}=\eta^{n+1}_k}
        = \Big\langle -\Delta\phi_{*}^{n+1} + f(\phi_{*}^{n+1}), \nonumber\\ & \left( I - \eta_k^{n+1}\Delta \right)^{-2}\Delta\bigl( \phi^{n,s+1} - \eta^{n+1}_k f(\phi^{n,s+1}) \bigr)- \left(I - \eta^{n+1}_k\Delta\right)^{-1}f(\phi^{n,s+1}) \Big\rangle. \label{fml. D diff}
    \end{align}
    
    For Step 3 of PCC, the solution of \eqref{scm. Corrector 2} is in fact a direct cut-off \cite{cheng2022new}, given explicitly as: 
    \begin{align}
        \phi^{n+1} &= 
        \begin{cases}
            -\beta, & \phi_{*}^{n+1} < -\beta, \\
            \phi_{*}^{n+1}, & -\beta \le \phi_{*}^{n+1} \le \beta, \\
            \beta, & \phi_{*}^{n+1} > \beta,
        \end{cases}
        &\lambda^{n+1} =
        \begin{cases}
            \frac{1}{2\beta}\left(-\beta - \phi_{*}^{n+1}\right), & \phi_{*}^{n+1} < -\beta, \\[1mm]
            0, & -\beta \le \phi_{*}^{n+1} \le \beta, \\[1mm]
            \frac{1}{2\beta}\left(\phi_{*}^{n+1} - \beta\right), & \phi_{*}^{n+1} > \beta.
        \end{cases} \label{fml. compute phi star}
    \end{align}

    \subsection{Solvability and structure-preservation}
    We are devoted to analyzing the PCC scheme and we shall begin with its theoretical solvability. To do so, we make the following assumption on the numerical solution (see also \cite{cheng2025unique}).
    
    \begin{assumption}\label{ass. unsteady state}
        We assume that there exists a positive constant $\tilde\epsilon>0$ such that the numerical solution $\phi^{n}$ at $t=n\tau$, $\forall n\leq \frac{T}{\tau}$ satisfies
        \begin{align}
            \left\| -\Delta\phi^{n}+f(\phi^{n}) \right\|^2 \ge \tilde \epsilon. \label{eq:unsteady state}
        \end{align}
    \end{assumption}

    In fact, Assumption~\ref{ass. unsteady state}  may serve as a practical stopping criterion for the PCC scheme. The violation of it indicates that the numerical solution has already reached a steady state.  In such sense, Assumption~\ref{ass. unsteady state} is typically satisfied during the computation.
    
    \begin{theorem}[Existence and uniqueness]\label{thm. existence for ETDRK-PCC}
        Under Assumption~\ref{ass. unsteady state}, there exists a constant $\tau_{1} > 0$ such that for $\tau < \tau_{1}$,  the nonlinear system \eqref{scm. Corrector 1} has a unique solution $\eta^{n+1}$. Moreover, it is always possible to choose $\tau$ sufficiently small so that $\eta^{n+1}$ is arbitrarily small.
    \end{theorem}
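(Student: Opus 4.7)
The plan is to reduce the nonlinear KKT system \eqref{scm. Corrector 1} to a scalar equation in the single unknown $\eta^{n+1}$, by exploiting the explicit parametrization $\phi_{*}^{n+1}(\eta)=(I-\eta\Delta)^{-1}[\phi^{n,s+1}-\eta f(\phi^{n,s+1})]$ coming from \eqref{scm. sub.a Corrector 1}, which is well defined for every $\eta\ge 0$ because $-\Delta$ is a nonnegative self-adjoint operator under the prescribed boundary conditions. Setting $D(\eta):=\mathcal{E}[\phi_{*}^{n+1}(\eta)]-\mathcal{E}[\phi^{n}]$, the complementarity conditions in \eqref{scm. sub.b Corrector 1} become the scalar problem of finding $\eta^{n+1}\ge 0$ such that $D(\eta^{n+1})\le 0$ and $\eta^{n+1}D(\eta^{n+1})=0$.

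First I would verify that $D\in C^{1}([0,\infty))$ and specialize the formula \eqref{fml. D diff} to $\eta=0$, which simplifies to
\begin{equation*}
D'(0)=-\bigl\|{-\Delta\phi^{n,s+1}+f(\phi^{n,s+1})}\bigr\|^{2}.
\end{equation*}
By consistency of the predictor $\Phi_{\tau}$, $\phi^{n,s+1}\to\phi^{n}$ and hence $-\Delta\phi^{n,s+1}+f(\phi^{n,s+1})\to-\Delta\phi^{n}+f(\phi^{n})$ as $\tau\to 0$; together with Assumption~\ref{ass. unsteady state}, this gives $D'(0)\le -\tilde\epsilon/2$ for $\tau$ smaller than some threshold $\tau_{1}'$. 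Continuity of $D'$, via local Lipschitz control of $f$ and the resolvent bound $\|(I-\eta\Delta)^{-1}\|_{L^{2}\to L^{2}}\le 1$, then yields an interval $[0,\eta_{0}]$ on which $D'\le -\tilde\epsilon/4$, so $D$ is strictly decreasing there.

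Next I would dichotomize according to the sign of $D(0)=\mathcal{E}[\phi^{n,s+1}]-\mathcal{E}[\phi^{n}]$, which is $O(\tau)$ by consistency. If $D(0)\le 0$, then $\eta^{n+1}=0$ satisfies the KKT conditions, and strict monotonicity on $[0,\eta_{0}]$ excludes any other admissible zero. If $D(0)>0$, the intermediate value theorem combined with strict monotonicity on $[0,\eta_{0}]$ produces a unique zero $\eta^{n+1}\in(0,\eta_{0})$ of $D$, and a mean-value argument yields the bound $\eta^{n+1}\le 4D(0)/\tilde\epsilon=O(\tau)$. In either case $\eta^{n+1}$ is uniquely determined and can be made arbitrarily small by shrinking $\tau$, which establishes the \emph{moreover} assertion; taking $\tau_{1}$ to be the minimum of the various smallness thresholds encountered along the way completes the argument.

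The hard part will be making the strict-monotonicity neighborhood $[0,\eta_{0}]$ uniform in $\tau$ (and compatible with the a priori size $O(\tau)$ of the sought root), and ruling out spurious large-$\eta$ solutions of $D(\eta)=0$ that would break uniqueness. This requires quantitative consistency bounds for $\Phi_{\tau}$, control of the modulus of continuity of $D'$ via the smoothness of $f$, and the resolvent estimate mentioned above. Once these ingredients are collected, the remainder of the proof is a routine application of scalar complementarity analysis.
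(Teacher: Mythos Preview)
Your proposal is correct but follows a genuinely different route from the paper. The paper treats $D$ as a function of \emph{two} variables $(\tau,\eta)$, observes that $D(0,0)=0$ (since $\Phi_{0}(\phi^{n})=\phi^{n}$) and $D_{\eta}(0,0)=-\|{-\Delta\phi^{n}+f(\phi^{n})}\|^{2}\le-\tilde\epsilon$ directly from Assumption~\ref{ass. unsteady state}, and then invokes the implicit function theorem (Lemma~\ref{lem. implicit function}) to produce a continuous branch $\eta=\eta(\tau)$ with $\eta(0)=0$. You instead fix $\tau$, work with $D(\eta)$ as a function of one variable, and combine strict monotonicity near $\eta=0$ with the intermediate value theorem; this forces you to pass through consistency of the predictor to transfer Assumption~\ref{ass. unsteady state} from $\phi^{n}$ to $\phi^{n,s+1}$. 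Your approach is more elementary, makes the KKT dichotomy (active versus inactive constraint) explicit, and delivers the quantitative bound $\eta^{n+1}\le 4D(0)/\tilde\epsilon=O(\tau)$ for free, whereas the paper's implicit-function argument is shorter and avoids any consistency estimate for $\Phi_{\tau}$. Both proofs yield only \emph{local} uniqueness---neither excludes additional large-$\eta$ roots of $D(\eta)=0$---so your worry about ``spurious large-$\eta$ solutions'' applies equally to the paper's argument and should be read as uniqueness within a neighborhood of the origin.
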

    
    To prove Theorem~\ref{thm. existence for ETDRK-PCC}, we utilize the implicit function theorem given below.
    
    \begin{lemma}[\cite{zorich2016mathematical}, Section 10.7, Implicit function theorem]\label{lem. implicit function}
        If a function $D(\tau,\eta)$ satisfies the following conditions:
        \begin{enumerate}
            \item[1)] $D(\tau,\eta)$ is continuous on a region $V \subset \mathbb{R}^2$ that contains $P_{0}=(0, 0)$ as an interior point;
            \item[2)] $D_{\eta}(\tau, \eta):=\partial_\eta D(\tau,\eta)$ is continuous  in $V$;
            \item[3)] $D(0,0) = 0$ and $D_{\eta}(0, 0) \neq 0$.
        \end{enumerate}
        Then, there exist a constant $\tau_1>0$ and  a neighborhood of $P_0$ denoted as $U(P_0) \subset V$ such that the equation $D(\tau, \eta) = 0$ within $U(P_0)$ uniquely defines a continuous function $\eta = \eta(\tau)$ for $\tau \in (-\tau_1, \tau_1)$. Moreover, for each $\tau \in (-\tau_1,\tau_1)$, the point $(\tau, \eta(\tau))$ lies in $U(P_0)$, and we have $D(\tau, \eta(\tau)) = 0$ and $\eta(0) = 0$.
    \end{lemma}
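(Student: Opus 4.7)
The plan is to prove this scalar implicit function theorem via the monotonicity-plus-intermediate-value approach, which fits the hypotheses naturally (only continuity of $D$ and $D_\eta$ is assumed, not $C^1$ regularity in $\tau$). First, exploiting condition~3, assume without loss of generality that $D_\eta(0,0)>0$ (the opposite sign is symmetric). By condition~2 and condition~1 there is a closed rectangle $R=[-a,a]\times[-b,b]\subset V$ and a constant $c>0$ with $D_\eta(\tau,\eta)\geq c$ on $R$; this is a routine compactness/continuity argument, using that $P_0$ is an interior point of $V$. Consequently, for each fixed $\tau\in[-a,a]$ the map $\eta\mapsto D(\tau,\eta)$ is strictly increasing on $[-b,b]$ with slope at least $c$.

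Next, I would pin down the solution set. From $D(0,0)=0$ and the strict monotonicity on the slice $\tau=0$, one has $D(0,b)\geq cb>0$ and $D(0,-b)\leq -cb<0$. Continuity of $D$ in the full variable $(\tau,\eta)$, combined with condition~1, yields a $\tau_1\in(0,a]$ such that $D(\tau,b)>0$ and $D(\tau,-b)<0$ for all $\tau\in(-\tau_1,\tau_1)$. For each such $\tau$, the intermediate value theorem applied to the continuous strictly increasing function $\eta\mapsto D(\tau,\eta)$ on $[-b,b]$ produces a unique $\eta(\tau)\in(-b,b)$ with $D(\tau,\eta(\tau))=0$; monotonicity forces uniqueness inside $U(P_0):=(-\tau_1,\tau_1)\times(-b,b)$, and plugging in $\tau=0$ gives $\eta(0)=0$.

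Finally, to establish continuity of the function $\eta(\tau)$, I would use the standard subsequence argument. Fix $\tau_*\in(-\tau_1,\tau_1)$ and any sequence $\tau_k\to \tau_*$. Since $\eta(\tau_k)\in[-b,b]$, any subsequence admits a further convergent sub-subsequence with some limit $\eta^*\in[-b,b]$; continuity of $D$ gives $D(\tau_*,\eta^*)=0$, and uniqueness of the root on the slice $\tau=\tau_*$ forces $\eta^*=\eta(\tau_*)$. Because every subsequence has a sub-subsequence with the same limit, the whole sequence converges to $\eta(\tau_*)$, proving continuity.

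The only genuinely delicate point is obtaining the uniform lower bound $D_\eta\geq c$ on a closed rectangle strictly contained in $V$; everything else (monotonicity, IVT, subsequence continuity) is mechanical. Since the statement is quoted verbatim as a textbook result from Zorich, in practice I would simply cite the reference rather than reproduce this argument; the sketch above is how a self-contained proof would proceed.
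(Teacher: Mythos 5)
Your argument is correct and is the standard monotonicity-plus-intermediate-value proof of the scalar implicit function theorem; the paper itself gives no proof of this lemma, quoting it directly from the cited reference (Zorich, Section 10.7), where essentially this same argument appears. As you note yourself, citing the reference is the appropriate course here, and your sketch contains no gaps.
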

    
    \begin{customproof}[Proof of Theorem~\ref{thm. existence for ETDRK-PCC}]
    Based on \eqref{scm. Predictor} and \eqref{scm. sub.a Corrector 1}, we define
        \begin{align*}
            D(\tau,\eta) := \mathcal{E}\bigl[\psi\bigr]-\mathcal{E}\bigl[\phi^{n}\bigr],\mbox{ with }
            \psi= \left(I-\eta\Delta\right)^{-1}\big[\Phi_\tau(\phi^n) - \eta\,f\bigl(\Phi_\tau(\phi^n)\bigr)\big].
        \end{align*}
        Note when $\tau=\eta=0$, we have  $\psi= \phi^{n}$ and so $D(0,0)=0$. Meanwhile, we can find $D_{\eta}(0,0)$ with the help of (\ref{fml. D diff}): 
        \begin{align*}
            D_{\eta}(0,0) = -\| -\Delta\phi^{n}+f(\phi^{n}) \|^{2} \le -\tilde\epsilon < 0.
        \end{align*}
        Since both $D(\tau,\eta)$ and $D_{\eta}(\tau,\eta)$ are continuous in a neighborhood of $P_0$,  Lemma~\ref{lem. implicit function} guarantees that for  $\tau \in (0,\tau_{1})$, there exists a unique solution $\eta=\eta(\tau)$ such that $D(\tau,\eta)=0$. Furthermore, the fact that $\eta(\tau)$ is continuous and $\eta(0)=0$, means that when choosing $\tau$ sufficiently small, $\eta^{n+1}$ can be arbitrarily small.
    \end{customproof}

    Next, we analyze to rigorously state that PCC is structure-preserving for (\ref{eq. gradient flow}). To this end, we make the following assumption on the nonlinearity of the model, which is typical for guaranteeing MBP \cite{du2021maximum}.  
    \begin{assumption}\label{ass. f boundary}
        Assume that in (\ref{eq. gradient flow}), the nonlinear function $f\in C^{1}$ and there exists a constant $\beta > 0$ such that
        \begin{align*}
            f(z) \le 0, \quad \text{for } z\leq -\beta; \qquad
            f(z) \ge 0, \quad \text{for } z\geq \beta.
        \end{align*}
    \end{assumption}

    \begin{theorem}[Energy stability and MBP preservation]\label{thm. properties for ETDRK-PCC}
        Under  Assumption~\ref{ass. f boundary}, the numerical solution from the PCC scheme (\ref{scm. Predictor}) - (\ref{scm. Corrector 2}) satisfies the following properties for all time levels $n\ge 0$,
        \begin{align*}
            \|\phi^n\|_{L^{\infty}} \le \beta, \quad \text{and} \quad \mathcal{E}[\phi^{n+1}] \le \mathcal{E}[\phi^n].
        \end{align*}
    \end{theorem}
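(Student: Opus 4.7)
The plan is to proceed by induction on $n$, handling the MBP claim and the energy-dissipation claim in tandem. For the base case I take the standing assumption $\|\phi^{0}\|_{L^{\infty}}\le\beta$ on the initial data. For the inductive step, I suppose $\|\phi^{n}\|_{L^{\infty}}\le\beta$ and take for granted that Theorem~\ref{thm. existence for ETDRK-PCC} provides a well-defined $\phi_{*}^{n+1}$ from Corrector~1, while $\phi^{n+1}$ is unambiguously supplied by the closed-form expression \eqref{fml. compute phi star} in Corrector~2.

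The $L^{\infty}$ bound on $\phi^{n+1}$ is immediate: the three-branch formula \eqref{fml. compute phi star} forces $\phi^{n+1}(\mathbf{x})\in[-\beta,\beta]$ pointwise, irrespective of the predictor or of $\phi_{*}^{n+1}$. This part is purely algebraic and does not use Assumption~\ref{ass. f boundary}.

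For the energy inequality I would split it as
\[
\mathcal{E}[\phi^{n+1}] \;\le\; \mathcal{E}[\phi_{*}^{n+1}] \;\le\; \mathcal{E}[\phi^{n}].
\]
The right inequality is delivered directly by the KKT complementarity \eqref{scm. sub.b Corrector 1} in Corrector~1, so no further work is required there. For the left inequality I would write $\phi^{n+1}=T(\phi_{*}^{n+1})$ with the Lipschitz truncation $T(z):=\max\{-\beta,\min\{\beta,z\}\}$, and bound the Dirichlet and potential parts of $\mathcal{E}$ separately. For the Dirichlet part, the standard Sobolev chain rule gives $\nabla\phi^{n+1}=T'(\phi_{*}^{n+1})\nabla\phi_{*}^{n+1}$ almost everywhere with $T'\in\{0,1\}$, so $|\nabla\phi^{n+1}|\le|\nabla\phi_{*}^{n+1}|$ pointwise and the integrated inequality follows. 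For the potential part I would invoke Assumption~\ref{ass. f boundary}: since $F'=f\le 0$ on $(-\infty,-\beta]$ and $F'=f\ge 0$ on $[\beta,\infty)$, integration shows $F(-\beta)\le F(z)$ for $z<-\beta$ and $F(\beta)\le F(z)$ for $z>\beta$, while $T(z)=z$ on $[-\beta,\beta]$. Hence $F(T(z))\le F(z)$ pointwise for all $z\in\mathbb{R}$, which integrates to the desired bound on the potential term.

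The genuine content lies in the potential-energy bound during Corrector~2, which is exactly where Assumption~\ref{ass. f boundary} is needed; the Dirichlet bound is routine truncation analysis and the Corrector~1 decrease is built into the KKT formulation. I do not expect a substantial obstacle, though I would justify the a.e.\ chain rule on the level sets $\{\phi_{*}^{n+1}=\pm\beta\}$ via Stampacchia's lemma to make the Sobolev-level argument fully rigorous.
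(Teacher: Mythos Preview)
Your proposal is correct and follows essentially the same route as the paper: the MBP is read off directly from the cut-off formula \eqref{fml. compute phi star}, the energy inequality is split as $\mathcal{E}[\phi^{n+1}]\le\mathcal{E}[\phi_{*}^{n+1}]\le\mathcal{E}[\phi^{n}]$ with the right half coming from the KKT condition in Corrector~1, and the left half is obtained by bounding the Dirichlet term via the chain rule for the truncation (the paper cites Gilbarg--Trudinger's Theorem~7.8, which plays exactly the role of your Stampacchia-type argument) and the potential term via the monotonicity of $F$ outside $[-\beta,\beta]$ implied by Assumption~\ref{ass. f boundary}.
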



    As given in \eqref{fml. compute phi star}, a cut-off process is essentially applied in Corrector 2 through the truncation function
    \begin{equation}\label{Phi def}
        \Phi(z)=
        \begin{cases}
            -\beta, & z<-\beta, \\
            z, & z\in[-\beta,\beta], \\
            \beta, & z>\beta,
        \end{cases}
    \end{equation}
    and $\phi^{n+1} = \Phi(\phi_{*}^{n+1})$. The function $\Phi$ is continuous, piecewise smooth, and satisfies
    \begin{equation}
        |\Phi(z)|\le \beta,\quad |\Phi(z_1)-\Phi(z_2)|\leq |z_1-z_2|, \quad \forall\, z_1,z_2\in\mathbb{R}. \label{fml. Phi property}
    \end{equation}
        To prove Theorem~\ref{thm. properties for ETDRK-PCC}, we need the following chain rule for weak derivatives in Sobolev spaces.
    
    \begin{lemma}[\cite{gilbarg1977elliptic}, Section~7.4, Theorem~7.8]\label{lem. chain rule}
        Let $\mathcal{D}$ be some distributional derivative in the Sobolev space $W^{1}(\Omega)$ and let $\Phi:\mathbb{R}\to\mathbb{R}$ be piecewise smooth with $\Phi'\in L^{\infty}(\mathbb{R})$. Then for every $u\in W^{1}(\Omega)$, we have $\Phi(u)\in W^{1}(\Omega)$, and if $\Gamma$ denotes the set of points where $\Phi$ fails to be differentiable, the chain rule is given by
        \begin{align*}
            \mathcal{D}\bigl(\Phi(u)\bigr) =
            \begin{cases}
                \Phi'(u) \, \mathcal{D}u, & \text{if } u(x)\notin\Gamma, \\
                0,              & \text{if } u(x)\in\Gamma.
            \end{cases}
        \end{align*}
    \end{lemma}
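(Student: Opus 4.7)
The plan is to establish the chain rule in two stages: first for $\Phi \in C^1(\mathbb{R})$ with $\Phi' \in L^\infty$, then extend to piecewise smooth $\Phi$ by mollification, with careful treatment of the exceptional set $u^{-1}(\Gamma)$.

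For the $C^1$ case, I invoke Meyers--Serrin density to choose a sequence $\{u_k\} \subset C^\infty(\Omega) \cap W^1(\Omega)$ with $u_k \to u$ in $W^1(\Omega)$, and pass to a subsequence (still denoted $u_k$) so that $u_k \to u$ and $\mathcal{D}u_k \to \mathcal{D}u$ pointwise a.e. The classical chain rule gives $\mathcal{D}(\Phi(u_k)) = \Phi'(u_k)\mathcal{D}u_k$ pointwise. Testing against any $\varphi \in C_c^\infty(\Omega)$, the identity $\int \Phi(u_k)\,\mathcal{D}\varphi = -\int \Phi'(u_k)\mathcal{D}u_k\,\varphi$ can be transferred to the limit: the left-hand side by the Lipschitz bound $|\Phi(u_k) - \Phi(u)| \leq \|\Phi'\|_{\infty} |u_k - u|$, and the right-hand side by the decomposition $\Phi'(u_k)\mathcal{D}u_k = \Phi'(u_k)(\mathcal{D}u_k - \mathcal{D}u) + \Phi'(u_k)\mathcal{D}u$, whose first summand vanishes in $L^2$ thanks to the uniform bound on $\Phi'$, and whose second passes by dominated convergence with majorant $\|\Phi'\|_\infty |\mathcal{D}u|$.

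For piecewise smooth $\Phi$, I mollify: set $\Phi_\delta := \Phi \ast \rho_\delta \in C^\infty(\mathbb{R})$, so that $\|\Phi_\delta'\|_\infty \leq \|\Phi'\|_\infty$, $\Phi_\delta \to \Phi$ uniformly on compacts, and $\Phi_\delta'(z) \to \Phi'(z)$ pointwise for $z \in \mathbb{R}\setminus\Gamma$. Applying the first stage to each $\Phi_\delta$ and testing against $\varphi \in C_c^\infty(\Omega)$ yields $\int \Phi_\delta(u)\mathcal{D}\varphi = -\int \Phi_\delta'(u)\mathcal{D}u\,\varphi$. Uniform convergence handles the left-hand side in the limit $\delta \to 0$, and on the complement of the exceptional set dominated convergence delivers $\Phi'(u)\mathcal{D}u$ on the right.

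The main obstacle, and the only step beyond routine approximation, is the contribution on $E := \{x : u(x) \in \Gamma\}$, where $\Phi_\delta'(u)$ may fail to converge. I would invoke the Stampacchia-type fact that for every $u \in W^1(\Omega)$ and every $c \in \mathbb{R}$, $\mathcal{D}u = 0$ almost everywhere on the level set $\{u = c\}$. Since piecewise smoothness forces $\Gamma$ to be at most countable, a countable union of these null-derivative sets still satisfies $\mathcal{D}u = 0$ a.e.\ on $E$. Therefore $\Phi_\delta'(u)\mathcal{D}u \equiv 0$ a.e.\ on $E$ for every $\delta$, so the right-hand limit vanishes there, matching the second branch of the claimed formula. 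Assembling the two regions gives $\Phi(u) \in W^1(\Omega)$ together with the piecewise chain rule.
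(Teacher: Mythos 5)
The paper offers no proof of this lemma; it is quoted verbatim as a known result from Gilbarg--Trudinger (Theorem~7.8 there), and your argument is correct and is essentially the textbook proof: the $C^{1}$ case by Meyers--Serrin approximation, the piecewise-smooth case by mollifying $\Phi$, and the exceptional set $E=u^{-1}(\Gamma)$ handled via the fact that $\mathcal{D}u=0$ a.e.\ on each level set $\{u=c\}$. The only point worth making explicit is that this last fact is itself usually derived from the chain rule for $(u-c)^{\pm}$, so to avoid circularity you should note that it follows from your stage-one $C^{1}$ result applied to smooth approximations of the truncation (e.g.\ $t\mapsto\sqrt{t^{2}+\varepsilon^{2}}-\varepsilon$), after which the decomposition $\mathcal{D}u=\chi_{\{u>c\}}\mathcal{D}u+\chi_{\{u<c\}}\mathcal{D}u$ forces $\mathcal{D}u=0$ a.e.\ on $\{u=c\}$.
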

    
    \begin{customproof}[Proof of Theorem~\ref{thm. properties for ETDRK-PCC}]
        The first part of the assertion, i.e., $\|\phi^n\|_{L^{\infty}} \le \beta$,  immediately follows from the effect of Corrector 2. Thus, it suffices to prove the energy stability, i.e., 
        $\mathcal{E}[\phi^{n+1}] \le \mathcal{E}[\phi^n].$
        From Corrector 1, we naturally have
        \begin{align}
            \mathcal{E}[\phi_{*}^{n+1}] \leq \mathcal{E}[\phi^{n}]. \label{fml. energy corrector 1}
        \end{align}
        As a consequence of Lemma~\ref{lem. chain rule}, for any $\phi_{*}^{n+1}\in H^1(\Omega)$, the corrected solution satisfies
        \begin{align*}
            \phi^{n+1}=\Phi(\phi_{*}^{n+1}) \in H^1(\Omega), \quad \text{with} \quad \nabla\Phi(\phi_{*}^{n+1}) =
            \begin{cases}
                \nabla \phi_{*}^{n+1}, & \text{if } \phi_{*}^{n+1}\in(-\beta,\beta), \\
                0,        & \text{otherwise}.
            \end{cases}
        \end{align*}
        This observation leads to
        \begin{align}
            \left\| \nabla\phi^{n+1} \right\|^2 = \int_{\Omega} |\nabla\Phi(\phi_{*}^{n+1})|^2\, \mathrm{d}\mathbf{x} \leq \int_{\Omega} |\nabla\phi_{*}^{n+1}|^2\, \mathrm{d}\mathbf{x} = \left\| \nabla\phi_{*}^{n+1} \right\|^2. \label{fml. H1 diminish}
        \end{align}
        Furthermore,  Assumption~\ref{ass. f boundary} indicates the monotonicity of $F(\cdot)$ in $(-\infty,-\beta]$ and $[\beta,\infty)$, and so we have
        \begin{align}
            F\bigl(\Phi(\phi_{*}^{n+1})\bigr) \le F\bigl(\phi_{*}^{n+1}\bigr)
            \quad \Longrightarrow \quad \int_{\Omega} F\bigl(\phi^{n+1}\bigr)\, \mathrm{d}\mathbf{x} \le \int_{\Omega} F\bigl(\phi_{*}^{n+1}\bigr)\, \mathrm{d}\mathbf{x}. \label{fml. F diminish}
        \end{align}
        Combining \eqref{fml. H1 diminish} and \eqref{fml. F diminish} yields
        $\mathcal{E}\bigl[\phi^{n+1}\bigr] \le \mathcal{E}\bigl[\phi_{*}^{n+1}\bigr].$
        Substituting this inequality into \eqref{fml. energy corrector 1} completes the proof of the overall energy stability of the PCC scheme.
    \end{customproof}
    
    \subsection{Alternative way of correction}
    Concerning the splitting technique (\ref{expand:0})-(\ref{expand:2}), the composition of sub-flows may be in different orders. 
    In fact, if we swap the order of Corrector 2 and Corrector 1 in the proposed PCC \eqref{scm. Corrector 1}--\eqref{scm. Corrector 2}, we would end up with a new scheme. It still works in the sense that the energy-stability projection would not break the maximum bound, which we shall show in this subsection. The detailed alternative scheme reads:
    
    \textbf{Step 1 (Predictor):} $\phi^{n,s+1}=\Phi_\tau(\phi^{n})$ for any consistent solver $\Phi_\tau$.
    
    \textbf{Step 2 (Corrector 1 -- maximum-bound projection)}: 
    \begin{subequations}\label{scm. Corrector 1 again}
    	\begin{align}
    		&\phi_{*}^{n+1} - \phi^{n,s+1} = \lambda^{n+1} p'(\phi_{*}^{n+1}), \label{scm. sub.a Corrector 1 again} \\[1mm]
    		&\lambda^{n+1} \ge 0,\quad p(\phi_{*}^{n+1}) \ge 0,\quad \lambda^{n+1} p(\phi_{*}^{n+1}) = 0. \label{scm. sub.b Corrector 1 again}
    	\end{align}
    \end{subequations}
    
    \textbf{Step 3 (Corrector 2 -- energy-stable projection)}: 
    \begin{subequations}\label{scm. Corrector 2 again}
    	\begin{align}
    		&\phi^{n+1} - \phi_{*}^{n+1} = -\eta^{n+1}\Bigl[-\Delta\phi^{n+1} + f\bigl(\phi_{*}^{n+1}\bigr)\Bigr], \label{scm. sub.a Corrector 2 again}\\[1mm]
    		&\eta^{n+1} \ge 0,\quad \mathcal{E}\bigl[\phi^{n+1}\bigr] - \mathcal{E}\bigl[\phi^{n}\bigr] \le 0,\quad \eta^{n+1}\Bigl(\mathcal{E}\bigl[\phi^{n+1}\bigr] - \mathcal{E}\bigl[\phi^{n}\bigr]\Bigr) = 0. \label{scm. sub.b Corrector 2 again}
    	\end{align}
    \end{subequations}
    We will refer to the above scheme \eqref{scm. Corrector 1 again}-\eqref{scm. Corrector 2 again} as \textbf{PCC'}.  Steps 2\&3 above can be computed as previously discussed, and the theoretical counterparts of PCC' hold in parallel as PCC.
    
    \begin{theorem}[Properties of PCC']\label{thm. properties for ETDRK-PCC2}
        If Assumption~\ref{ass. unsteady state}  holds, 
        then there exists a constant $\tau_{2} > 0$ such that for $\tau < \tau_{2}$, the nonlinear system \eqref{scm. Corrector 2 again} in PCC' has a unique solution $\eta^{n+1}$. Moreover, if Assumptions~\ref{ass. f boundary} holds, then the numerical solution of PCC' satisfies: for all time levels $n\ge 0$, 
        \begin{align*}
        	\|\phi^n\|_{L^{\infty}} \le \beta, \quad \text{and} \quad \mathcal{E}[\phi^{n+1}] \le \mathcal{E}[\phi^n].
        \end{align*}
    \end{theorem}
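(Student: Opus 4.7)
The plan is to verify the two assertions separately, mirroring Theorems~\ref{thm. existence for ETDRK-PCC} and \ref{thm. properties for ETDRK-PCC} but with the two correctors swapped, and to handle the one genuinely new issue: whether the energy projection, now applied \emph{after} the cut-off, can break the $L^{\infty}$-bound just established.

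\textbf{Existence and uniqueness of $\eta^{n+1}$.} I would re-apply Lemma~\ref{lem. implicit function} with $\phi_{*}^{n+1}$ (the MBP-projected iterate) substituted for the raw predictor in the definition from Theorem~\ref{thm. existence for ETDRK-PCC}, namely
\begin{align*}
    D(\tau,\eta) := \mathcal{E}[\psi]-\mathcal{E}[\phi^n], \quad \psi := (I-\eta\Delta)^{-1}\bigl[\phi_{*}^{n+1}-\eta\, f(\phi_{*}^{n+1})\bigr].
\end{align*}
Inductively assuming $\|\phi^n\|_{L^{\infty}}\le\beta$, at $\tau=0$ the consistent predictor returns $\phi^n$, which already satisfies the bound, so the cut-off \eqref{scm. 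Corrector 1 again} acts trivially and $\phi_{*}^{n+1}|_{\tau=0}=\phi^n$. Consequently $D(0,0)=0$, and the computation in \eqref{fml. D diff} gives $D_\eta(0,0)=-\|-\Delta\phi^n+f(\phi^n)\|^{2}\le-\tilde\epsilon<0$ by Assumption~\ref{ass. unsteady state}. Continuity of $D$ and $D_\eta$ carries over because the truncation $\Phi$ in \eqref{Phi def} is continuous, so Lemma~\ref{lem. implicit function} yields a unique continuous branch $\eta=\eta(\tau)$ on $(0,\tau_2)$ with $\eta(\tau)\to 0$ as $\tau\to 0$. The energy-dissipation inequality $\mathcal{E}[\phi^{n+1}]\le\mathcal{E}[\phi^n]$ is then automatic from the KKT condition \eqref{scm. sub.b Corrector 2 again}; notice that, unlike in PCC, the energy projection here is indispensable, since the preceding cut-off provides no intrinsic decrease of $\mathcal{E}$.

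\textbf{MBP preservation -- the main obstacle.} The new difficulty is showing $\|\phi^{n+1}\|_{L^{\infty}}\le\beta$ after Corrector~2, since the cut-off is no longer the last step. I would proceed by induction, the base step being trivial. Assuming $\|\phi^{n}\|_{L^{\infty}}\le\beta$, the cut-off guarantees $\|\phi_{*}^{n+1}\|_{L^{\infty}}\le\beta$. Rewriting \eqref{scm. sub.a Corrector 2 again} as
\begin{align*}
    (I-\eta^{n+1}\Delta)\phi^{n+1}=\phi_{*}^{n+1}-\eta^{n+1}\, f(\phi_{*}^{n+1}),
\end{align*}
I would control the right-hand side pointwise and then invoke the maximum principle for $(I-\eta^{n+1}\Delta)^{-1}$. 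Let $L:=\max_{|z|\le\beta}|f'(z)|$, which is finite because $f\in C^{1}$. By the first paragraph, $\tau$ can be shrunk so that $\eta^{n+1}L\le 1$, making $g(z):=z-\eta^{n+1}f(z)$ nondecreasing on $[-\beta,\beta]$; together with Assumption~\ref{ass. f boundary}, which gives $g(\beta)\le\beta$ and $g(-\beta)\ge-\beta$, this shows that $g$ maps $[-\beta,\beta]$ into itself, so $\|\phi_{*}^{n+1}-\eta^{n+1}f(\phi_{*}^{n+1})\|_{L^{\infty}}\le\beta$. The standard weak maximum principle for $(I-\eta^{n+1}\Delta)^{-1}$, which is an $L^{\infty}$-contraction under the admissible boundary conditions, then closes the induction and gives $\|\phi^{n+1}\|_{L^{\infty}}\le\beta$. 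The crux is the coupling between the smallness of $\eta^{n+1}$ produced implicitly by the energy constraint and the pointwise monotonicity condition $\eta^{n+1}L\le 1$ needed for the maximum-principle chain; once this smallness is secured in the first paragraph, the rest is a familiar contraction argument of the type used for ETD-style MBP schemes.
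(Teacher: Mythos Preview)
Your proposal is correct and follows essentially the same approach as the paper: both use the implicit function theorem for solvability (with $\eta^{n+1}\to 0$ as $\tau\to 0$), then exploit the resulting smallness $\eta^{n+1}\|f'\|_{C[-\beta,\beta]}\le 1$ to show that $z\mapsto z-\eta^{n+1}f(z)$ maps $[-\beta,\beta]$ into itself, and finally apply the $L^{\infty}$-contractivity of $(I-\eta^{n+1}\Delta)^{-1}$. The paper packages your inline monotonicity argument as a separate lemma (Lemma~\ref{lem. MBP f}) and states the resolvent bound as Lemma~\ref{lem. MBP Delta}, writing the update as $\bigl(\tfrac{1}{\eta^{n+1}}I-\Delta\bigr)^{-1}\bigl(\tfrac{1}{\eta^{n+1}}\phi_{*}^{n+1}-f(\phi_{*}^{n+1})\bigr)$, but the content is identical to what you outline.
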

    
    The proof needs the following two estimates.
    
    \begin{lemma}[\cite{du2021maximum}]\label{lem. MBP Delta}
        For any $a>0$ and $u \in L^\infty(\Omega)$, we have 
        $$
        \left\| (a I - \Delta)^{-1} u \right\|_{\infty} \leq a^{-1} \| u \|_{\infty},
        $$
        where $I$ is the identity operator.
    \end{lemma}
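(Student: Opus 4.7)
The plan is to prove the $L^\infty$-contraction estimate via the heat semigroup representation of the resolvent, together with the maximum principle for the heat equation. First, I would express the resolvent as the Laplace transform of the heat semigroup,
\begin{equation*}
    (aI-\Delta)^{-1} u \;=\; \int_{0}^{\infty} e^{-at}\,e^{t\Delta}u\,{\rm d}t,
\end{equation*}
where $\{e^{t\Delta}\}_{t\geq 0}$ denotes the semigroup generated by $\Delta$ on $\Omega$ under the prescribed boundary conditions (periodic, homogeneous Neumann, or Dirichlet). The identity is verified by applying $(aI-\Delta)$ under the integral and using $\partial_t(e^{t\Delta}u)=\Delta e^{t\Delta}u$ with $e^{0\Delta}u=u$, plus the exponential decay of the kernel $e^{-at}$ at infinity.

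Second, I would invoke the fact that the heat semigroup is a contraction on $L^{\infty}(\Omega)$, namely $\|e^{t\Delta}u\|_{\infty}\leq \|u\|_{\infty}$ for every $t\geq 0$. This follows from the weak maximum principle applied to $w(\mathbf{x},t)=e^{t\Delta}u$: $w$ satisfies the heat equation, so its pointwise maximum over $[0,t]\times\overline{\Omega}$ is attained at $t=0$ (for Dirichlet boundary the prescribed zero boundary value is absorbed by $\|u\|_\infty$; for Neumann and periodic cases no boundary contribution arises). Combining the two ingredients,
\begin{equation*}
    \bigl\|(aI-\Delta)^{-1}u\bigr\|_{\infty} \;\leq\; \int_{0}^{\infty} e^{-at}\,\|e^{t\Delta}u\|_{\infty}\,{\rm d}t \;\leq\; \|u\|_{\infty}\int_{0}^{\infty} e^{-at}\,{\rm d}t \;=\; \frac{1}{a}\|u\|_{\infty},
\end{equation*}
which is exactly the claim.

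As an alternative route one could argue purely by the elliptic maximum principle: set $v=(aI-\Delta)^{-1}u$, so that $av-\Delta v=u$; at a point $\mathbf{x}_0$ where $v$ attains its global maximum (either interior or a Neumann/periodic-compatible boundary point), $-\Delta v(\mathbf{x}_0)\geq 0$, whence $av(\mathbf{x}_0)\leq u(\mathbf{x}_0)\leq\|u\|_\infty$; the symmetric argument at a global minimum provides the lower bound.

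The main technical obstacle is regularity: for merely $u\in L^{\infty}(\Omega)$, pointwise evaluations of $\Delta v$ are not immediate. I would handle this by a density argument, approximating $u$ by a sequence $u_k\in C^{\infty}(\overline{\Omega})$ with $\|u_k\|_{\infty}\leq\|u\|_{\infty}$, establishing the estimate for the classical solutions $v_k=(aI-\Delta)^{-1}u_k\in C^{2}(\overline{\Omega})$ by either of the two methods above, and then passing to the limit using the continuity of $(aI-\Delta)^{-1}:L^{2}\to L^{2}$ together with the uniform $L^{\infty}$ bound to conclude $\|v\|_{\infty}\leq a^{-1}\|u\|_{\infty}$.
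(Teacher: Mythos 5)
The paper does not prove this lemma itself; it simply cites \cite{du2021maximum}, where the bound is obtained from exactly the semigroup framework you use (the resolvent of a generator of an $L^\infty$-contraction semigroup satisfies $\|(aI-\Delta)^{-1}\|_\infty\le a^{-1}$ via the Laplace-transform representation, i.e.\ a Hille--Yosida-type estimate). Your argument is correct and is essentially the same route, with the elliptic maximum-principle alternative and the $L^2$-approximation step (needed because $C^\infty$ is not $L^\infty$-dense, but a uniform bound plus a.e.\ convergence of a subsequence suffices) handled appropriately.
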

    
    Lemma~\ref{lem. MBP Delta} holds also for the Laplacian operator $\Delta$ under the second-order central finite difference discretization. Its detailed proof can be found in \cite{du2021maximum}.
    
    \begin{lemma}\label{lem. MBP f}
        Under Assumption~\ref{ass. f boundary}, if $\kappa \geq \left\| f'(\cdot) \right\|_{C[-\beta,\beta]}$ holds for some positive constant $\kappa$, then we have $\left| \kappa\xi - f(\xi) \right| \leq \kappa\beta$, for any $\xi \in [-\beta, \beta]$.
    \end{lemma}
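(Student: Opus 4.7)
The proof is short and amounts to showing that the auxiliary function $g(\xi) := \kappa\xi - f(\xi)$ is monotone on $[-\beta,\beta]$ and bounded at the endpoints.

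First I would note that $g\in C^{1}[-\beta,\beta]$ with
\begin{equation*}
g'(\xi) = \kappa - f'(\xi) \ge \kappa - \|f'(\cdot)\|_{C[-\beta,\beta]} \ge 0, \qquad \xi\in[-\beta,\beta],
\end{equation*}
so $g$ is non-decreasing. Consequently $g(-\beta)\le g(\xi)\le g(\beta)$ for all $\xi\in[-\beta,\beta]$, and it is enough to control the two endpoint values.

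Next I would use Assumption~\ref{ass. f boundary}, together with the continuity of $f$, to infer $f(-\beta)\le 0$ and $f(\beta)\ge 0$. This immediately gives
\begin{equation*}
g(\beta) = \kappa\beta - f(\beta) \le \kappa\beta, \qquad
g(-\beta) = -\kappa\beta - f(-\beta) \ge -\kappa\beta,
\end{equation*}
so $-\kappa\beta \le g(\xi) \le \kappa\beta$ for every $\xi\in[-\beta,\beta]$, which is exactly $|\kappa\xi - f(\xi)|\le \kappa\beta$.

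There is no real obstacle: the argument is a one-variable monotonicity check, and the only subtle point is recognizing that Assumption~\ref{ass. f boundary} pins down the correct signs of $f$ precisely at $\xi=\pm\beta$, which is what converts the endpoint inequalities into the desired two-sided bound.
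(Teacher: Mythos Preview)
Your proof is correct and follows essentially the same approach as the paper: define the auxiliary function $\kappa\xi - f(\xi)$, show it is non-decreasing on $[-\beta,\beta]$ via the bound on $f'$, and then use Assumption~\ref{ass. f boundary} to control the endpoint values. The paper's argument is identical up to notation (it names the function $N$ rather than $g$ and additionally records the unused upper bound $N'(\xi)\le 2\kappa$).
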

    \begin{proof}
        Set $N(\xi) = \kappa\xi - f(\xi)$, we have
        \begin{align*}
            N'(\xi) = \kappa - f'(\xi) \Rightarrow 0 \leq N'(\xi) \leq 2\kappa,
        \end{align*}
        which means that
        \begin{align*}
            -\kappa\beta \leq -\kappa\beta - f(-\beta) = N(-\beta) \leq N(\xi) \leq N(\beta) = \kappa\beta - f(\beta) \leq \kappa\beta.
        \end{align*}
        This immediately leads to the assertion of Lemma \ref{lem. MBP f}.
    \end{proof}
    
    \begin{customproof}[Proof of Theorem~\ref{thm. properties for ETDRK-PCC2}]
        The existence and uniqueness proof of $\eta^{n+1}$ is fundamentally similar to the proof of Theorem~\ref{thm. existence for ETDRK-PCC}, and when $\tau$ is sufficiently small, $\eta^{n+1}$ can also be arbitrarily small. Therefore, we can conclude that under an appropriate setting of $\tau_{2}$, we have \begin{align}
        	\frac{1}{\eta^{n+1}} \geq \left\| f'(\cdot) \right\|_{C[-\beta,\beta]}. \label{fml. assumption eta}
       	\end{align}
        
        The energy dissipation, $\mathcal{E}[\phi^{n+1}] \le \mathcal{E}[\phi^n]$, is also automatically satisfied after the Corrector 2 of PCC' in \eqref{scm. Corrector 2 again}. So here, we only need to prove that the energy projection maintains the maximum bound after Corrector 1. We can rewrite (\ref{scm. Corrector 2 again}) as
        \begin{align}
            \phi^{n+1} = \Bigl( \frac{1}{\eta^{n+1}}I - \Delta \Bigr)^{-1}\Bigl( \frac{1}{\eta^{n+1}}\phi_{*}^{n+1} - f(\phi_{*}^{n+1}) \Bigr). \label{fml. phi n+1 new}
        \end{align}
        From Lemma~\ref{lem. MBP Delta}, we have
        \begin{align*}
            \left\| \Bigl( \frac{1}{\eta^{n+1}}I -\Delta \Bigr)^{-1} \right\|_{\infty} \leq \eta^{n+1}.
        \end{align*}
        According to Lemma \ref{lem. MBP f} with the condition \eqref{fml. assumption eta}, and the fact $\left\| \phi_{*}^{n+1} \right\|_{\infty} \leq \beta$, we obtain
        \begin{align*}
            \left\| \frac{1}{\eta^{n+1}}\phi_{*}^{n+1} - f(\phi_{*}^{n+1}) \right\|_{\infty} \leq \frac{\beta}{\eta^{n+1}}.
        \end{align*}
        Therefore, from (\ref{fml. phi n+1 new}), we conclude that $\left\| \phi^{n+1} \right\|_{\infty} \leq \beta$, which finishes the proof.
    \end{customproof}
    
    \section{An efficient implementation} \label{sec. 3}
    The PCC framework (\ref{scm. Predictor})-(\ref{scm. Corrector 2}) involves a nonlinear system, which poses a burden for practical computing and challenges convergence analysis. 
    In this section, we provide an efficient implementation by imposing energy stability into the predictor.
    
    \subsection{The ETDRK-PC scheme}\label{sec. 3.1}
    We consider the exponential time differencing Runge–Kutta (ETDRK) method \cite{du2021maximum, fu2024higher} as our predictor. Let us provide a brief introduction of the method. Consider a natural convex splitting of the energy:
    $
    \mathcal{E}[\phi] = \mathcal{E}_{lin}[\phi] - \mathcal{E}_{non}[\phi],
    $
    with
    $$
    \mathcal{E}_{lin}[\phi] = \frac{1}{2}\|\nabla\phi\|^2 + \frac{S}{2}\|\phi\|^2,\qquad 
    \mathcal{E}_{non}[\phi] = -\int_{\Omega}F(\phi)\,{\rm d}\mathbf{x} + \frac{S}{2}\|\phi\|^2,
    $$
   and the gradient flow can be written as
    \begin{equation}
        \frac{\partial\phi}{\partial t} = -\mathcal{G}\mu,\qquad \mu = L\phi - g(\phi),\qquad t > 0,\;\mathbf{x}\in\Omega, \label{eq. gradient flow split}
    \end{equation}
    with
    $L = -\Delta + SI, g(\phi) = -f(\phi) + S\phi$.	Here, $S > 0$ is a stabilizing parameter that enhances numerical stability. 
     Based on the Duhamel formula of \eqref{eq. gradient flow split}, 
    \begin{align*}
        \phi(t_n+\tau) = \fe^{-\tau \mathcal{G}L}\phi(t_n) + \int_{0}^{\tau} \fe^{-(\tau-s)\mathcal{G}L}\mathcal{G}g\bigl(\phi(t_n+s)\bigr){\rm d}s,
    \end{align*}
 the ETDRK method looks for  suitable quadrature formulas to approximate the integral. For example, a straightforward approximation $g\bigl(\phi(t_n+s)\bigr)\approx g(\phi(t_n))$ leads to the first-order scheme (ETDRK1):
    \begin{align}
        \phi^{n+1} = \fe^{-\tau \mathcal{G}L}\phi^n + \left(I-\fe^{-\tau \mathcal{G}L}\right)L^{-1}g(\phi^n). \label{scm. ETD1}
    \end{align}
   An $s$-order scheme  ($s\in\mathbb{N}_+$) is obtained by introducing stage values $\phi^{n,i} \approx \phi(t_n+c_i\tau)$ with 
    $
    c_1 = 0,\  c_i \in (0,1],\  i=2,\dots,s,\  c_{s+1} = 1,
    $
    and proceeding in time as 
    \begin{subequations}\label{fml. ETDRK original}
        \begin{align}
            &\phi^{n,1} = \phi^n, \ 
            \phi^{n,i} = \chi_i(-\tau \mathcal{G}L)\phi^n + \tau\sum_{j=1}^{i-1}a_{i,j}(-\tau \mathcal{G}L)\mathcal{G}g\bigl(\phi^{n,j}\bigr),\ i=2,\ldots,s, \\
           &\phi^{n+1}= \phi^{n,s+1} = \chi(-\tau \mathcal{G}L)\phi^n + \tau\sum_{j=1}^{s}b_{j}(-\tau \mathcal{G}L)\mathcal{G}g\bigl(\phi^{n,j}\bigr).
        \end{align}
    \end{subequations}
    Here $\chi(z) = \mathrm{e}^{z}$, $\chi_i(z) = \chi(c_{i}z)$, and the coefficients $a_{i,j},\,b_j$ are  linear combinations of the exponential functions
    $
    \varphi_{0}(z)=\mathrm{e}^z,\  \varphi_{k+1}(z)=[\varphi_{k}(z)-1/k!]/{z},\  k\ge0,
    $
    which in order to have consistency, must satisfy
    \begin{equation}
        \sum_{j=1}^{s} a_{i,j}(z) = \frac{\chi_i(z)-1}{z},\quad \sum_{j=1}^{s} b_j(z) = \frac{\chi(z)-1}{z}. \label{eq. RK equilibria}
    \end{equation}
    In short, the ETDRK schemes can be represented by a Butcher tableau:
    $$
    \begin{array}{c|cccc|c}
        c_{1}  &   &   &   &   & \chi_{1}(z) \\
        c_{2}  & a_{2,1} &   &   &   & \chi_{2}(z)   \\
        \vdots & \vdots & \vdots &   &   & \vdots   \\
        c_{s}  & a_{s,1} & \cdots & a_{s,s-1}  &   & \chi_{s}(z)   \\
        \hline
        & b_{1} & b_{2} & \cdots & b_{s} & \chi(z)
    \end{array}
    $$
    
    Generally speaking, ETDRK does not guarantee energy stability or MBP. 
    The main reasons for us to adopt ETDRK in the prediction step include its high computational efficiency and the possibility of having energy stability by properly constructing the Butcher tableau.  A sufficient condition for energy stability has been given in \cite{fu2024higher} and is summarized as follows.

    \begin{assumption}\label{ass. A}
        Let $E_{L}=(1_{i\ge j})_{s\times s}$ denote an $s\times s$ lower triangular matrix with all nonzero entries equal to $1$, and define the matrix function
        \small
        \begin{align*}
            A(z)=\begin{pmatrix}
                a_{2,1} &         &        &          \\
                a_{3,1} & a_{3,2} &        &          \\
                \vdots  & \vdots  & \ddots &          \\
                a_{s,1} & a_{s,2} & \cdots & a_{s,s-1} \\[1mm]
                b_{1}  & b_{2}   & \cdots & b_{s}
            \end{pmatrix}.
        \end{align*}
        \normalsize
       Assume that the coefficients $a_{i,j},\,b_j$ of ETDRK make 
            $P(z)=zE_{L}+(A(z))^{-1}E_{L}-\frac{z}{2}I$
         positive-definite for all $z<0$.
    \end{assumption}
    
    So far, energy-stable ETDRK schemes that comply with Assumption \ref{ass. A} have been constructed up to third-order accuracy ($s=3$) \cite{fu2024higher}.       Detailed ETDRK schemes that satisfy Assumption~\ref{ass. A} will be given and used in Section \ref{sec. 5}.  Together with the following technical requirement, the rigorous energy stability of ETDRK is stated in Lemma \ref{lem. energy stability of ETDRK}.

    \begin{assumption}\label{ass. f Lipschitz}
        Assume that the nonlinear function $f$ is Lipschitz continuous with Lipschitz constant $C_L$, that is, $\|f(u)-f(v)\| \le C_L\|u-v\|,\  \forall\, u,v\in L^2(\Omega).$ 
    \end{assumption}
    \begin{remark}
       For real applications, the Lipschitz continuity of $f$ may be too strong. Instead, $f\in C^1(\mathbb{R})$ together with the MBP of (\ref{eq. gradient flow}) would be enough.    
    \end{remark}
    \begin{lemma}[\cite{fu2024higher}] \label{lem. energy stability of ETDRK} 
        Suppose that $\mathcal{G}$ is nonnegative, Assumptions~\ref{ass. A} and \ref{ass. f Lipschitz} hold, and the chosen stabilizing parameter $S\ge C_L$. Then, the energy of ETDRK is unconditionally dissipative: $\mathcal{E}[\phi^{n,s+1}]\le \mathcal{E}[\phi^n]$ for all $n\ge0$, with  $\phi^{n,s+1}$  from (\ref{fml. ETDRK original}).
    \end{lemma}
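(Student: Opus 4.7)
The plan is to reduce the energy-dissipation bound to the matrix positivity postulated in Assumption~\ref{ass. A}. I would begin with two convex-splitting building blocks. The linear part $\mathcal{E}_{lin}[\phi]=\frac{1}{2}\langle L\phi,\phi\rangle$ with $L=-\Delta+SI\succeq 0$ is a convex quadratic, giving the exact midpoint identity $\mathcal{E}_{lin}[u]-\mathcal{E}_{lin}[v]=\frac{1}{2}\langle L(u+v),u-v\rangle$. For the nonlinear part, the hypotheses $S\ge C_L$ and $\|f(u)-f(v)\|\le C_L\|u-v\|$ imply the monotonicity estimate $\langle g(u)-g(v),u-v\rangle\ge (S-C_L)\|u-v\|^2\ge 0$, so $\mathcal{E}_{non}$ is convex with gradient $g=-f+S\,\mathrm{id}$. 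Combined, with $\mu[\phi]=L\phi-g(\phi)$, these yield a master increment inequality bounding $\mathcal{E}[\phi^{n,s+1}]-\mathcal{E}[\phi^n]$ from above by a linear pairing of $\mu$-values at the stages $\{\phi^{n,j}\}$ against the stage increments, modulo an $O(\|\phi^{n,s+1}-\phi^n\|^2)$ remainder controlled by the $L$-norm and the coercivity gap $S-C_L$.

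Next I would unpack the ETDRK recursion \eqref{fml. ETDRK original} using the consistency identities \eqref{eq. RK equilibria}. Setting $z=-\tau\mathcal{G}L$ (under the standard assumption that $\mathcal{G}$ and $L$ commute), the identities $\sum_{j<i}a_{i,j}(z)=(\chi_i(z)-1)/z$ and $\sum_j b_j(z)=(\chi(z)-1)/z$ let one rewrite every stage difference in the compact form $\phi^{n,i+1}-\phi^{n,i}=-\tau\,\mathcal{G}\sum_j M_{ij}(z)\,r_j$, where $r_j:=L\phi^n-g(\phi^{n,j})$ is the $j$-th stage residual and $M(z)$ is an $s\times s$ rational-matrix function built from the ETDRK matrix $A(z)$ and the lower-triangular mask $E_L$. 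In particular one obtains $\phi^{n,s+1}-\phi^n=-\tau\sum_j b_j(z)\,\mathcal{G}r_j$.

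Substituting these expressions into the master inequality and regrouping collects everything into a single quadratic form in $\mathbf{r}=(r_j)_{j=1}^{s}$. The contribution from the $\frac{1}{2}\langle L(u+v),u-v\rangle$ part supplies $zE_L-\frac{z}{2}I$, inverting the stage-to-residual relation supplies $(A(z))^{-1}E_L$, and together they reproduce exactly the matrix $P(z)$ of Assumption~\ref{ass. A}. The resulting estimate takes the form
\begin{align*}
\mathcal{E}[\phi^{n,s+1}]-\mathcal{E}[\phi^n]\le -\tau\,\langle\mathbf{r},\,P(z)\,\mathbf{r}\rangle_{\mathcal{G}},
\end{align*}
where $\langle\cdot,\cdot\rangle_{\mathcal{G}}$ denotes the inner product induced by the symmetric nonnegative operator $\mathcal{G}$. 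Since $\mathcal{G}L\succeq 0$, the spectrum of $z$ lies in $(-\infty,0]$, precisely the range on which Assumption~\ref{ass. A} gives $P(z)\succeq 0$; together with $\mathcal{G}\succeq 0$, the right-hand side is nonpositive, yielding $\mathcal{E}[\phi^{n,s+1}]\le\mathcal{E}[\phi^n]$.

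The main obstacle is this third step: verifying that the algebraic regrouping really collects into $P(z)$ without a residual term left over. It requires careful tracking of the rational matrix functions in $z$, and it is exactly where the consistency identities \eqref{eq. RK equilibria} and the coercivity $S\ge C_L$ play their combined role—the first to generate the $(A(z))^{-1}E_L$ piece cleanly, the second to absorb any leftover $\|\phi^{n,s+1}-\phi^n\|^2$-type remainder coming from the nonlinear convexity bound. Once these pieces fit together, the conclusion is immediate from $P(z)\succeq 0$ on the spectrum of $z$.
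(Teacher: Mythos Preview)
The paper does not actually prove this lemma: it is stated with a citation to \cite{fu2024higher} and no argument is given. There is therefore no ``paper's own proof'' to compare your proposal against.

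That said, your sketch follows the expected architecture of the result in \cite{fu2024higher}: use the convex splitting $\mathcal{E}=\mathcal{E}_{lin}-\mathcal{E}_{non}$, exploit $S\ge C_L$ to make $\mathcal{E}_{non}$ convex, rewrite the stage increments via the consistency relations \eqref{eq. RK equilibria}, and collect the energy difference into a quadratic form governed by $P(z)$. This is the right outline. Your own assessment of the weak point is accurate: the algebraic bookkeeping that produces exactly $zE_L+(A(z))^{-1}E_L-\tfrac{z}{2}I$ from the stage recursion is where all the work sits, and your sketch waves a hand at it rather than carrying it out. In particular, the step ``inverting the stage-to-residual relation supplies $(A(z))^{-1}E_L$'' hides a nontrivial identity relating cumulative stage differences to the ETDRK coefficient matrix, and the claim that the nonlinear convexity remainder is fully absorbed by the $S-C_L$ gap needs to be made precise (in the actual argument the convexity of $\mathcal{E}_{non}$ is used as a one-sided tangent-line inequality at each stage, not just at $\phi^{n,s+1}$ versus $\phi^n$). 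If you want a self-contained proof, those two points are what you would need to write out in full; the rest of your plan is sound.
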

    
    Thanks to the energy stable ETDRK, we can omit Step 2 of PCC, which simplifies the process into the following \textbf{ETDRK-PC} scheme:
    
    \textbf{Step 1 (Predictor):} Compute the prediction $\phi^{n,s+1}$ via an energy-stable and consistent ETDRK scheme (\eqref{fml. ETDRK original} with \eqref{eq. RK equilibria}):
    \begin{subequations}\label{scm. ETDRK Predictor simplified}
        \begin{align}
            &\phi^{n,1} = \phi^n, \ 
            \phi^{n,i} = \phi^n + \tau\sum_{j=1}^{i-1}a_{i,j}(-\tau \mathcal{G}L)
            \Bigl[\mathcal{G}g\bigl(\phi^{n,j}\bigr) - \mathcal{G}L\phi^n\Bigr],\quad i=2,\ldots,s, \\
            &\phi^{n,s+1} = \phi^n + \tau\sum_{j=1}^{s}b_{j}(-\tau \mathcal{G}L)
            \Bigl[\mathcal{G}g\bigl(\phi^{n,j}\bigr) - \mathcal{G}L\phi^n\Bigr],
        \end{align}
    \end{subequations}
    where $a_{i,j},b_j$ satisfy Assumption \ref{ass. A}.
    
    \textbf{Step 2 (Corrector -- maximum-bound projection)}: 
    \begin{subequations}\label{scm. Corrector 1 simplified}
        \begin{align}
            &\phi^{n+1} - \phi^{n,s+1} = \lambda^{n+1} \, p'(\phi^{n+1}), \label{scm. sub.a Corrector 1 simplified} \\[1mm]
            &\lambda^{n+1} \ge 0,\quad p(\phi^{n+1}) \ge 0,\quad \lambda^{n+1} p(\phi^{n+1}) = 0. \label{scm. sub.b Corrector 1 simplified}
        \end{align}
    \end{subequations}
  
    \begin{theorem}[Energy stability and maximum-bound preservation]\label{thm. energy stability}
        The numerical solution $\phi^{n}$ generated by the ETDRK-PC scheme \eqref{scm. ETDRK Predictor simplified}--\eqref{scm. Corrector 1 simplified} satisfies
        \begin{align*}
            \|\phi^{n}\|_{L^{\infty}}\le \beta,\quad \text{for all}\ n\ge0.
        \end{align*}
        Moreover, under Assumptions~\ref{ass. f boundary} and the conditions of Lemma \ref{lem. energy stability of ETDRK}, the energy is unconditionally dissipative:
        \begin{align*}
            \mathcal{E}[\phi^{n+1}]\le \mathcal{E}[\phi^n],\quad \text{for all}\ n\ge0.
        \end{align*}
    \end{theorem}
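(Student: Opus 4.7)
The plan is to reduce this theorem to a direct combination of two facts that have already been assembled in the excerpt: (i) the ETDRK predictor itself is unconditionally energy dissipative under the stated hypotheses (Lemma \ref{lem. energy stability of ETDRK}), and (ii) the cut-off corrector preserves $L^\infty$-bounds while also not increasing the energy (essentially the second half of the proof of Theorem \ref{thm. properties for ETDRK-PCC}). Because the first Corrector of PCC has been dropped, the only new content is observing that we can compose these two facts cleanly.

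I would proceed by induction on $n$, with $\|\phi^0\|_{L^\infty}\le\beta$ as the base case. For the MBP assertion, it suffices to note that the Corrector step \eqref{scm. Corrector 1 simplified} is exactly the cut-off \eqref{fml. compute phi star} applied to the predictor output $\phi^{n,s+1}$ (with $\phi^{n,s+1}$ playing the role of $\phi_*^{n+1}$). Hence $\phi^{n+1}=\Phi(\phi^{n,s+1})$ in the notation of \eqref{Phi def}, and so $\|\phi^{n+1}\|_{L^\infty}\le\beta$ is immediate from $|\Phi(z)|\le\beta$ in \eqref{fml. Phi property}. No further analysis of the predictor is required for this part.

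For energy dissipation, I would first invoke Lemma \ref{lem. energy stability of ETDRK}: since Assumption \ref{ass. A} holds, $\mathcal{G}$ is nonnegative, $f$ is Lipschitz (implied by $f\in C^1$ together with the induction hypothesis $\|\phi^n\|_{L^\infty}\le\beta$ — or directly by Assumption \ref{ass. f Lipschitz}), and $S\ge C_L$, the predictor satisfies
\begin{equation*}
\mathcal{E}[\phi^{n,s+1}]\le \mathcal{E}[\phi^n].
\end{equation*}
Then I would copy the chain-rule argument from the proof of Theorem \ref{thm. properties for ETDRK-PCC}: applying Lemma \ref{lem. chain rule} to the piecewise-smooth $\Phi$ yields $\nabla\Phi(\phi^{n,s+1})=\nabla\phi^{n,s+1}$ where $|\phi^{n,s+1}|<\beta$ and vanishes elsewhere, giving $\|\nabla\phi^{n+1}\|\le\|\nabla\phi^{n,s+1}\|$; and Assumption \ref{ass. f boundary} (which makes $F$ nondecreasing on $[\beta,\infty)$ and nonincreasing on $(-\infty,-\beta]$) gives $F(\Phi(z))\le F(z)$ pointwise, hence $\int_\Omega F(\phi^{n+1})\,\mathrm{d}\mathbf{x}\le \int_\Omega F(\phi^{n,s+1})\,\mathrm{d}\mathbf{x}$. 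Adding the two inequalities produces $\mathcal{E}[\phi^{n+1}]\le\mathcal{E}[\phi^{n,s+1}]$, which chained with the predictor estimate closes the induction.

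No step is genuinely hard; the only thing to be careful about is the logical interplay of the hypotheses invoked at each stage. In particular, Lemma \ref{lem. energy stability of ETDRK} is quoted under Assumption \ref{ass. f Lipschitz}, whereas Theorem \ref{thm. energy stability} only cites Assumption \ref{ass. f boundary} together with the conditions of Lemma \ref{lem. energy stability of ETDRK}; this is consistent since those conditions already include the Lipschitz hypothesis (or, via the Remark following Assumption \ref{ass. f Lipschitz}, $f\in C^1$ combined with the MBP just established is sufficient because $\phi^{n}$ and all stage values can be taken in the bounded set where $f'$ is uniformly bounded). Apart from pointing out this compatibility explicitly, the proof is essentially two lines built on already-proved statements.
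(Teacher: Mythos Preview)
Your proposal is correct and matches the paper's intent exactly: the paper omits the proof, stating only that it ``goes similarly as Theorem~\ref{thm. properties for ETDRK-PCC},'' which is precisely the route you describe---invoke Lemma~\ref{lem. energy stability of ETDRK} for the predictor's energy dissipation, then repeat the chain-rule/cut-off argument from the proof of Theorem~\ref{thm. properties for ETDRK-PCC} to show the corrector preserves both MBP and energy stability. Your remark on the hypothesis compatibility (Assumption~\ref{ass. f Lipschitz} being folded into ``the conditions of Lemma~\ref{lem. energy stability of ETDRK}'') is a useful clarification the paper leaves implicit.
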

    
    The proof of Theorem~\ref{thm. energy stability} goes similarly as  Theorem~\ref{thm. properties for ETDRK-PCC}, and is therefore omitted.
    
    \subsection{Convergence analysis}
    Now, we provide a convergence analysis for the ETDRK-PC scheme.
    For simplicity, we shall only consider the case  $\mathcal{G} = I$ (i.e., the Allen-Cahn equation) with periodic or Dirichlet boundary conditions.

    
    
    
    \begin{assumption}\label{ass. phi}
        Assume that the gradient flow \eqref{eq. gradient flow} admits a sufficiently regular solution $\phi$ with the required time and space derivatives, and $f$ is repeatedly Fr\'echet differentiable in a strip along the solution. Furthermore, all relevant derivatives are uniformly bounded.
    \end{assumption}
    
    For high-order accurate methods, certain order conditions must be met. Table~\ref{tab. order condition} lists the required conditions for orders from one to three, where 
    \begin{align*}
        \psi_{j}(z) &= \varphi_{j}(z)-\sum_{k=1}^{s}b_{k}(z)\frac{c_{k}^{j-1}}{(j-1)!},\ \ 
        \psi_{j,i}(z) = c_{i}^{j}\varphi_{j}(-c_{i}z)-\sum_{k=1}^{i-1}a_{i,k}(z)\frac{c_{k}^{j-1}}{(j-1)!},
    \end{align*}
    and $J$ denotes arbitrary bounded operator on $X$. We are now ready to state the convergence result. 
    
    \begin{table}[!ht]
        \caption{Order conditions for the ETDRK methods.}
        \renewcommand{\arraystretch}{1.5}
        \centering
        \begin{tabular}{|c|>{\centering\arraybackslash}m{10cm}|}
            \hline
            Order & Order Condition \\ \hline
            1 & \begin{minipage}[c]{\linewidth}
            		$$\psi_{1}(-\tau L)=0,$$
            \end{minipage} \\ \hline
            2 & \begin{minipage}[c]{\linewidth}
            	$$\psi_{2}(-\tau L)=0,\ \psi_{1,2}(-\tau L)=0$$
            \end{minipage} \\ \hline
            3 & \begin{minipage}[c]{\linewidth}
                $$\psi_{3}(0)=0,\ \psi_{1,3}(-\tau L)=0, \  \sum_{i=2}^{3} b_{i}(0)\, J\, \psi_{2,i}(-\tau L)=0.$$
            \end{minipage} \\ \hline
        \end{tabular}
        \label{tab. order condition}
    \end{table}

    \begin{theorem}[Error Bound]\label{thm. convergency}
        Assume that the gradient flow \eqref{eq. gradient flow} (with $\mathcal{G} = I$) satisfies Assumptions~\ref{ass. f Lipschitz} and~\ref{ass. phi}. Let $\phi^{n+1}$ denote the numerical solution obtained via the ETDRK-PC scheme \eqref{scm. ETDRK Predictor simplified}--\eqref{scm. Corrector 1 simplified}. Moreover, for any $1 \leq s \leq 3$, assume that the order conditions in Table~\ref{tab. order condition} up to order $s$ are satisfied. Then there exists a constant $\tau_{3}>0$, independent of the discrete time level $n$, such that for any time step $0 < \tau \leq \tau_{3}$ the following error bound holds:
        $$\|\phi^{n}-\phi(t_{n})\| \le C\,\tau^{s},\quad 0\le n\,\tau\le T,$$
        where the constant $C>0$ depends solely on $T$ and is independent of $n$ or $\tau$.
    \end{theorem}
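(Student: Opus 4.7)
The overall strategy is to reduce the convergence analysis of ETDRK-PC to that of the pure ETDRK predictor, exploiting the non-expansiveness of the cut-off operator $\Phi$ defined in \eqref{Phi def}. Since $\mathcal{G}=I$ and Assumption \ref{ass. phi} implies that the exact solution $\phi(t_{n})$ satisfies the continuous MBP, i.e., $\|\phi(t_{n})\|_{L^{\infty}}\le\beta$, we have the identity $\phi(t_{n+1}) = \Phi(\phi(t_{n+1}))$. Combined with the 1-Lipschitz property \eqref{fml. Phi property}, which lifts pointwise to $L^{2}(\Omega)$ by integration, the corrector step yields
\begin{equation*}
\|\phi^{n+1}-\phi(t_{n+1})\| = \|\Phi(\phi^{n,s+1}) - \Phi(\phi(t_{n+1}))\| \le \|\phi^{n,s+1}-\phi(t_{n+1})\|.
\end{equation*}
Thus the cut-off projection never amplifies the error, and it suffices to estimate $\|\phi^{n,s+1}-\phi(t_{n+1})\|$ produced by the ETDRK predictor alone.

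The next step is a standard Lady Windermere's fan argument for ETDRK methods. First I would write down the variation-of-constants (Duhamel) formula for both the exact solution $\phi(t_{n}+c_{i}\tau)$ on $[t_{n},t_{n+1}]$ and the stage values $\phi^{n,i}$ from \eqref{scm. ETDRK Predictor simplified}. Taylor-expanding $g(\phi(t_{n}+s))$ around $s=0$ and matching against the stage approximations, the order conditions in Table~\ref{tab. order condition} (expressed through $\psi_{j}$ and $\psi_{j,i}$) cause all terms up to order $\tau^{s}$ to cancel inside the defect. This produces a local truncation error $\delta^{n+1}$ with $\|\delta^{n+1}\| \le C\tau^{s+1}$, where $C$ depends on the uniform bounds on $\phi$ and its derivatives given by Assumption~\ref{ass. phi}.

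Second, I would establish a one-step stability bound for the ETDRK map. Setting $e^{n}:=\phi^{n}-\phi(t_{n})$ and subtracting the two schemes, the difference of stage values satisfies a linear recursion whose kernel is bounded (uniformly in $\tau$) by the contractivity $\|\mathrm{e}^{-\tau L}\|\le 1$ and by bounded multiplication by the $\varphi_{k}(-\tau L)$ operators. Using Assumption~\ref{ass. f Lipschitz} on $f$ (hence on $g$) together with stage-by-stage induction, one obtains
\begin{equation*}
\|\phi^{n,s+1}-\phi(t_{n+1})\| \le (1 + C_{1}\tau)\|e^{n}\| + \|\delta^{n+1}\|.
\end{equation*}
Combining with the non-expansiveness of the corrector gives $\|e^{n+1}\| \le (1+C_{1}\tau)\|e^{n}\| + C\tau^{s+1}$. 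A discrete Gronwall argument over $0\le n\tau\le T$ with $e^{0}=0$ then yields $\|e^{n}\|\le C\tau^{s}$ with a constant depending only on $T$, $C_{1}$, and the bounds from Assumption~\ref{ass. phi}.

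The main obstacle is the local truncation error estimate: verifying rigorously that the order conditions in Table~\ref{tab. order condition} (particularly the third-order condition involving the nonlinear operator $J$, which has to be interpreted as a Fr\'echet derivative of $g$ along the solution trajectory) fully annihilate the Taylor defect of the composed ETDRK stage formulas. This requires careful bookkeeping of terms of the form $\varphi_{k}(-\tau L)$ acting on derivatives of $\phi$ and of $g\circ\phi$, together with the identities $\sum_{j}a_{i,j}(z)=(\chi_{i}(z)-1)/z$ and $\sum_{j}b_{j}(z)=(\chi(z)-1)/z$ from \eqref{eq. RK equilibria}. Once the order conditions are shown to make the $O(\tau^{k})$ coefficient vanish for each $k\le s$, the remainder is controlled uniformly by Assumption~\ref{ass. phi} and the rest of the argument is routine. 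A secondary technical point is that the constant $\tau_{3}$ must be chosen small enough so that the discrete Gronwall factor $\mathrm{e}^{C_{1}T}$ does not explode; since $C_{1}$ is independent of $\tau$, any $\tau_{3}\le 1/C_{1}$ will do.
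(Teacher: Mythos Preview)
Your handling of the corrector step is correct and actually simpler than the paper's. You use that $\Phi$ is $1$-Lipschitz and that the exact solution satisfies $\phi(t_{n+1})=\Phi(\phi(t_{n+1}))$ to obtain $\|E^{n+1}\|\le\|E^{n,s+1}\|$ directly. The paper instead returns to the KKT formulation $\phi^{n+1}-\phi^{n,s+1}=\lambda^{n+1}p'(\phi^{n+1})$, squares the resulting error identity, and uses a second-order Taylor expansion of $p$ together with the complementarity condition $\lambda^{n+1}p(\phi^{n+1})=0$ and the sign conditions $\lambda^{n+1}\ge0$, $p(\phi(t_{n+1}))\ge0$ to reach the same inequality. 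Both arguments implicitly need the continuous MBP $|\phi(t_{n+1})|\le\beta$; yours invokes it explicitly, the paper's through $p(\phi(t_{n+1}))\ge0$. Your route is cleaner and buys you nothing less.

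For the predictor, however, you are missing an ingredient that the paper makes essential use of: a uniform $L^{\infty}$ bound on the intermediate stage values $\phi^{n,i}$. The paper obtains this by combining the MBP of $\phi^{n}$ (guaranteed by the corrector at the previous step) with the $L^{\infty}$-contractivity of $\mathrm{e}^{\tau\Delta}$ (maximum principle for the heat semigroup), yielding $\|\phi^{n,i}\|_{L^{\infty}}\le\beta+1$ provided $2\tau_{3}M\le1$. This is where the constant $\tau_{3}$ in the statement actually originates, not from a Gronwall constraint as you suggest. The bound matters for $s=3$: Assumption~\ref{ass. phi} only controls the \emph{second} Fr\'echet derivative of $g$ in a strip along the exact solution, so the quadratic remainder $Q_{n,i}(E^{n,i})$ in the expansion $g(\phi^{n,i})-g(\phi(t_{n,i}))=\bigl(J_{n}+K_{n,i}+Q_{n,i}(E^{n,i})\bigr)E^{n,i}$ is bounded only once $\phi^{n,i}$ is known to stay in that strip. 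Global Lipschitz continuity (Assumption~\ref{ass. f Lipschitz}) handles the first derivative but not the second. For $s\le2$ your Lipschitz-plus-defect decomposition suffices; for $s=3$ you need this $L^{\infty}$ step before the Hochbruck--Ostermann bookkeeping with the $J$-condition can be completed.
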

    
    \begin{proof}
        The proof is divided into two parts. In the first part, we analyze the error introduced by the ETDRK predictor. In the second part, we study the error arising from the correction step. For brevity, we introduce the notation $t_{n,i}=t_n+c_i\tau$ and write $u=v+\mathcal{O}(\tau^{i})$ to indicate that $\|u-v\|\le C\,\tau^{i}$, where $C$ is a constant independent of $\tau$ or $n$. In what follows, we prove only for the case $s=3$, and the other two cases can be done similarly.
        
        \medskip
        
        \textbf{Step 1. Uniform Boundedness and Local Truncation Error.} 
        First, we prove that the stage values $\phi^{n,i}$ (for $i=1,\ldots,s$) remain uniformly bounded. Note from Theorem \ref{thm. energy stability} that we have $\|\phi^{n}\|_{L^{\infty}} \le \beta$,
        and by Assumption~\ref{ass. phi}, for any function $u$ satisfying $\|u\|_{L^{\infty}}\le\beta+1$, there exists a constant $M>0$ such that
        $\|g(u)\|_{L^{\infty}} \le M$.
        
        Let $v^{n+1} = \mathrm{e}^{\tau\Delta} \phi^{n}$,
        which is the solution of the following parabolic equation at time $\tau$,
        $$
        \partial_t v(\mathbf{x},t) = \Delta v(\mathbf{x},t),\quad
        v(\mathbf{x},0) = \phi^{n}(\mathbf{x}).
        $$
        According to the maximum principle, the maximum value of the solution occurs either on the boundary or initially on $\phi^{n}$. For Dirichlet boundary conditions, the boundary data naturally ensure that $v^{n+1}$ is bounded, whereas for periodic boundary conditions, the maximum is in fact attained only on the initial layer $\phi^{n}$. Consequently, we have  $\|v^{n+1}\|_{L^{\infty}}\le\|\phi^{n}\|_{L^{\infty}} \le \beta$, implying that
        \begin{equation}\label{infty_estimate1}
            \|\mathrm{e}^{-\tau L}\phi^{n}\|_{L^{\infty}} \le \|\mathrm{e}^{-\tau S}\mathrm{e}^{\tau\Delta}\phi^{n}\|_{L^{\infty}} \le \|\phi^{n}\|_{L^{\infty}} \le \beta.
        \end{equation}
        Moreover, as the $\varphi$--functions satisfy
        $\varphi_{k}(-\tau L) = \frac{1}{\tau^{k}} \int_{0}^{\tau} \mathrm{e}^{-(\tau-\theta)L}\frac{\theta^{k-1}}{(k-1)!}\,\mathrm{d}\theta,$
        we see for any $u$ that
        $\|\varphi_{k}(-\tau L)u\|_{L^{\infty}} \le \|u\|_{L^{\infty}}$.
        Since the coefficient functions $a_{i,j}(-\tau L)$ are linear combinations of $\{\varphi_{k}\}$, it follows that for any $\|u\|_{L^{\infty}}\le \beta+1$,
        \begin{equation}\label{infty_estimate2}
            \|a_{i,j}(-\tau L)g(u)\|_{L^{\infty}}\le \|g(u)\|_{L^{\infty}} \le M.
        \end{equation}
        Hence, with $\tau M\le 1$, one obtains from \eqref{fml. ETDRK original} the bounds:
        $$\|\phi^{n,1}\|_{L^{\infty}} \le\beta, \quad \|\phi^{n,2}\|_{L^{\infty}} \le \beta+\tau M, \quad \|\phi^{n,3}\|_{L^{\infty}} \le \beta+2\tau M.$$
        Choosing $\tau_{3}$ so that $2\tau_{3} M\le 1$, we can deduce for $\tau\leq\tau_3$ the uniform boundedness:
        \begin{equation}\label{infty_uniform_boundedness}
            \|\phi^{n,i}\|_{L^{\infty}} \le \beta+1,\quad i=1,2,3.
        \end{equation}
        
        Next, we consider the local truncation error of the ETDRK predictor. Denote $g(t)= g\bigl(\phi(t)\bigr)$. The Duhamel formula of \eqref{eq. gradient flow split} reads
        \begin{equation}\label{proof_01}
            \phi(t_{n,i}) = \mathrm{e}^{-c_i\tau L}\phi(t_n) + \int_{0}^{c_i\tau} \mathrm{e}^{-(c_i\tau-\zeta)L}g(t_n+\zeta)\,\mathrm{d}\zeta.
        \end{equation}
        Expanding $g(t_{n}+\zeta)$ via Taylor's formula yields
        \begin{equation}\label{proof_02}
            g(t_n+\zeta) = \sum_{j=1}^{q}\frac{\zeta^{j-1}}{(j-1)!}g^{(j-1)}(t_n) + \int_{0}^{\zeta} \frac{(\zeta-\sigma)^{q-1}}{(q-1)!}g^{(q)}(t_n+\sigma)\,\mathrm{d}\sigma,
        \end{equation}
        where $q>0$ is an integer to be chosen appropriately later. Substituting \eqref{proof_02} into \eqref{proof_01} leads to
        \begin{equation}\label{proof_03}
            \begin{split}
                \phi(t_{n,i}) =\,& \chi_i(-\tau L)\phi(t_n) + \sum_{j=1}^{q_i}(c_i\tau)^{j}\varphi_{j}(-c_i\tau L)g^{(j-1)}(t_n)\\[1mm]
                &+ \int_{0}^{c_i\tau} \mathrm{e}^{-(c_i\tau-\zeta)L} \int_{0}^{\zeta}\frac{(\zeta-\sigma)^{q_i-1}}{(q_i-1)!} g^{(q_i)}(t_n+\sigma)\,\mathrm{d}\sigma\,\mathrm{d}\zeta.
            \end{split}
        \end{equation}
        Based on the numerical scheme \eqref{fml. ETDRK original}, we define the local truncation error $R_{n,i}$ for $i=1,\ldots,s+1$ via
        \begin{subequations}\label{local_truncation}
            \begin{align}
                \phi(t_{n,i}) &= \chi_i(-\tau L)\phi(t_n) + \tau\sum_{k=1}^{i-1}a_{i,k}(-\tau L)g(t_{n,k}) + R_{n,i}, \label{local_truncation1}\\[1mm]
                \phi(t_{n+1}) &= \chi(-\tau L)\phi(t_n) + \tau\sum_{i=1}^{s}b_{i}(-\tau L)g(t_{n,i}) + R_{n,s+1}. \label{local_truncation2}
            \end{align}
        \end{subequations}
        Performing the Taylor expansion (\ref{proof_02}) of $g(t_{n,k})$ in \eqref{local_truncation1}, and subsequently subtracting \eqref{proof_03} from \eqref{local_truncation1}, it can be observed that for $i = 1, \ldots, s$,
        $$R_{n,i} = \sum_{j=1}^{q_i}\tau^{j}\psi_{j,i}(-\tau L)g^{(j-1)}(t_n) + \mathcal{O}(\tau^{q_i+1}).$$
        Similarly, one can find
        $$R_{n,s+1} = \sum_{j=1}^{q_{s+1}}\tau^{j}\psi_{j}(-\tau L)g^{(j-1)}(t_n) + \mathcal{O}(\tau^{q_{s+1}+1}).$$
        
        Let $E^n = \phi^{n} - \phi(t_n)$ and $E^{n,i} = \phi^{n,i} - \phi(t_{n,i})$ denote the errors at the discrete time levels $t_n$ and $t_{n,i}$, respectively. Subtracting \eqref{local_truncation1}--\eqref{local_truncation2} from the corresponding numerical step \eqref{fml. ETDRK original}, we obtain
        \begin{subequations}\label{error_eqs}
            \begin{align}
                E^{n,1} =& E^n, \label{error_eq0}\\[1mm]
                E^{n,i} =& \chi_i(-\tau L)E^n + \tau\sum_{j=1}^{i-1}a_{i,j}(-\tau L)\Bigl( g\bigl(\phi^{n,j}\bigr) - g\bigl(\phi(t_{n,j})\bigr) \Bigr)\label{error_eq1}\\
                &- R_{n,i}, \quad i=2,\ldots,s,\nonumber \\[1mm]
                E^{n,s+1} =& \chi(-\tau L)E^n + \tau\sum_{i=1}^{s}b_{i}(-\tau L)\Bigl( g\bigl(\phi^{n,i}\bigr) - g\bigl(\phi(t_{n,i})\bigr) \Bigr) - R_{n,s+1}. \label{error_eq2}
            \end{align}
        \end{subequations}
        To estimate the nonlinear terms in \eqref{error_eqs}, we apply a Taylor expansion with an integral remainder for any $u\in L^2(\Omega)$. In particular, for any given stage $t_{n,i}$, we have
        \begin{equation}\label{proof_definite_J}
            \begin{split}
                &g(\phi(t_{n,i}) + u) - g(\phi(t_{n,i})) \\
                =& \frac{\partial g}{\partial\phi}\bigl(\phi(t_{n,i})\bigr)u + \int_{0}^{1}(1-\theta)\frac{\partial^{2}g}{\partial\phi^{2}}\Bigl(\phi(t_{n,i}) + \theta u\Bigr)\,\mathrm{d}\theta\;\cdot (u)^2\\[1mm]
                =& \frac{\partial g}{\partial\phi}\bigl(\phi(t_n)\bigr)u + \left(\phi(t_{n,i})-\phi(t_n)\right)\int_{0}^{1}\frac{\partial^{2}g}{\partial\phi^{2}}\Bigl((1-\theta)\phi(t_n)+\theta\phi(t_{n,i})\Bigr)\,\mathrm{d}\theta\cdot u\\[1mm]
                &+ \int_{0}^{1}(1-\theta)\frac{\partial^{2}g}{\partial\phi^{2}}\Bigl(\phi(t_{n,i})+\theta u\Bigr)\,\mathrm{d}\theta\; \cdot (u)^2,
            \end{split}
        \end{equation}
        where the first term is evaluated at $\phi(t_n)$. Along with the remaining terms, we define
        $$J_n := \frac{\partial g}{\partial\phi}\bigl(\phi(t_n)\bigr),\quad K_{n,i} := \left(\phi(t_{n,i})-\phi(t_n)\right)\int_{0}^{1}\frac{\partial^{2}g}{\partial\phi^{2}}\Bigl((1-\theta)\phi(t_n)+\theta\phi(t_{n,i})\Bigr)d\theta,$$
        and introduce the operator
        $$Q_{n,i}(u) := \int_{0}^{1}(1-\theta)\frac{\partial^{2}g}{\partial\phi^{2}}\Bigl(\phi(t_{n,i})+\theta u\Bigr)d\theta\; \cdot u.$$
        Thus, we may denote
        \[
        J_{n,i}(u):= J_n + K_{n,i} + Q_{n,i}(u).
        \]
        By the uniform bound \eqref{infty_uniform_boundedness}, the Lipschitz continuity of $f$ (Assumption~\ref{ass. f Lipschitz}) and the sufficient smoothness of $g$ (Assumption~\ref{ass. phi}), we deduce that
        \begin{subequations}\label{operator_J}
            \begin{align}
                &\|J_n u\| \le C\|u\|, \quad \|K_{n,i}u\| \le C\,\tau\|u\|,\\[1mm]
                &\|Q_{n,i}(E^{n,i})u\| \le C\|E^{n,i}\|_{L^{\infty}}\|u\| \le C\|u\|, \quad \|J_{n,i}(E^{n,i})u\| \le C\|u\|.
            \end{align}
        \end{subequations}
        
       Now for $i=2$ in \eqref{error_eqs}, from \eqref{error_eq0} and \eqref{error_eq1}, one obtains
        \begin{equation}\label{proof_En2}
            \begin{split}
                E^{n,2} =& \chi_2(-\tau L)E^n + \tau\,a_{2,1}(-\tau L)\Bigl( J_{n,1}(E^n)E^n \Bigr) - R_{n,2} \\
                =:& \chi_2(-\tau L)E^n + \tau\,\mathcal{N}_{n,2} - R_{n,2}.
            \end{split}
        \end{equation}
        Similarly, the error equation for $i=3$ reads
        \begin{equation}\label{recursive_En3}
            \begin{split}
                E^{n,3} =&\, \chi_3(-\tau L)E^n + \tau\sum_{j=1}^{2} a_{3,j}(-\tau L)\Bigl( J_{n,j}(E^{n,j})E^{n,j} \Bigr) - R_{n,3}\\[1mm]
                =&\, \chi_3(-\tau L)E^n + \tau\,\mathcal{N}_{n,3}(E^n) - \tau\,a_{3,2}(-\tau L)\Bigl( J_{n,2}(E^{n,2})R_{n,2} \Bigr) - R_{n,3},
            \end{split}
        \end{equation}
        where we have defined the operator
        \begin{align*}
            \mathcal{N}_{n,3}(E^n) =& a_{3,1}(-\tau L)\Bigl( J_{n,1}(E^n)E^n \Bigr) \\
            &+ a_{3,2}(-\tau L)\Bigl[ J_{n,2}(E^{n,2})\Bigl( \chi_2(-\tau L)E^n + \tau\,\mathcal{N}_{n,2}(E^n) \Bigr) \Bigr].
        \end{align*}
        Choosing $q_2 = 1$ and noting that the order condition implies $\psi_{1,2}(-\tau L)=0$, we find
        $-\tau\,a_{3,2}(-\tau L)\bigl( J_{n,2}(E^{n,2})R_{n,2} \bigr) = \mathcal{O}(\tau^3),$
        and so
        \begin{equation}\label{proof_En3_final}
            E^{n,3} = \chi_3(-\tau L)E^n + \tau\,\mathcal{N}_{n,3}(E^n) - R_{n,3} + \mathcal{O}(\tau^3).
        \end{equation}
        Furthermore, we can deduce from \eqref{error_eq2} that
        \begin{equation}\label{recursive_En4}
            \begin{split}
                E^{n,4} =&\, \chi(-\tau L)E^n + \tau \sum_{i=1}^{3} b_{i}(-\tau L) \Bigl( J_{n,i}(E^{n,i})E^{n,i} \Bigr) - R_{n,4}\\[1mm]
                =&\, \chi(-\tau L)E^n + \tau\,\mathcal{N}_{n,4}(E^n) - \tau\sum_{i=2}^{3}b_{i}(-\tau L)\Bigl( J_{n,i}(E^{n,i})R_{n,i} \Bigr) - R_{n,4},
            \end{split}
        \end{equation}
        where
        \begin{align*}
            \mathcal{N}_{n,4}(E^n) =& b_{1}(-\tau L) \Bigl( J_{n,1}(E^n)E^n \Bigr) \\
            &+ \sum_{i=2}^{3}b_{i}(-\tau L)\Bigl[ J_{n,i}(E^{n,i})\Bigl( \chi_i(-\tau L)E^n + \tau\,\mathcal{N}_{n,i}(E^n) \Bigr) \Bigr].
        \end{align*}
        Again, by choosing $q_2=q_3=2$ (so that $\psi_{1,i}(-\tau L)=0$) and $q_4 = 3$, one obtains
        \begin{align*}
            \begin{split}
                E^{n,4} =& \chi(-\tau L)E^{n} + \tau\mathcal{N}_{n,4}(E^{n})-\sum_{i=1}^{3}\tau^{i}\psi_{i}(-\tau L)g^{(i-1)}(t_{n}) + \mathcal{O}(\tau^{4}) \\
                &- \tau^{3}\sum_{i=2}^{3}b_{i}(-\tau L)\left( J_{n,i}(E^{n,i})\psi_{2,i}(-\tau L)g^{(1)}(t_{n}) \right).
            \end{split}
        \end{align*}
        For the last term above, we have by the definition of $J_{n,i}(E^{n,i})$,
        \begin{align*}
            & -\tau^{3}\sum_{i=2}^{3}b_{i}(-\tau L)\left( J_{n,i}(E^{n,i})\psi_{2,i}(-\tau L)g^{(1)}(t_{n}) \right) \\
            =& -\tau^{3}\sum_{i=2}^{3}b_{i}(-\tau L)\left( \left(J_{n}+K_{n,i}+Q_{n,i}(E^{n,i})\right)\psi_{2,i}(-\tau L)g^{(1)}(t_{n}) \right) \\
            =& -\tau^{3}\sum_{i=2}^{3}b_{i}(-\tau L)\left( \left(J_{n}+Q_{n,i}(E^{n,i})\right)\psi_{2,i}(-\tau L)g^{(1)}(t_{n}) \right) + \mathcal{O}(\tau^{4}).
        \end{align*}
        Combining (\ref{proof_En2}) and (\ref{proof_En3_final}), we have
        \begin{align*}
            & \tau^{3}\left\| \sum_{i=2}^{3}b_{i}(-\tau L)\left( Q_{n,i}(E^{n,i})\psi_{2,i}(-\tau L)g^{(1)}(t_{n}) \right) \right\| \\
            \leq& C\tau^{3}\sum_{i=2}^{3}\|E^{n,i}\| \leq C\tau^{3}\|E^{n}\| + \mathcal{O}(\tau^{4}). 
        \end{align*}
        Define 
        $
        \mathcal{N}_{n}(E^{n}) = \mathcal{N}_{n,4}(E^{n}) + \tau^{2} \sum_{i=2}^{3}b_{i}(-\tau L)\left( Q_{n,i}(E^{n,i})\psi_{2,i}(-\tau L)g^{(1)}(t_{n}) \right),
        $
        and one finally obtains
        \begin{align}
            \begin{split}
                E^{n,4} =& \chi(-\tau L)E^{n} + \tau\mathcal{N}_{n}(E^{n}) - \tau^{3}\sum_{i=2}^{3}b_{i}(-\tau L)\left( J_{n}\psi_{2,i}(-\tau L)g^{(1)}(t_{n}) \right) \\
                & -\sum_{i=1}^{3}\tau^{i}\psi_{i}(-\tau L)g^{(i-1)}(t_{n}) + \mathcal{O}(\tau^{4}).
            \end{split}\label{proof_En4_end}
        \end{align}
        Note that for $b_{i}(-\tau L)$ and $\psi_{3}(-\tau L)$, there always exist bounded operators $\widetilde{b}_{i}$ and $\widetilde{\psi}_{3}$ such that:
        \begin{align*}
            b_{i}(-\tau L) = b_{i}(0) + \tau L \widetilde{b}_{i}(-\tau L), \quad \psi_{3}(-\tau L) = \psi_{3}(0) + \tau L \widetilde{\psi}_{3}(-\tau L).
        \end{align*}
       Therefore, under the order conditions in Table~\ref{tab. order condition} (with $s=3$), by imposing higher regularity requirements on $g$ (Assumption~\ref{ass. phi}), (\ref{proof_En4_end}) can be further simplified to
        \begin{align}
            \begin{split}
                E^{n,s+1} = \chi(-\tau L)E^{n} + \tau\mathcal{N}_{n}(E^{n}) + \mathcal{O}(\tau^{4}).
            \end{split} 
        \end{align}
        which immediately imply that
        \begin{equation}\label{est_RK_error}
            \|E^{n,s+1}\| \le (1+C\tau)\|E^n\|+C\,\tau^{s+1}.
        \end{equation}
    
        \medskip
        
        \textbf{Step 2. Correction Step.} 
        From the correction step given in \eqref{scm. Corrector 1 simplified}, it holds that
        $$
        E^{n+1} - \lambda^{n+1}p'(\phi^{n+1}) = E^{n,s+1}.
        $$
        Taking the $L^2$-norm squared on both sides of above, we obtain
        $$
        \|E^{n+1}\|^2 - 2\,\Bigl\langle \lambda^{n+1}p'(\phi^{n+1}),\,\phi^{n+1}-\phi(t_{n+1}) \Bigr\rangle + \|\lambda^{n+1}p'(\phi^{n+1})\|^2 = \|E^{n,s+1}\|^2.
        $$
        A key observation is that
        $$
        \Bigl\langle \lambda^{n+1}p'(\phi^{n+1}),\,\phi^{n+1}-\phi(t_{n+1}) \Bigr\rangle = -\Bigl\langle \lambda^{n+1},\,p'(\phi^{n+1})(\phi(t_{n+1})-\phi^{n+1}) \Bigr\rangle,
        $$
        and by employing a second-order Taylor expansion of $p(\cdot)$:
        $$
        p(\phi(t_{n+1})) - p(\phi^{n+1}) = p'(\phi^{n+1})(\phi(t_{n+1})- \phi^{n+1}) + \frac{1}{2}p''(\xi)\Bigl(\phi(t_{n+1})-\phi^{n+1}\Bigr)^2,
        $$
        where $\xi$ lies between $\phi^{n+1}$ and $\phi(t_{n+1})$, and noting that $p''(\xi)=-2$, we deduce
        \begin{align*}
            &\Bigl\langle \lambda^{n+1}p'(\phi^{n+1}),\, \phi^{n+1}-\phi(t_{n+1}) \Bigr\rangle  \\
            =& -\left\langle \lambda^{n+1}, p(\phi(t_{n+1})) \right\rangle + \left\langle \lambda^{n+1}, p(\phi^{n+1}) \right\rangle - \left\langle \lambda^{n+1}, \bigl( \phi(t_{n+1})-\phi^{n+1} \bigr)^2 \right\rangle \leq 0.
        \end{align*}
        Therefore, one immediately has
        $
        \|E^{n+1}\| \le \|E^{n,s+1}\|.
        $
        Combining this inequality with \eqref{est_RK_error} yields
        $$
        \|E^{n+1}\| \le (1+C\tau)\|E^n\|+C\,\tau^{s+1}.
        $$
        An application of the discrete Gronwall inequality then completes the proof.
    \end{proof}

    \begin{remark}
    For more complex phase-field models (e.g., the Cahn-Hilliard equation) and/or higher-order ETDRK satisfying the energy dissipation condition (if can be constructed in future), as long as the $L^2$ convergence holds on the plain ETDRK scheme, our analysis for the PC framework remains valid. In particular, the error accumulation introduced by the corrector remains unchanged.

    \end{remark}

    
    
    
    \section{Spatial discretization} \label{sec. 4}
    
    In this section, we discuss the effect of spatial discretizations on the preservation of the structure in PCC. 
    
    The predictor in practice can be performed using any spatial discretization technique in the user's favor, e.g., finite difference, finite element or spectral method. 
    Note that both the energy corrector (\ref{scm. Corrector 1}) and the maximum bound corrector (\ref{scm. Corrector 2}) after spatial discretization can still provide energy stability and MBP in the discrete sense. The key point lies in how the spatially discretized energy $\mathcal{E}_h\approx\mathcal{E}$  is defined for (\ref{scm. Corrector 1}) and whether (\ref{scm. Corrector 2}) can maintain the stability established  via (\ref{scm. Corrector 1}).
    
    More precisely, we denote by $\phi^{n+1}_{*,h},\phi^{n+1}_h$ the numerical solutions from spatially discretized \eqref{scm. Corrector 1} and \eqref{scm. Corrector 2}, respectively. Then, (\ref{scm. Corrector 1}) offers 
    $\mathcal{E}_h(\phi^{n+1}_{*,h})\leq \mathcal{E}_h(\phi^{n}_{h})$ with the chosen $\mathcal{E}_h$, and the outcome of (\ref{scm. Corrector 2}) can still be given by
    $\Phi(\phi^{n+1}_{*,h})$ with $\Phi$ defined in (\ref{Phi def}).
    We would like to discuss if 
    $\mathcal{E}_h(\phi^{n+1}_{h})\leq \mathcal{E}_h(\phi^{n+1}_{*,h})$ holds. 
    
    \subsection{Fourier spectral method} Suppose that we are now working with
    the Fourier spectral method \cite{shen2011spectral}, which is popular for phase-field models. Particularly, it makes ETD very efficient. 
    For simplicity, we assume the domain to be $\Omega = [0, 2\pi]^{d}$ with periodic boundary conditions for (\ref{eq. gradient flow}) and give some basic related notations. Let $M$ be an even positive integer with $h = \frac{2\pi}{M}$ the mesh size. 
    Denote
    $\mathcal{L}_{h}=\{\mathbf{l}\in\mathbb{Z}^{d}\ |\ -M/2\leq l_{k}\leq M/2-1,\; k = 1,\ldots, d \},$
    $\mathcal{J}_{h}=\{\mathbf{j}\in\mathbb{Z}^{d}\ |\ 0 \leq j_{k}\leq M-1,\; k = 1,\ldots, d \},$
    and $\Omega_{h}=\{\mathbf{x}_{\mathbf{j}}=h\mathbf{j}\ |\ \mathbf{j}\in \mathcal{J}_{h}\}$ the set of grid points on $\Omega$. 
    The projection $\mathcal{P}_{h}$  and the interpolation $\mathcal{I}_{h}$ operators are denoted as 
    \begin{align*}
    \bigl(\mathcal{P}_{h}u\bigr)(\mathbf{x}) = \sum_{\mathbf{l}\in\mathcal{L}_{h}} \widehat{u}_{\mathbf{l}} \mathrm{e}^{\text{i}\mathbf{l}\cdot \mathbf{x}},\quad 
    \bigl(\mathcal{I}_{h}u\bigr)(\mathbf{x}) = \sum_{\mathbf{l}\in\mathcal{L}_{h}} \widetilde{u}_{\mathbf{l}} \mathrm{e}^{\text{i}\mathbf{l}\cdot \mathbf{x}},       
    \end{align*}
    with $
        \widehat{u}_{\mathbf{l}} = \frac{1}{|\Omega|}\int_{\Omega} u(\mathbf{x}) \mathrm{e}^{-\text{i}\mathbf{l}\cdot\mathbf{x}} d\mathbf{x}
    $  the Fourier coefficient and $
        \widetilde{u}_{\mathbf{l}} = \frac{1}{M^{d}}\sum_{\mathbf{x}_{\mathbf{j}}\in\Omega_h} u(\mathbf{x}_{\mathbf{j}}) \mathrm{e}^{-\text{i}\mathbf{l}\cdot\mathbf{x}_{\mathbf{j}}} 
    $  the discrete Fourier coefficient.
    
    Under the Fourier spectral method discretization, it is convenient in practice to define $\phi^{n+1}_h=\mathcal{I}_{h}\Phi(\phi_{*,h}^{n+1})$ for PCC, and the discrete energy is indeed computed as 
    \begin{align}\label{energy I}
        \mathcal{E}_{h}[\phi] = \frac{1}{2} \left\| \nabla \mathcal{I}_{h}\phi \right\|^2 + \int_{\Omega} \mathcal{I}_{h}\bigl( F(\phi) \bigr)\;\mathrm{d}\mathbf{x}.
    \end{align}
    If so, on the one hand, we can see under Assumption~\ref{ass. f boundary} that
        \begin{align}
            \int_{\Omega}{ \mathcal{I}_{h}\bigl[ F(\phi_{h}^{n+1}) \bigr] }\;\mathrm{d}\mathbf{x} =& h^{d}\sum_{\mathbf{j}\in\mathcal{J}_{h}}F\bigl(\phi_{h}^{n+1}(\mathbf{x}_{\mathbf{j}})\bigr) \label{fml. F diminish FP} \\
            \leq& h^{d}\sum_{\mathbf{j}\in\mathcal{J}_{h}}F\bigl(\phi_{*,h}^{n+1}(\mathbf{x}_{\mathbf{j}})\bigr) = \int_{\Omega}{ \mathcal{I}_{h}\bigl[ F(\phi_{*,h}^{n+1}) \bigr] }\;\mathrm{d}\mathbf{x}. \nonumber
        \end{align}
        For the other part, we may get an extra interpolation error 
          \begin{align*}
            \bigl\| \nabla\phi_{h}^{n+1} \bigr\| = \bigl\| \nabla\mathcal{I}_{h}\bigl( \Phi\bigl( \phi_{*,h}^{n+1} \bigr) \bigr) \bigr\| \leq& \bigl\| \nabla\Phi\bigl( \phi_{*,h}^{n+1} \bigr) \bigr\| + \bigl\| \nabla\bigl( \mathcal{I}_{h} - I \bigr)\bigl( \Phi\bigl( \phi_{*,h}^{n+1} \bigr) \bigr) \bigr\| \\
            \leq& \bigl\| \nabla\phi_{*,h}^{n+1} \bigr\| + \bigl\| \nabla\bigl( \mathcal{I}_{h} - I \bigr)\bigl( \Phi\bigl( \phi_{*,h}^{n+1} \bigr) \bigr) \bigr\|,
        \end{align*}
        which might break $\mathcal{E}_h(\phi^{n+1}_{*,h})\leq \mathcal{E}_h(\phi^{n}_{h})$,  
        though this is never felt in our numerical experience. For theoretical guarantee, we consider the following. 
    \begin{proposition}
        \label{cor. energy diminish FP}
    Define  for PCC $\phi^{n+1}_h=\mathcal{P}_{h}\Phi(\phi_{*,h}^{n+1})$  and 
    $\mathcal{E}_{h}[\phi] = \frac{1}{2} \left\| \nabla \mathcal{P}_{h}\phi \right\|^2 + \int_{\Omega} \mathcal{I}_{h}\bigl( F(\phi) \bigr)\;\mathrm{d}\mathbf{x}$, then under Assumption~\ref{ass. f boundary},
    we have 
       $
            \mathcal{E}_{h}[\phi_{h}^{n+1}] \leq \mathcal{E}_{h}[\phi_{*,h}^{n+1}].
        $
    \end{proposition}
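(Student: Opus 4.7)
The plan is to split $\mathcal{E}_h[\phi_h^{n+1}]-\mathcal{E}_h[\phi_{*,h}^{n+1}]$ into its gradient piece and its $F$ piece and establish each separately, mirroring the continuous proof of Theorem~\ref{thm. properties for ETDRK-PCC}. The key benefit of using $\mathcal{P}_h$ on the gradient part is that, unlike $\mathcal{I}_h$, the orthogonal projection $\mathcal{P}_h$ commutes with $\nabla$ on Fourier modes and is a contraction in $L^2$, which eliminates the extra interpolation term flagged in the discussion preceding the proposition.

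For the gradient piece, first note that $\phi_{*,h}^{n+1}$ is a trigonometric polynomial with wavenumbers in $\mathcal{L}_h$ (being the spectral predictor's output), and by construction so is $\phi_h^{n+1}=\mathcal{P}_h\Phi(\phi_{*,h}^{n+1})$; hence $\mathcal{P}_h$ acts as the identity on both, and the seminorms in $\mathcal{E}_h$ simplify to $\|\nabla\phi_h^{n+1}\|$ and $\|\nabla\phi_{*,h}^{n+1}\|$. Using commutation of $\nabla$ with $\mathcal{P}_h$, the contraction $\|\mathcal{P}_h v\|\le\|v\|$, and Lemma~\ref{lem. chain rule} applied to the piecewise-smooth cut-off $\Phi$ (which yields $\nabla\Phi(u)=\Phi'(u)\nabla u$ a.e.\ with $\Phi'\in\{0,1\}$), I would chain
$$\|\nabla\phi_h^{n+1}\|=\|\mathcal{P}_h\nabla\Phi(\phi_{*,h}^{n+1})\|\le\|\nabla\Phi(\phi_{*,h}^{n+1})\|\le\|\nabla\phi_{*,h}^{n+1}\|,$$
which closes this part.

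For the $F$ piece, the exact quadrature identity $\int_\Omega\mathcal{I}_h u\,\mathrm{d}\mathbf{x}=h^d\sum_{\mathbf{j}\in\mathcal{J}_h}u(\mathbf{x}_{\mathbf{j}})$ reduces the inequality to the grid-sum comparison $h^d\sum_{\mathbf{j}}F(\phi_h^{n+1}(\mathbf{x}_{\mathbf{j}}))\le h^d\sum_{\mathbf{j}}F(\phi_{*,h}^{n+1}(\mathbf{x}_{\mathbf{j}}))$. My strategy is to insert the interpolatory cut-off $\mathcal{I}_h\Phi(\phi_{*,h}^{n+1})$ as a pivot — its grid values being exactly $\Phi(\phi_{*,h}^{n+1}(\mathbf{x}_{\mathbf{j}}))$ — and then invoke the monotonicity argument of \eqref{fml. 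F diminish FP} based on Assumption~\ref{ass. f boundary}, using that $F$ is nondecreasing on $[\beta,\infty)$ and nonincreasing on $(-\infty,-\beta]$.

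The main obstacle is that $\mathcal{P}_h$ is not interpolatory, so in general $\phi_h^{n+1}(\mathbf{x}_{\mathbf{j}})\neq\Phi(\phi_{*,h}^{n+1}(\mathbf{x}_{\mathbf{j}}))$, and the residual $(\mathcal{P}_h-\mathcal{I}_h)\Phi(\phi_{*,h}^{n+1})$ evaluated at the grid can exhibit Gibbs-type overshoots along the cut-off set $\{|\phi_{*,h}^{n+1}|=\beta\}$. Controlling this discrepancy is the delicate step, and I plan to handle it by combining the variational characterization of $\mathcal{P}_h$ (as the $L^2$ minimizer of $\|v-\Phi(\phi_{*,h}^{n+1})\|$ over trigonometric polynomials in $\mathcal{L}_h$) with a Taylor expansion of $F$ centered at $\Phi(\phi_{*,h}^{n+1}(\mathbf{x}_{\mathbf{j}}))$.
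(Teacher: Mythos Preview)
Your treatment of the gradient piece is correct and is exactly what the paper does: commute $\nabla$ with $\mathcal{P}_h$, use the $L^2$-contraction of $\mathcal{P}_h$, and then Lemma~\ref{lem. chain rule} to bound $\|\nabla\Phi(\phi_{*,h}^{n+1})\|\le\|\nabla\phi_{*,h}^{n+1}\|$. So far so good.

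The gap is in your handling of the $F$ piece. You have correctly put your finger on the difficulty: $\mathcal{P}_h$ is not interpolatory, so $\phi_h^{n+1}(\mathbf{x}_{\mathbf{j}})\neq\Phi(\phi_{*,h}^{n+1}(\mathbf{x}_{\mathbf{j}}))$ in general, and in fact the projection can overshoot $\pm\beta$ at the grid nodes via a Gibbs-type effect whenever the cut-off is active. For comparison, the paper simply cites \eqref{fml. F diminish FP} at this point; but \eqref{fml. F diminish FP} was derived under the interpolatory definition $\phi_h^{n+1}=\mathcal{I}_h\Phi(\phi_{*,h}^{n+1})$, where the nodal identity $\phi_h^{n+1}(\mathbf{x}_{\mathbf{j}})=\Phi(\phi_{*,h}^{n+1}(\mathbf{x}_{\mathbf{j}}))$ is exactly what drives the monotonicity argument. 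With $\mathcal{P}_h$ in place of $\mathcal{I}_h$ that identity is lost, so the paper's one-line appeal to \eqref{fml. F diminish FP} is, at best, as incomplete as your own attempt.

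Your proposed remedy --- combining the $L^2$-minimizing property of $\mathcal{P}_h$ with a Taylor expansion of $F$ around $\Phi(\phi_{*,h}^{n+1}(\mathbf{x}_{\mathbf{j}}))$ --- does not close this. The variational characterization controls $\|\mathcal{P}_h\Phi(\phi_{*,h}^{n+1})-\Phi(\phi_{*,h}^{n+1})\|$ in $L^2$, but the quantity you need to bound is a \emph{signed} nodal sum $h^d\sum_{\mathbf{j}}\big[F(\phi_h^{n+1}(\mathbf{x}_{\mathbf{j}}))-F(\Phi(\phi_{*,h}^{n+1}(\mathbf{x}_{\mathbf{j}})))\big]$, and a Taylor expansion at $\Phi(\phi_{*,h}^{n+1}(\mathbf{x}_{\mathbf{j}}))$ gives a first-order term $f(\pm\beta)\cdot(\phi_h^{n+1}(\mathbf{x}_{\mathbf{j}})\mp\beta)$ on the cut-off set whose sign is precisely wrong when the projection overshoots (since $f(\beta)\ge0$ and the overshoot is positive). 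Neither $L^2$-optimality nor the second-order remainder gives you a sign to absorb this. As written, the plan cannot deliver the pointwise (or summed) inequality you need, and you should either supply a genuinely new argument for this term or acknowledge that the $F$-inequality under the $\mathcal{P}_h$ definition remains open.
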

    \begin{proof}
        Clearly, we have
        $
            \bigl\| \nabla\phi_{h}^{n+1} \bigr\| = \bigl\| \nabla\mathcal{P}_{h}\bigl( \Phi\bigl( \phi_{*,h}^{n+1} \bigr) \bigr) \bigr\| =\bigl\| \mathcal{P}_{h}\nabla\Phi\bigl( \phi_{*,h}^{n+1} \bigr) \bigr\|
            \leq \bigl\| \nabla\phi_{*,h}^{n+1} \bigr\|.
       $ Note that under the Fourier spectral discretization, it is natural to have $\phi_{*,h}^{n+1}=\mathcal{P}_{h}\phi_{*,h}^{n+1}$.
        Combining with \eqref{fml. F diminish FP} yields the assertion.
    \end{proof}


    \subsection{Finite difference method}
    Another way to maintain the energy stability is to consider the finite difference discretization (for the energy). We take the two-dimensional case as an example (i.e., $d=2$) and consider the square region $\Omega = [x_{L}, x_{R}]^2$ with zero boundary conditions. Let the mesh size be $h= (x_{R} - x_{L})/M$ for some positive integer $M$. Denote 
    $
        \nabla_{h}\phi_{j,k} = \left( \delta_{x}^{+}\phi_{j,k},\,\delta_{y}^{+}\phi_{j,k} \right)^{\top}, $ 
      where $\delta_{x}^{+}\phi_{j,k} = \frac{1}{h}( \phi_{j+1,k} - \phi_{j,k}),\ 
    \delta_{y}^{+}\phi_{j,k} = \frac{1}{h}( \phi_{j,k+1} - \phi_{j,k})$.

    \begin{corollary}\label{cor. energy diminish FD}
    Define the discrete energy for PCC as 
    \begin{align*}
    \mathcal{E}_{h}[\phi] =  h^2\sum_{j=1}^{M-1}\sum_{k=1}^{M-1}\left(\frac12|\nabla_h\phi_{j,k}|^2+F(\phi_{j,k})\right). 
    \end{align*} Then under Assumption~\ref{ass. f boundary},  we have 
        $
            \mathcal{E}_{h}[\phi_{h}^{n+1}] \leq \mathcal{E}_{h}[\phi_{*,h}^{n+1}].
        $
    \end{corollary}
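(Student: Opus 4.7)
The plan is to decompose the discrete energy into the gradient part and the potential part, and to verify the inequality for each part separately. Since $\phi_h^{n+1}(\mathbf{x}_{j,k})=\Phi(\phi_{*,h}^{n+1}(\mathbf{x}_{j,k}))$ is obtained from the pointwise cut-off (\ref{fml. compute phi star}), both pieces of the argument will be completely local at the grid level, which is the advantage that finite differences offer over the Fourier case.

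For the potential part, I would repeat the reasoning used in the proof of Theorem~\ref{thm. properties for ETDRK-PCC}: Assumption~\ref{ass. f boundary} forces $F$ to be monotone nondecreasing on $[\beta,\infty)$ and nonincreasing on $(-\infty,-\beta]$, so $F(\Phi(z))\le F(z)$ for every $z\in\mathbb{R}$. Applying this node by node and summing over $(j,k)$ gives
\begin{align*}
h^2\sum_{j,k} F\bigl(\phi_h^{n+1}(\mathbf{x}_{j,k})\bigr) \le h^2\sum_{j,k} F\bigl(\phi_{*,h}^{n+1}(\mathbf{x}_{j,k})\bigr).
\end{align*}

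For the gradient part, the key observation is the non-expansiveness property (\ref{fml. Phi property}) of $\Phi$. Using it on each one-dimensional forward difference, I obtain at every grid point
\begin{align*}
|\delta_x^{+}\phi_h^{n+1}|_{j,k} = \tfrac{1}{h}\bigl|\Phi(\phi_{*,j+1,k}^{n+1})-\Phi(\phi_{*,j,k}^{n+1})\bigr| \le \tfrac{1}{h}\bigl|\phi_{*,j+1,k}^{n+1}-\phi_{*,j,k}^{n+1}\bigr| = |\delta_x^{+}\phi_{*,h}^{n+1}|_{j,k},
\end{align*}
and analogously for $\delta_y^{+}$. Squaring and adding the two components yields $|\nabla_h\phi_h^{n+1}|_{j,k}^2 \le |\nabla_h\phi_{*,h}^{n+1}|_{j,k}^2$ pointwise; summation then delivers the gradient-part inequality. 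Combining with the potential bound completes the proof. The zero boundary condition does not cause any trouble, because the sum in the definition of $\mathcal{E}_h$ runs only over interior nodes $1\le j,k\le M-1$ and the forward differences involved at $j=M-1$ or $k=M-1$ use boundary values that are identical ($=0$) for $\phi_h^{n+1}$ and $\phi_{*,h}^{n+1}$, so the pointwise Lipschitz bound still applies.

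I do not anticipate a real obstacle here: unlike the spectral case in Proposition~4.1, there is no interpolation/projection mismatch, because the finite-difference gradient operator acts only on nodal values and $\Phi$ commutes with ``taking nodal values.'' The only small subtlety to double check is the handling of boundary stencils under the zero Dirichlet condition, which is straightforward once noted.
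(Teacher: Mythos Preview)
Your proposal is correct and follows essentially the same route as the paper's own proof: the paper also uses the $1$-Lipschitz property of $\Phi$ to bound each forward difference $|\delta_x^+\phi_{j,k}^{n+1}|\le|\delta_x^+\phi_{*,j,k}^{n+1}|$ (and similarly in $y$), then invokes the monotonicity of $F$ outside $[-\beta,\beta]$ for the potential part. Your extra remark about the boundary stencils is a harmless clarification that the paper does not spell out.
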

    \begin{proof}
        By definition of $\Phi$, we have $\phi_{j,k}^{n+1} = \Phi(\phi_{*,j,k}^{n+1})$ and
        \begin{align*}
            \bigl| \delta_{x}^{+}\phi_{j,k}^{n+1} \bigr| &
            = \frac{1}{h}\Bigl| \Phi(\phi_{*,j+1,k}^{n+1}) - \Phi(\phi_{*,j,k}^{n+1}) \Bigr| 
            \leq \frac{1}{h}\Bigl| \phi_{*,j+1,k}^{n+1} - \phi_{*,j,k}^{n+1} \Bigr|
            = \bigl| \delta_{x}^{+}\phi_{*,j,k}^{n+1} \bigr|.
        \end{align*}
        Similarly,  $\bigl| \delta_{y}^{+}\phi_{j,k}^{n+1} \bigr| \leq \bigl| \delta_{y}^{+}\phi_{*,j,k}^{n+1} \bigr|$ and so 
        $
            | \nabla_{h}\phi_{j,k}^{n+1} |^2 \leq | \nabla_{h}\phi_{*,j,k}^{n+1} |^2. 
        $
        The argument for the nonlinear part goes as  \eqref{fml. F diminish FP}, which leads to the assertion.
    \end{proof}
    
    
    \section{Numerical experiment} \label{sec. 5}
    
    In this section, we carry out numerical experiments to demonstrate the effectiveness of the proposed framework: PCC and PC. In all of our examples, to make computations efficient, we impose periodic boundary conditions for \eqref{eq. gradient flow} and utilize the Fourier spectral method  for spatial discretizations. The energy will be calculated as \eqref{energy I}.
    
    \subsection{Setup of the predictor} \label{sec. 5.1} 
    It is needed to clarify first the used predictors and their short names for later reference. 
    The predictors for the tests throughout the section are the ETDRK class of schemes, including the energy stable ones and unstable ones. The energy stable ones that satisfy Assumption~\ref{ass. A} are the following three ($s=1,2,3$) from  \cite{fu2024higher}.
    {\small
    $$
    \begin{array}{lc}
        \text{ETDRK1}: & 
        \begin{array}{c|c}
            0 & 0 \\
            \hline
            & \varphi_{1}(z)
        \end{array} \\[2mm]
        \text{ETDRK2}: & 
        \begin{array}{c|cc}
            0 &   &   \\
            1 & \varphi_{1}(z) &   \\
            \hline
            & \varphi_{1}(z)-\varphi_{2}(z) & \varphi_{2}(z)
        \end{array} \\[2mm]
        \text{ETDRK3}: & 
        \begin{array}{c|ccc}
            0 &   &   &   \\
            1 & \varphi_{1}(z) &    &   \\
            \frac{2}{3} & \frac{2}{3}\varphi_{1}\bigl(\frac{2}{3}z\bigr)-\frac{4}{9}\varphi_{2}\bigl(\frac{2}{3}z\bigr) & \frac{4}{9}\varphi_{2}\bigl(\frac{2}{3}z\bigr) &     \\
            \hline
            & \frac{3}{4}\varphi_{1}(z)-\varphi_{2}(z) & \varphi_{2}(z)-\frac{1}{2}\varphi_{1}(z)  & \frac{3}{4}\varphi_{1}(z)
        \end{array}
    \end{array}
    $$}
    The energy unstable ones  which we shall refer as U-ETDRK, are the following two ($s=3,4$).
    {\small
    $$
    \begin{array}{lc}
        \text{U-ETDRK3}:\\ 
        \begin{array}{c|ccc}
            0 &   &   &   \\
            \frac{1}{2} & \frac{1}{2}\varphi_{1}\bigl(\frac{1}{2}z\bigr) &    &   \\
            1 & -\varphi_{1}(z) &  2\varphi_{1}(z)   &     \\
            \hline
            & 4\varphi_{3}(z)-3\varphi_{2}(z)+\varphi_{1}(z) & -8\varphi_{3}(z)+4\varphi_{2}(z)  & 4\varphi_{3}(z)-\varphi_{2}(z)
        \end{array} \\
        \\[2mm]
        \text{U-ETDRK4}: \\ 
        \begin{array}{c|cccc}
            0 &   &   &    &   \\
            \frac{1}{2} & \frac{1}{2}\varphi_{1}\bigl(\frac{1}{2}z\bigr) &    &    &   \\
            \frac{1}{2} & \frac{1}{2}\varphi_{1}\bigl(\frac{1}{2}z\bigr)-\varphi_{2}\bigl(\frac{1}{2}z\bigr) &  \varphi_{2}\bigl(\frac{1}{2}z\bigr)  &    &   \\
            1 & \varphi_{1}(z)-2\varphi_{2}(z) &  0   &  2\varphi_{2}(z)   &     \\
            \hline
            & \varphi_{1}(z)-3\varphi_{2}(z)+4\varphi_{3}(z) & 2\varphi_{2}(z)-4\varphi_{3}(z)  & 2\varphi_{2}(z)-4\varphi_{3}(z)  & 4\varphi_{3}(z)-\varphi_{2}(z)
        \end{array}
    \end{array}
    $$}
    
    The above ETDRK schemes may be used as plain methods, i.e., without correction, for comparison purposes.  When they are used as predictors in our PCC or PC framework, we will add a suffix to their names. For example, if U-ETDRK3 is used for PCC (\ref{scm. Predictor}), we will refer to the whole scheme as U-ETDRK3-PCC, and if ETDRK3 is used for PC (\ref{scm. ETDRK Predictor simplified}), it will be referred as ETDRK3-PC.
    
    \subsection{Allen-Cahn equation}
    Our first numerical experiment 
    considers an Allen-Cahn equation, i.e.,  $\mathcal{G} = I$ in (\ref{eq. gradient flow}), with the double-well type potential
    \begin{align*}
        F(\phi) = \frac{1}{4\varepsilon^2}(\phi^{2}-\beta)^{2},
    \end{align*}
    which is the most classical case in applications. Here, $\varepsilon,\beta>0$ will be given parameters. It is well known that the model satisfies MBP: When the initial data of (\ref{eq. gradient flow}) is within the interval $[-\beta, \beta]$, the solution will remain in $[-\beta, \beta]$ for all time.
    
    \begin{example}\label{Ex 1}
        Consider (\ref{eq. gradient flow}) in two dimensions, i.e., $d=2, \mathbf{x}=(x,y)$, with the initial data 
         \begin{align*}
            \phi(x,y,0) = \tanh\Big( \frac{1-\sqrt{(x-\pi)^2+(y-\pi)^2}}{\sqrt{2}\varepsilon} \Big).
        \end{align*}
        Then $\beta=1$ is fixed.
        Set the computational domain as $\Omega=[0,2\pi]^{2}$ with a $256\times 256$ uniform spatial grid. The stabilizing parameter of ETD is chosen as $S = 1/\varepsilon^2$.
    \end{example}

    \begin{table}[!ht]
        \centering
        \caption{Temporal accuracy test for the proposed schemes: $L^2$-errors under different $\tau=T/N$.}
        \resizebox{\textwidth}{!}{%
            \begin{tabular}{@{}cccccccccccccc@{}}
                \toprule
                \multirow{2}{*}{$N$} & \multicolumn{2}{c}{ETDRK1-PC} & \multicolumn{2}{c}{ETDRK2-PC} & \multicolumn{2}{c}{ETDRK3-PC} & \multicolumn{2}{c}{U-ETDRK3-PCC} & \multicolumn{2}{c}{U-ETDRK4-PCC} \\ 
                \cmidrule(lr){2-3} \cmidrule(lr){4-5} \cmidrule(lr){6-7} \cmidrule(lr){8-9} \cmidrule(lr){10-11}
                & $L^2$ error & rate & $L^2$ error & rate & $L^2$ error & rate & $L^2$ norm & rate & $L^2$ norm & rate \\ 
                \midrule
                50  & 1.05E-1 & -   & 1.52E-2 & -   & 2.26E-3 & -   & 1.08E-3 & -   & 3.57E-5 & - \\
                100 & 5.48E-2 & 0.94 & 4.26E-3 & 1.83 & 3.42E-4 & 2.72 & 1.58E-4 & 2.77 & 2.61E-6 & 3.77 \\
                200 & 2.80E-2 & 0.97 & 1.13E-3 & 1.91 & 4.74E-5 & 2.85 & 2.15E-5 & 2.88 & 1.77E-7 & 3.88 \\
                400 & 1.42E-2 & 0.98 & 2.91E-4 & 1.96 & 6.25E-6 & 2.92 & 2.80E-6 & 2.94 & 1.12E-8 & 3.97 \\
                800 & 7.13E-3 & 0.99 & 7.19E-5 & 2.01 & 8.03E-7 & 2.96 & 3.58E-7 & 2.97 & 4.30E-10 & 4.71 \\
                \bottomrule
            \end{tabular}%
        }
        \label{tab:accuracy_test}
    \end{table}
    
    \begin{figure}[!ht] 
        \centering
        \includegraphics[width=0.43\textwidth]{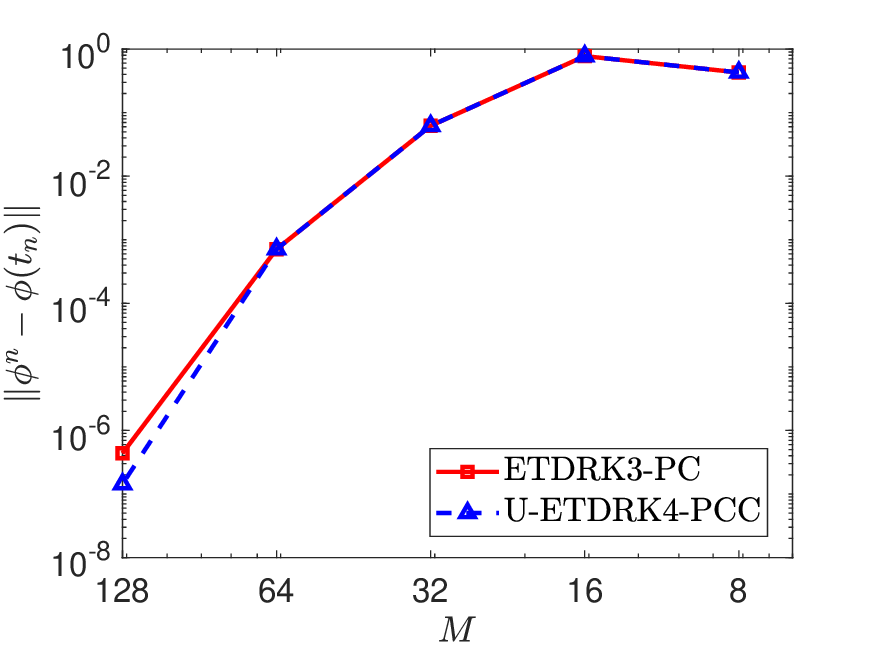} 
        \caption{(Example \ref{Ex 1}) 
         Spatial error of proposed schemes under  $M\times M$ uniform grid.}
        \label{fig:ex1_loglog_err}
    \end{figure}

    \begin{figure}[!ht] 
        \centering
        \includegraphics[width=0.24\textwidth, height=0.2\textwidth]{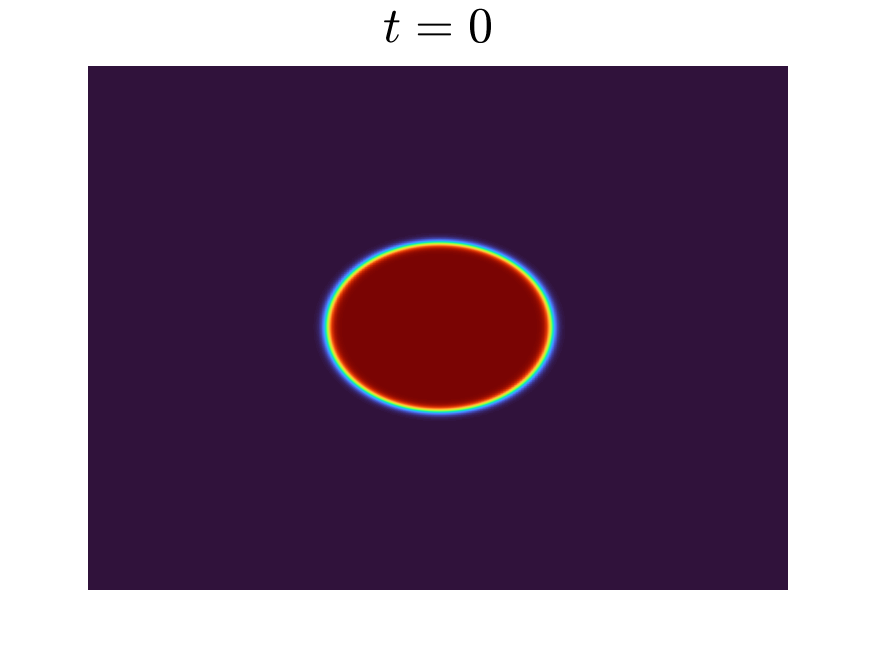} 
        \includegraphics[width=0.24\textwidth, height=0.2\textwidth]{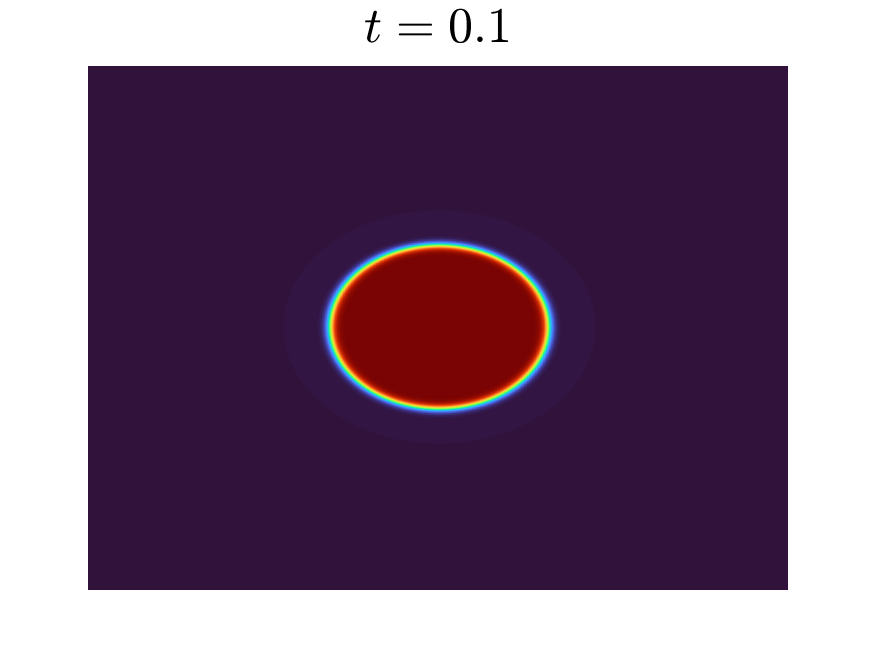} 
        \includegraphics[width=0.24\textwidth, height=0.2\textwidth]{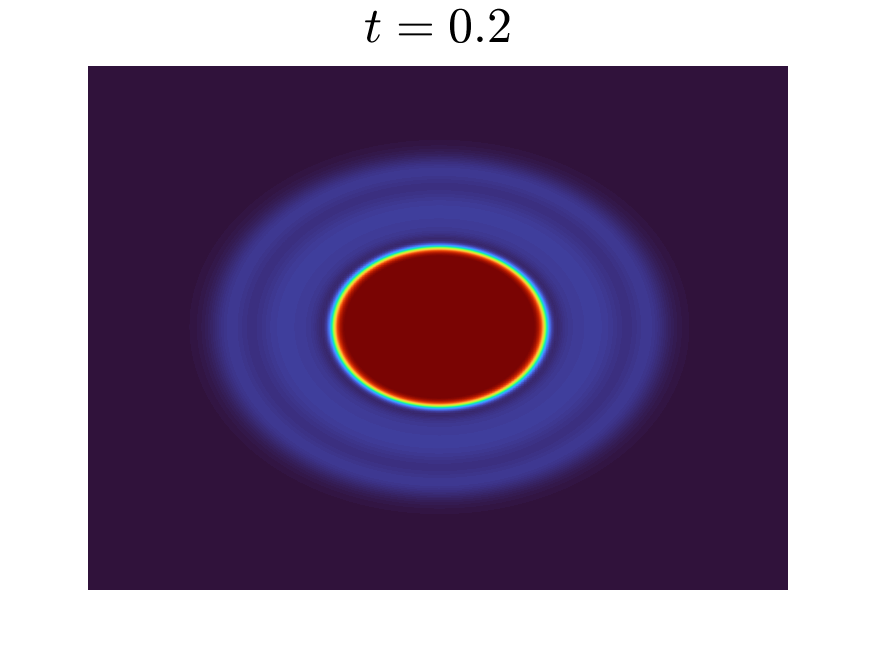} 
        \includegraphics[width=0.24\textwidth, height=0.2\textwidth]{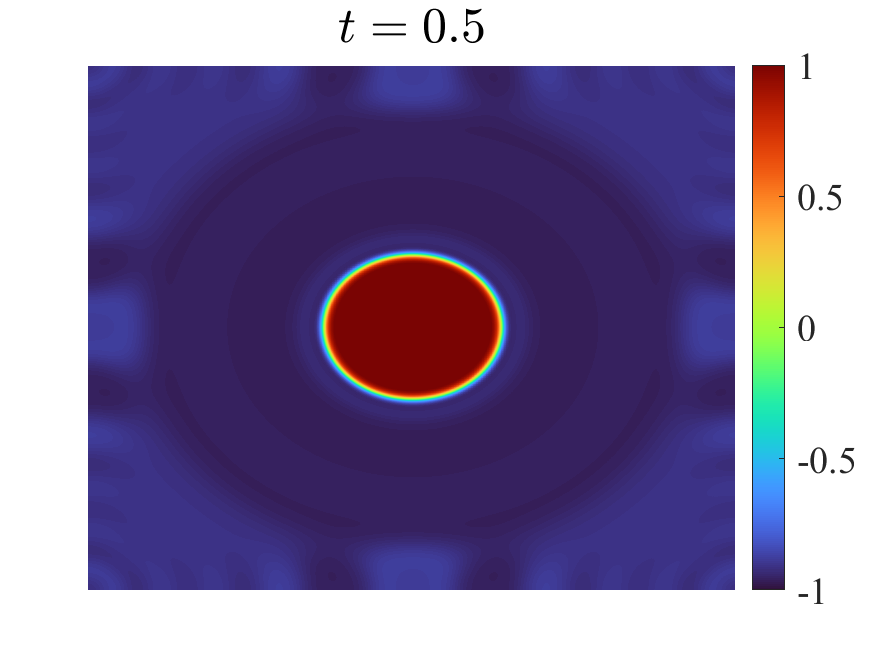}
        \\
        \includegraphics[width=0.24\textwidth, height=0.2\textwidth]{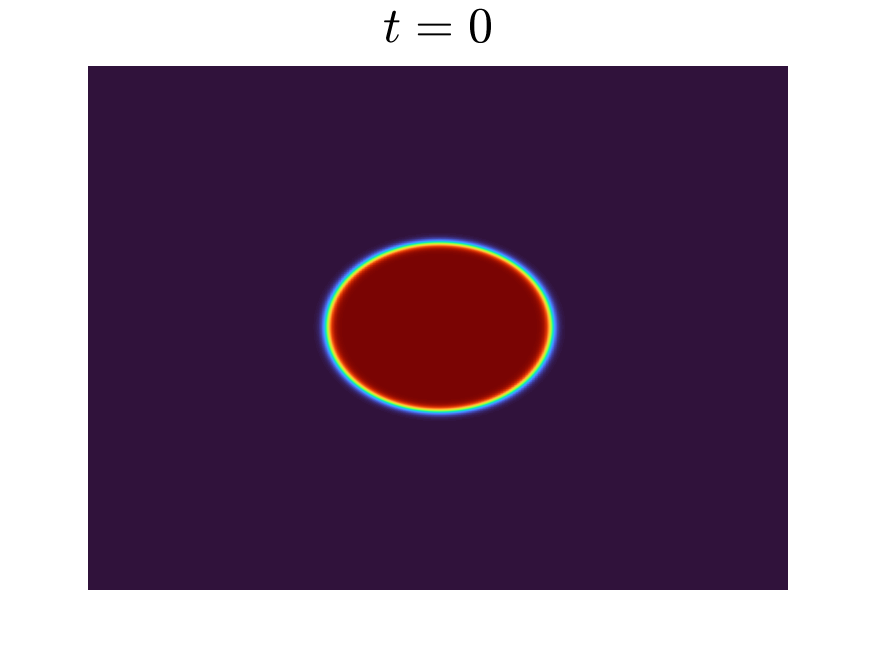} 
        \includegraphics[width=0.24\textwidth, height=0.2\textwidth]{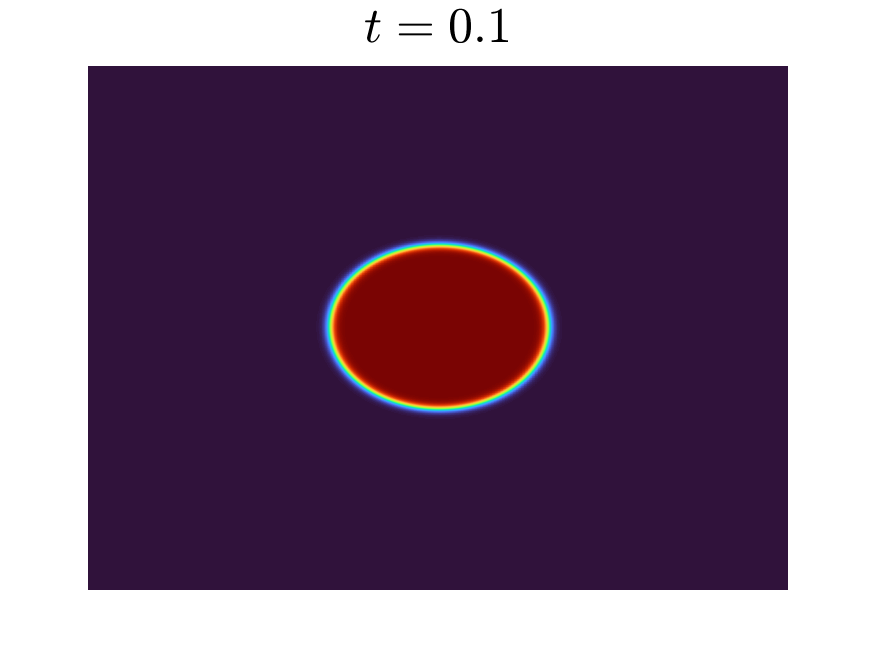} 
        \includegraphics[width=0.24\textwidth, height=0.2\textwidth]{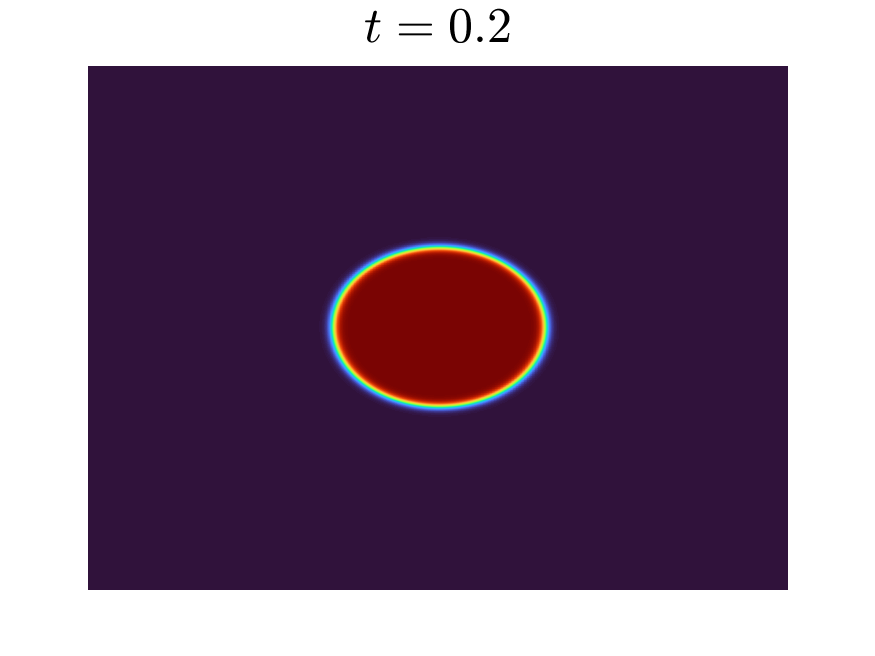} 
        \includegraphics[width=0.24\textwidth, height=0.2\textwidth]{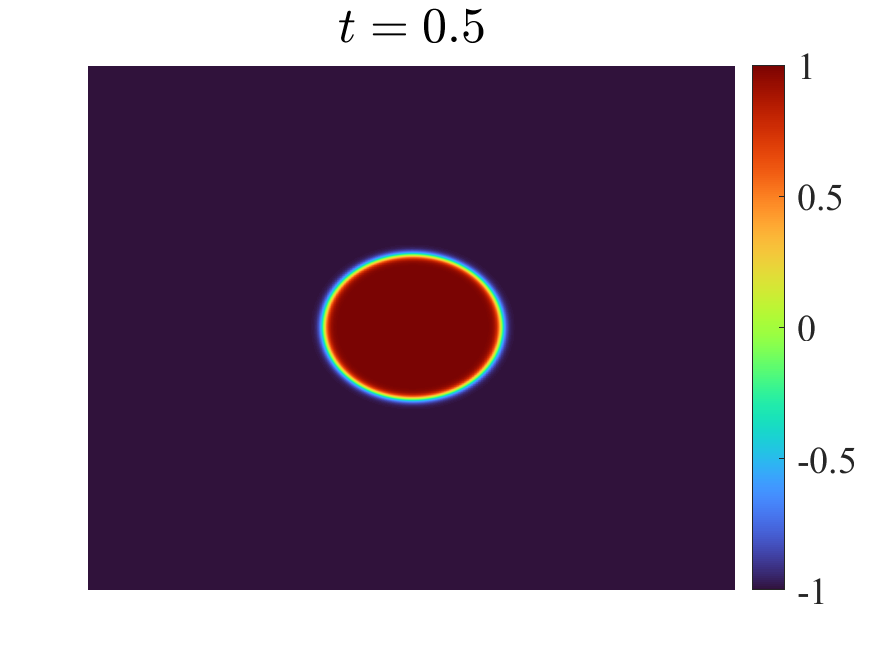} 
        \caption{(Example \ref{Ex 1}) Contour plots of the numerical solution $\phi^n$ at $t = 0$, $0.1$, $0.2$, $0.5$ with $\tau = 0.01$. 1st row: U-ETDRK3; 2nd row: U-ETDRK3-PCC.}
        \label{fig:ex1_snapshot s=3}
    \end{figure}
     \begin{figure}[h!] 
        \centering
        \subfloat[]{%
    \includegraphics[width=0.31\textwidth]{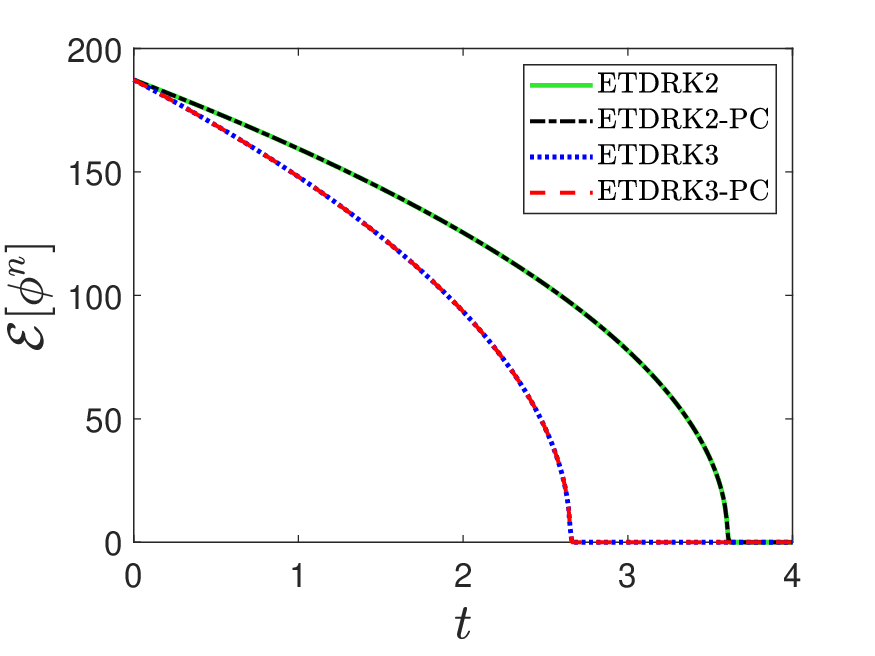} 
            \label{fig:energy_s2}
        }
        \subfloat[]{%
            \includegraphics[width=0.31\textwidth]{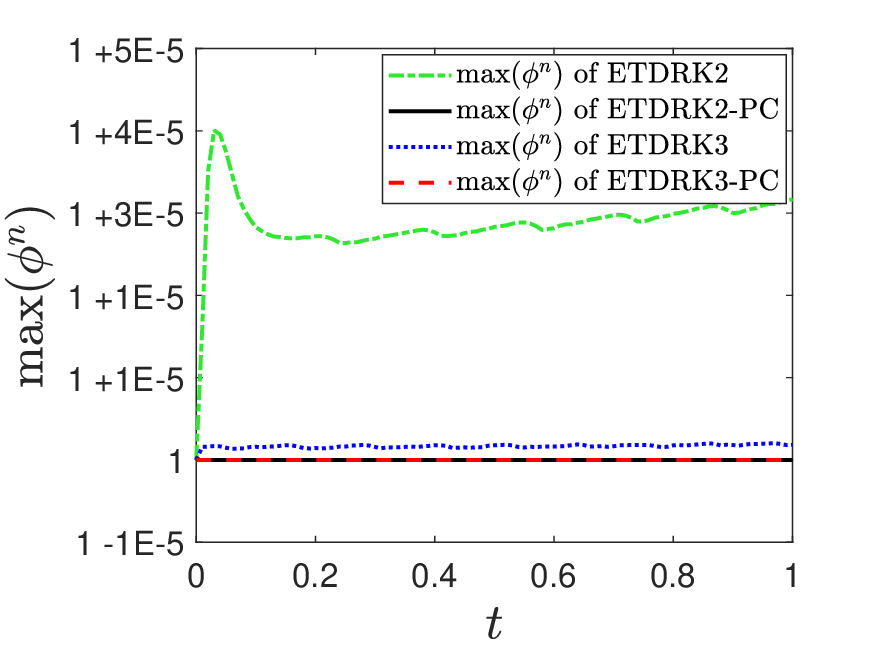} 
            \label{fig:boundary_s2}
        }
        \subfloat[]{%
            \includegraphics[width=0.31\textwidth]{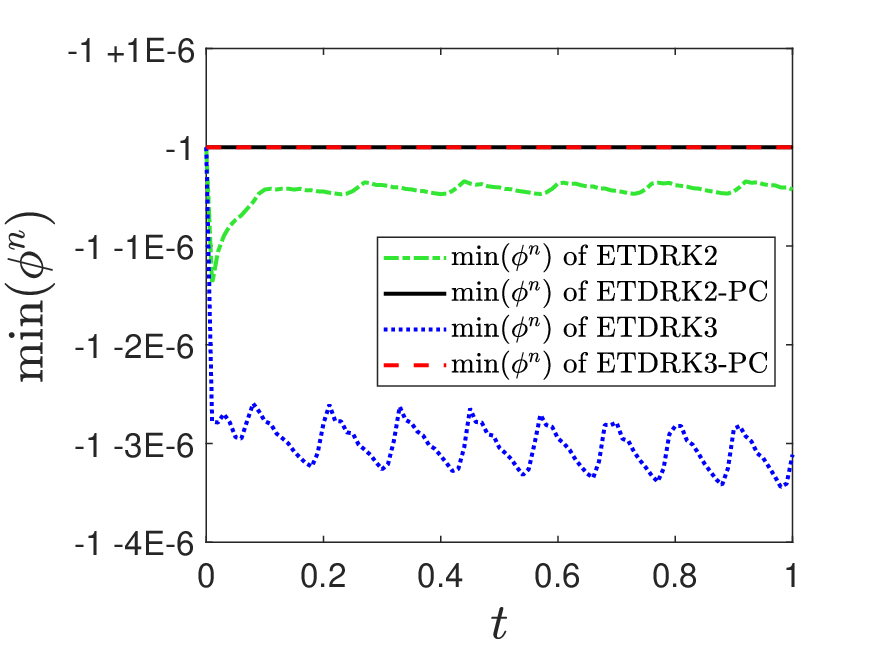} 
            \label{fig:Lboundary_s2}
        }
        \caption{(Example \ref{Ex 1}) Time evolutions of energy, upper and lower bounds of the numerical solution $\phi^{n}$ from ETDRK and ETDRK-PC schemes with  $\tau = 0.01$.}
        \label{fig:test add}
    \end{figure}
    
    \begin{figure}[h!] 
        \centering
        \includegraphics[width=0.32\textwidth]{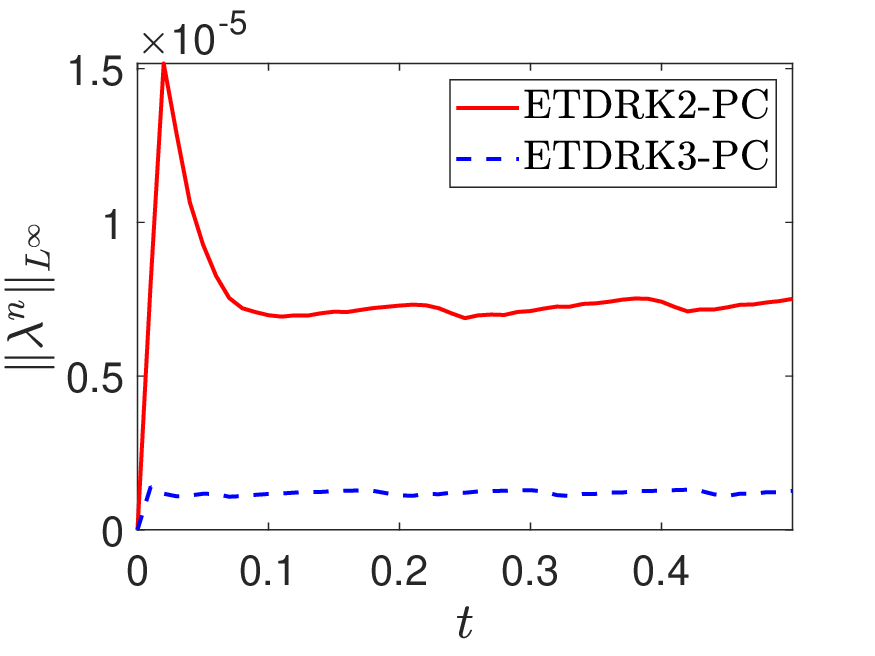}
        \includegraphics[width=0.32\textwidth]{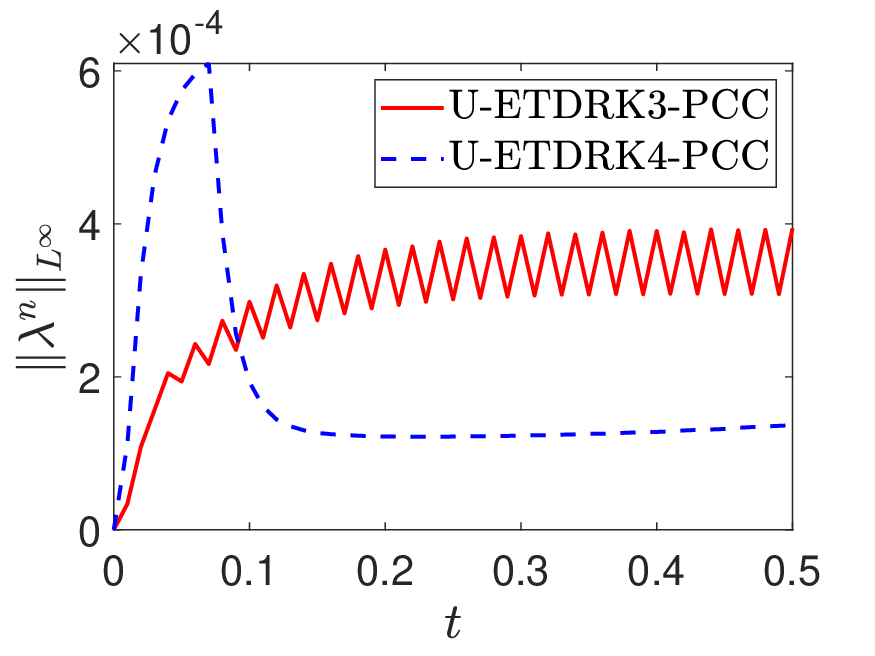}
        \includegraphics[width=0.32\textwidth]{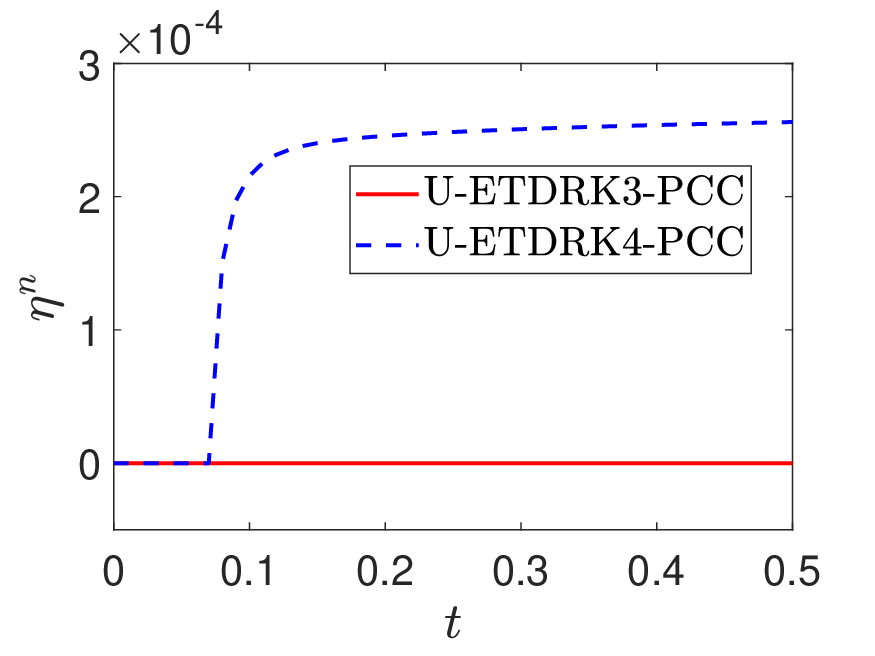} 
        \caption{(Example \ref{Ex 1}) Values of $\|\lambda^{n}\|_{L^\infty}$ and $\eta^{n}$ during the computations for different schemes with $\tau = 0.01$.}
        \label{fig:ex1_auxpara}
    \end{figure}
    
    We solve Example \ref{Ex 1} by the presented numerical methods. Firstly, we validate the optimal convergence order of the concerned schemes.  
    To do so, we fix $\varepsilon^2 = 0.01$ and measure the (absolute) error of the solution in the $L^{2}$-norm under different time step $\tau=T/N$ at $t = T = 0.1$, where the reference solution is computed by the U-ETDRK4-PCC scheme with $\tau=10^{-4}$. Table \ref{tab:accuracy_test} presents the temporal errors of PC and PCC schemes, where the spatial errors ($256\times256$ grid) are negligible in the setup. The numerical results show that all the schemes tested can achieve the optimal convergence rates as their plain versions. In particular, the theoretical result in Theorem~\ref{thm. convergency} is confirmed.     
    The spatial spectral accuracy of PCC and PC are observed in Fig.~\ref{fig:ex1_loglog_err}, where $N = 1000$ is fixed.

    Then, we test the structure-preserving effect of the proposed strategy. Here, $\varepsilon^{2}=0.001$ is fixed. 
    The behavior of the numerical energy and the extreme values of the solutions of U-ETDRK and U-ETDRK-PCC have already been given in Fig. \ref{fig:test}, which validates the energy stability and MBP resulting from PCC. 
    Fig. \ref{fig:ex1_snapshot s=3} further shows the contour plots of the numerical solution $\phi^n$ obtained from U-ETDRK3 or U-ETDRK3-PCC at different time. From these plots, it is evident that U-ETDRK3 exhibits significant spurious oscillations, whereas U-ETDRK3-PCC gives the desired simulation.     
    In addition, we illustrate the structure preservation of ETDRK-PC in Fig. \ref{fig:test add}. As shown in Fig.~\ref{fig:energy_s2}, the bound corrector of PC does not break the inherent energy stability of the plain scheme, and the energy decay trajectory of ETDRK-PC is nearly identical to that of ETDRK. Figs.~\ref{fig:boundary_s2}\&\ref{fig:Lboundary_s2} show that the energy stability of ETDRK does not implies MBP, and PC can fix the issue. At last, we show the values of the two Lagrange multipliers during the above computations in Fig. \ref{fig:ex1_auxpara}.

    \begin{example}\label{Ex 2}
        Consider (\ref{eq. gradient flow}) in three dimensions, i.e., $d=3, \mathbf{x}=(x,y,z)$, with the initial data 
        \begin{align}
            \phi(x,y,0) =& \tanh\Big( \frac{\frac{\pi}{6}-\sqrt{(x+\frac{\pi}{4})^2 + (y+\frac{\pi}{4})^2 + z^2 }}{\sqrt{2}\varepsilon} \Big) \nonumber\\
            & + \tanh\Big( \frac{\frac{\pi}{5}-\sqrt{(x+\frac{\pi}{4})^2 + (y-\frac{\pi}{4})^2 + z^2 }}{\sqrt{2}\varepsilon} \Big) \nonumber\\
            & + \tanh\Big( \frac{\frac{\pi}{6}-\sqrt{(x-\frac{\pi}{4})^2 + (y-\frac{\pi}{4})^2 + z^2 }}{\sqrt{2}\varepsilon} \Big) \nonumber\\
            & + \tanh\Big( \frac{\frac{\pi}{6}-\sqrt{x^2 + y^2 + (z-\frac{\pi}{3})^2 }}{\sqrt{2}\varepsilon} \Big)=:\phi_0(x,y)\label{phi0 def}
        \end{align}
        Then, $\beta=1$ is fixed.
        Set the computational domain as $\Omega=[-\pi,\pi]^{3}$ with a $128\times 128$ uniform spatial grid. Set $\varepsilon = 0.1$ and the stabilizing parameter of ETDRK is chosen as $S = 1/\varepsilon^2$.
    \end{example}

 \begin{figure}[!ht] 
       \centering
       \includegraphics[width=0.32\textwidth]{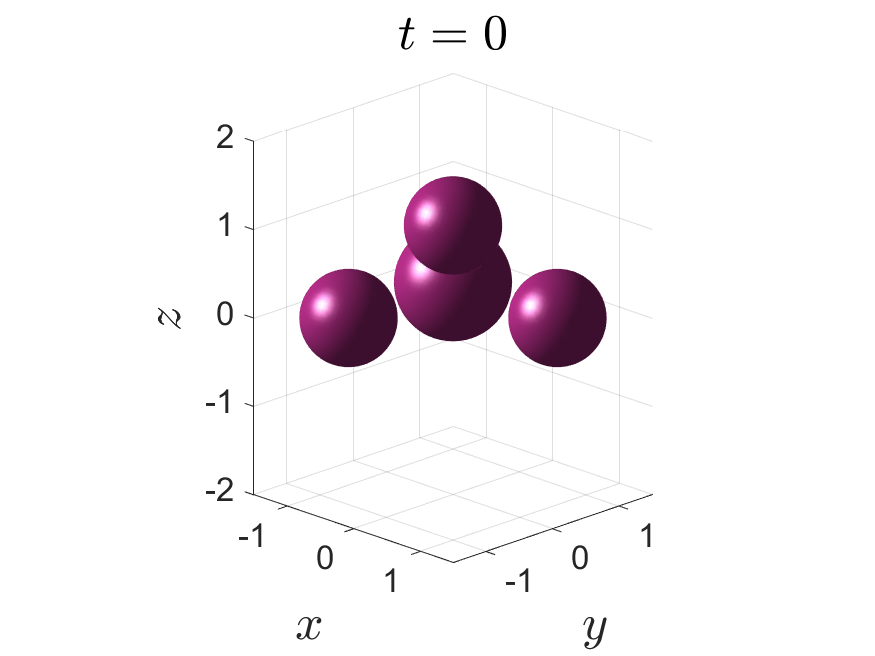} 
       \includegraphics[width=0.32\textwidth]{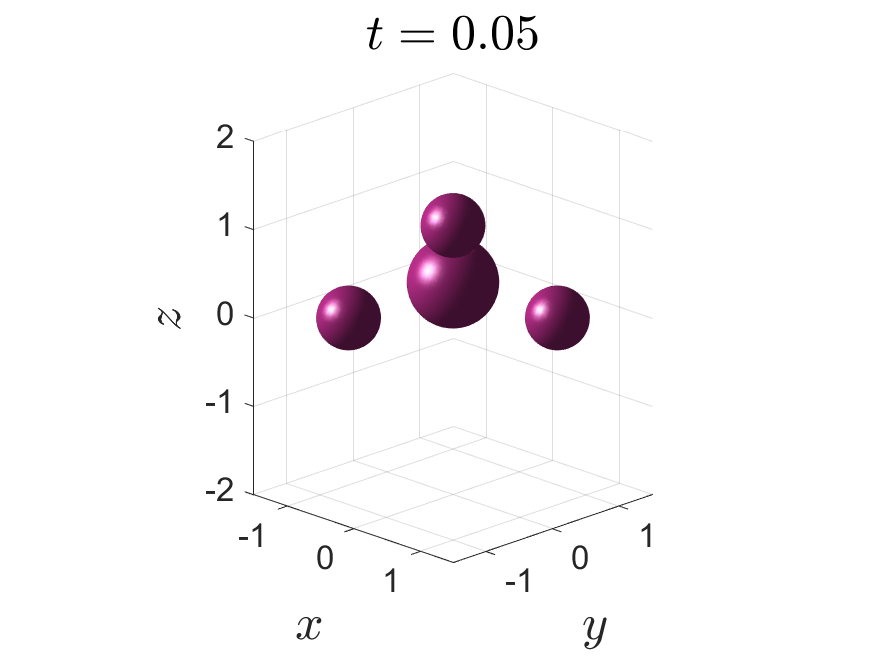} 
       \includegraphics[width=0.32\textwidth]{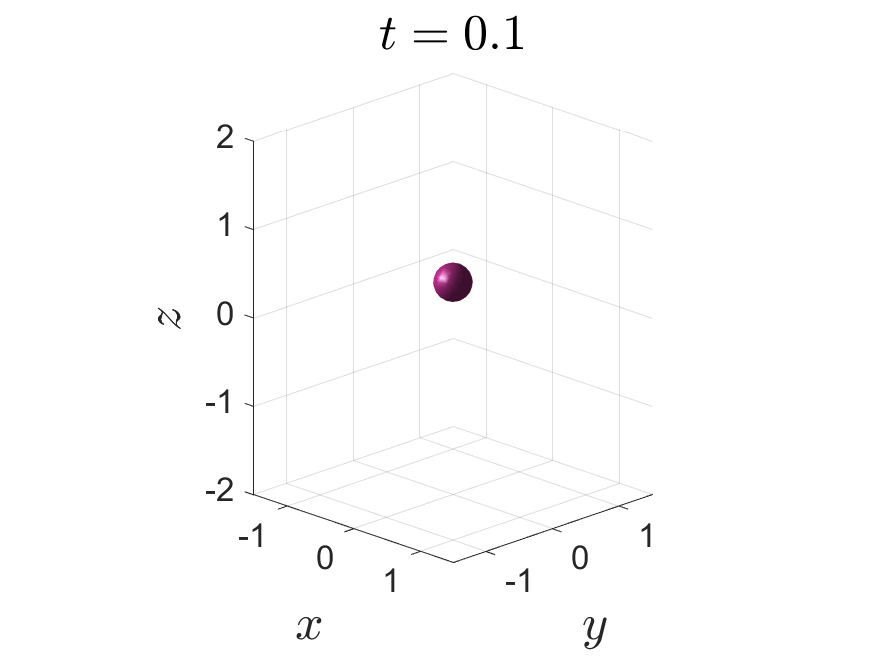}
       \caption{(Example \ref{Ex 2}) Snapshots of the numerical solution from U-ETDRK3-PCC: the iso-surfaces plots correspond to $\phi^{n} = 0$ at time $t= 0$, $0.05$ and $0.1$ under $\tau = 0.01$.} \label{fig:ex2_snapshot s=3}
   \end{figure}
    
    \begin{figure}[h!] 
      \centering
       \includegraphics[width=0.4\textwidth]{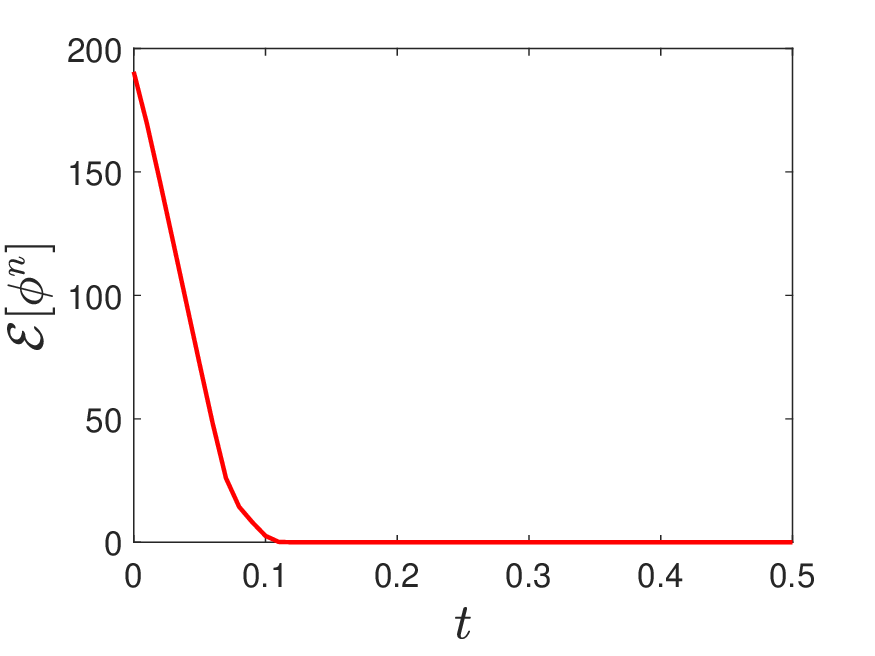}
       \includegraphics[width=0.4\textwidth]{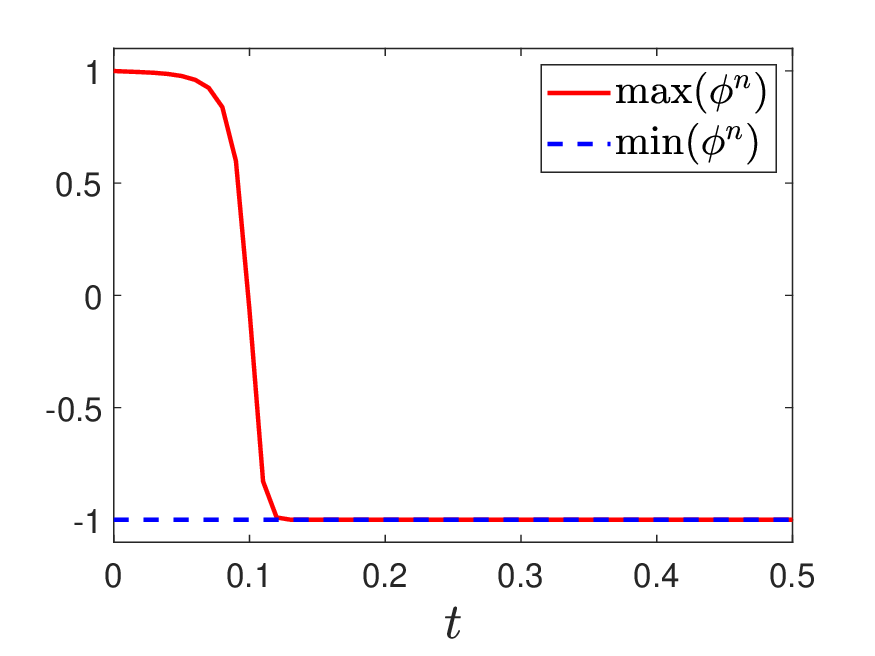}
       \caption{(Example \ref{Ex 2}) Time evolutions of energy, upper and lower bounds of the numerical solution $\phi^{n}$ from U-ETDRK3-PCC scheme with  $\tau = 0.01$.}
       \label{fig:ex2_stability}
  \end{figure}
    
    In Fig.~\ref{fig:ex2_snapshot s=3}, we display the iso-surfaces plot of the numerical solution computed by U-ETDRK3-PCC for Example~\ref{Ex 2} at various time points.  Fig.~\ref{fig:ex2_stability} demonstrates the structure preservation of our method. As illustrated, the PCC approach effectively maintains both MBP and energy stability in the three-dimensional case.

    %

    \subsection{Cahn-Hilliard equation}
    
    Next, we consider the Cahn-Hilliard equation, i.e., $\mathcal{G} = -\Delta$ in (\ref{eq. gradient flow}), with the Flory-Huggins potential function:
    \begin{align*}
        F(\phi) = \frac{1}{\varepsilon^2} \left[ (\beta + \phi) \operatorname*{ln}(\beta + \phi) + (\beta - \phi) \operatorname*{ln}(\beta - \phi) - \frac{\theta_{0}}{2} \phi^2 \right],
    \end{align*}
    where $\varepsilon,\,\beta$ and $\theta_{0}$ are  positive given parameters. If initially  $\left\|\phi(\cdot,0)\right\|_{L^{\infty}}$ is uniformly below $\beta$, then it is expected that the solution of the Cahn-Hilliard equation remains so in dynamics, i.e., $-\beta + \delta\leq \phi\leq\beta - \delta$ for some $\delta \in (0, \beta)$ \cite{Debussche1995CahnHilliard, Elliott1996CahnHilliard}.

    \begin{example} \label{Ex 3}
        Consider a two-dimensional case on the computational domain $\Omega=[0,2\pi]^{2}$ with a $128\times 128$ spatial grid. The  parameters are set as
        $\beta = 1$, $\varepsilon = 0.1$, $S = 1/\varepsilon^2$, $\theta_{0}=3$. 
        The initial data for the flow (\ref{eq. gradient flow}) is chosen as
        \begin{align*}
            \phi(x,y,0) = 0.2 + 0.05\operatorname*{Rand}(x,y),
        \end{align*}
        where  $\operatorname*{Rand}(x,y)$ denotes a random function with values uniformly distributed in $(-1,1)$.
    \end{example}

    \begin{figure}[h!] 
        \centering
        \includegraphics[width=0.32\textwidth]{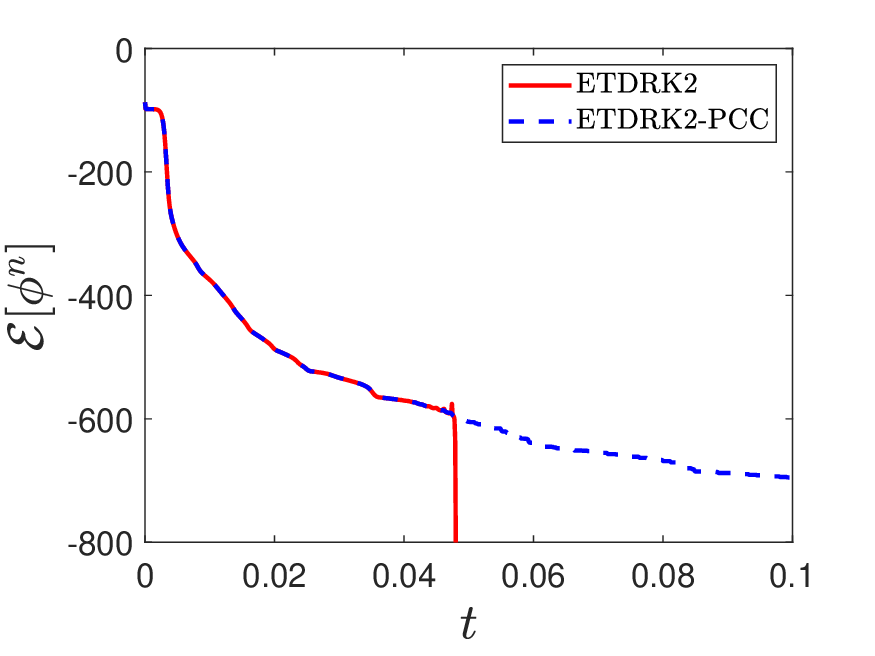}
        \includegraphics[width=0.32\textwidth]{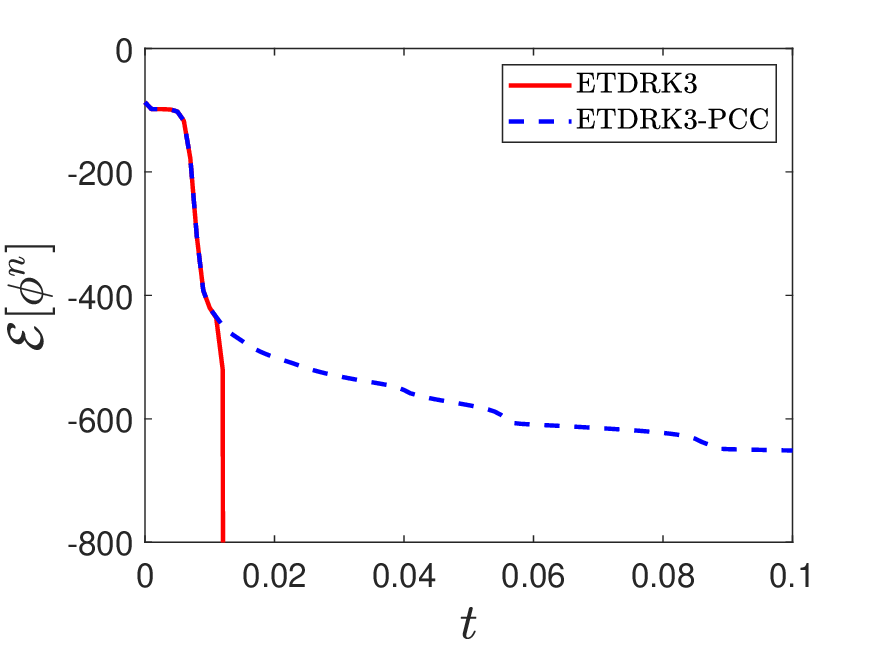} 
        \includegraphics[width=0.32\textwidth]{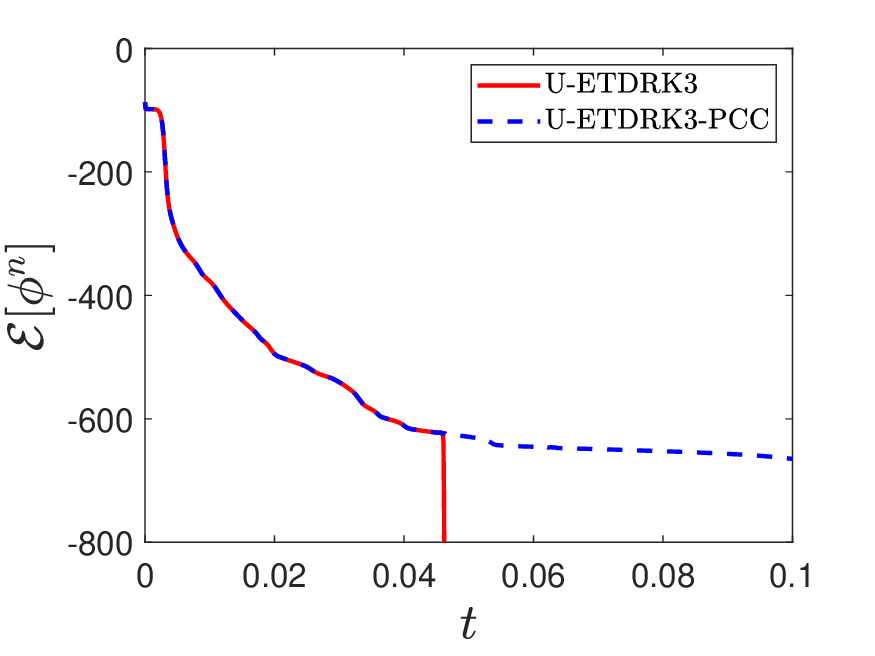}\\
        \includegraphics[width=0.32\textwidth]{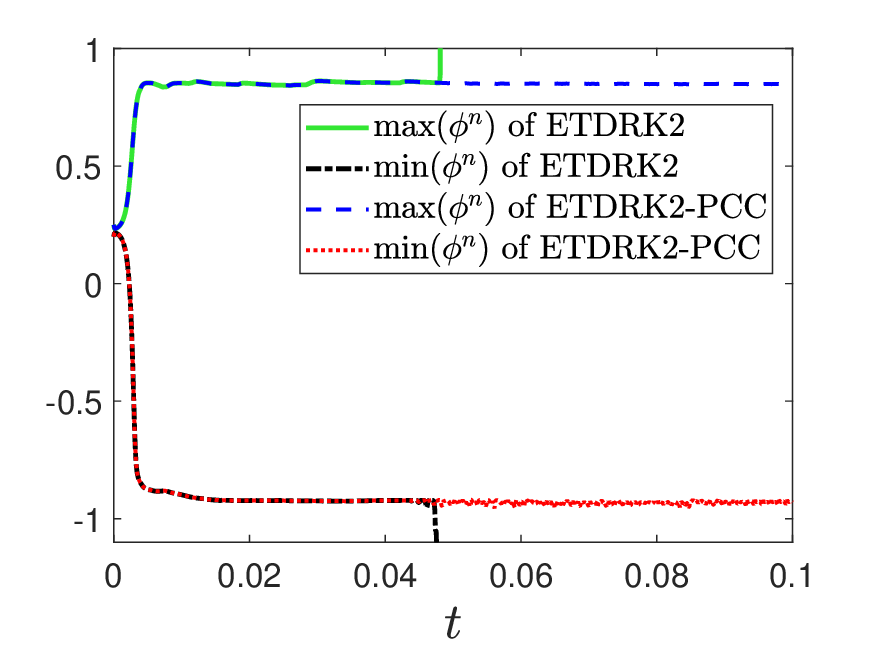}
        \includegraphics[width=0.32\textwidth]{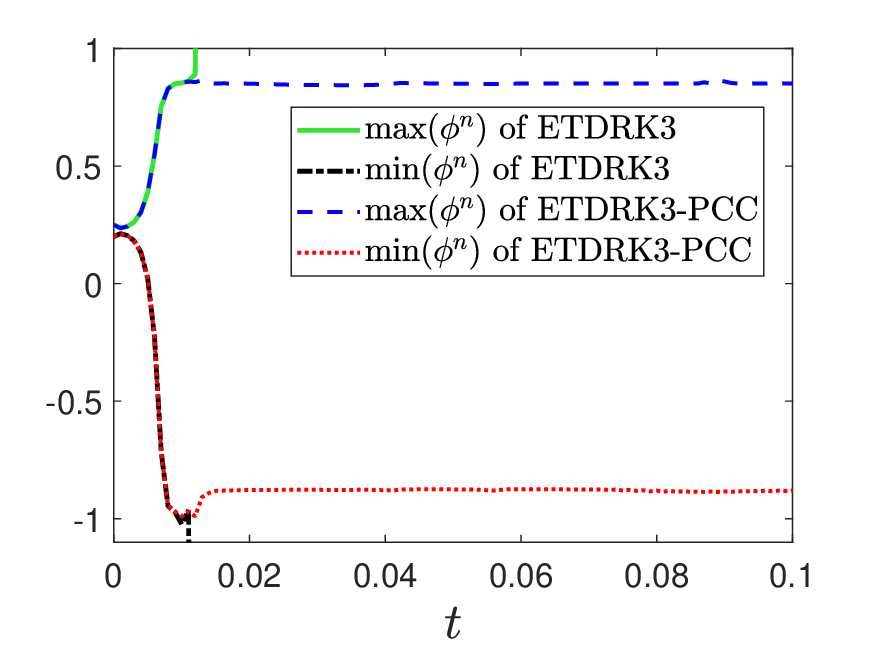} 
        \includegraphics[width=0.32\textwidth]{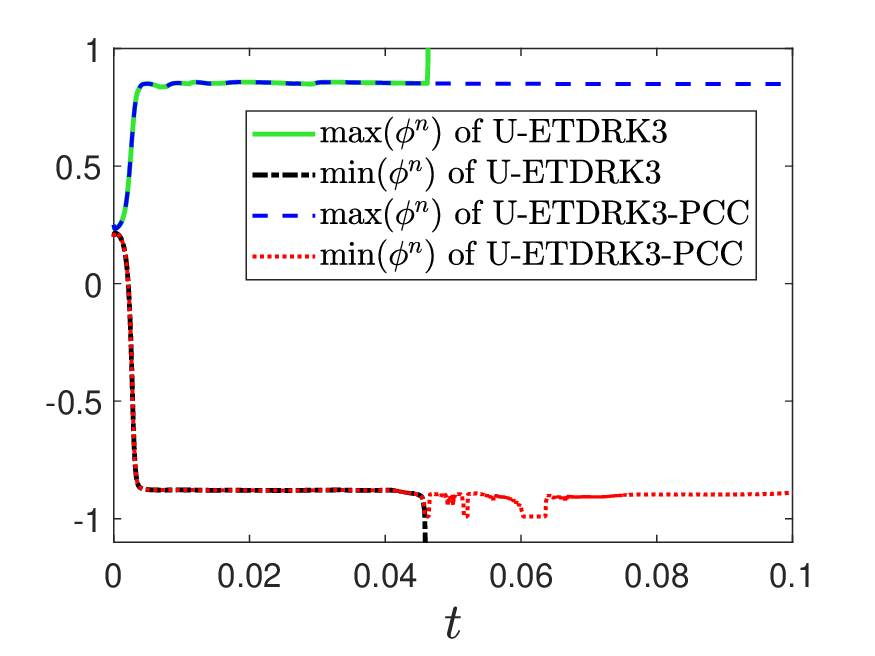}
        \caption{(Example \ref{Ex 3}) Evolution of $\mathcal{E}[\phi^{n}]$ (1st row) and extreme values of numerical solutions (2nd row) in different schemes. Left column: $S=1/{\varepsilon^2}$ and $\tau=10^{-4}$; Middle column: $S=1.2/{\varepsilon^2}$ and $\tau=10^{-3}$; Right column: $S=1.75/\varepsilon^2$ and $\tau=10^{-4}$.}
        \label{fig:ex3_Boundary_stability}
    \end{figure}
    
    \begin{figure}[h!] 
        \centering
        \includegraphics[width=0.24\textwidth, height=0.2\textwidth]{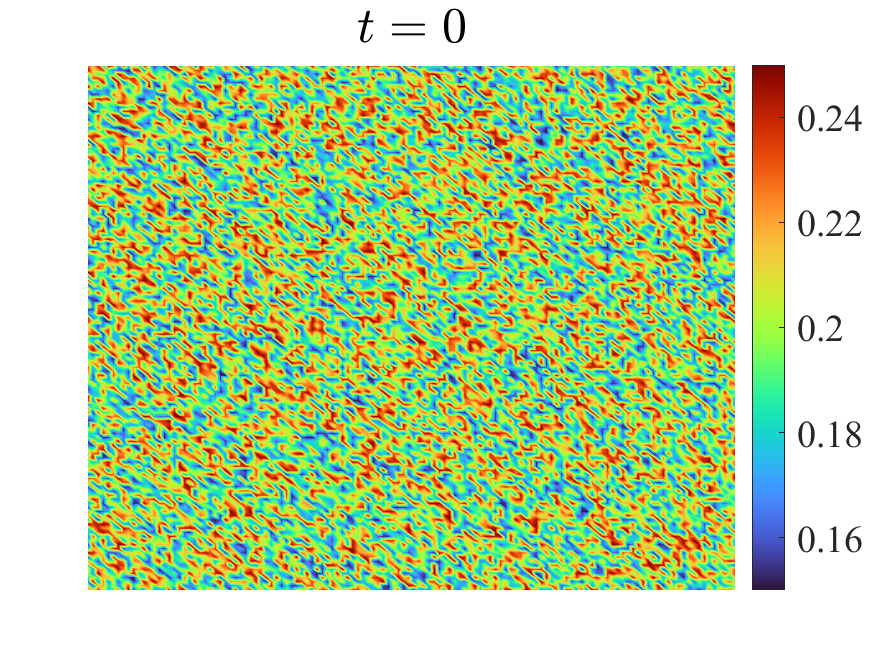} 
        \includegraphics[width=0.24\textwidth, height=0.2\textwidth]{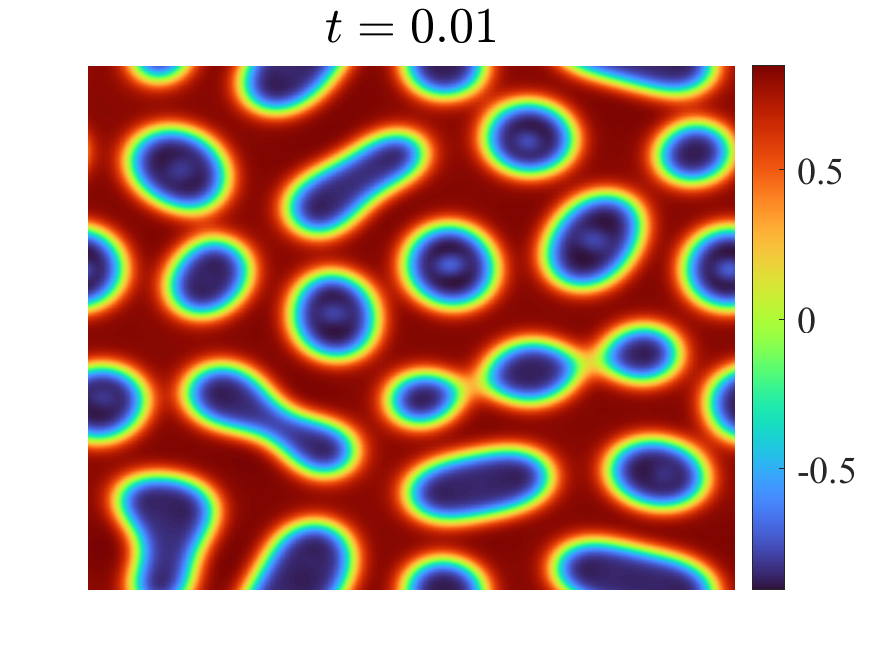} 
        \includegraphics[width=0.24\textwidth, height=0.2\textwidth]{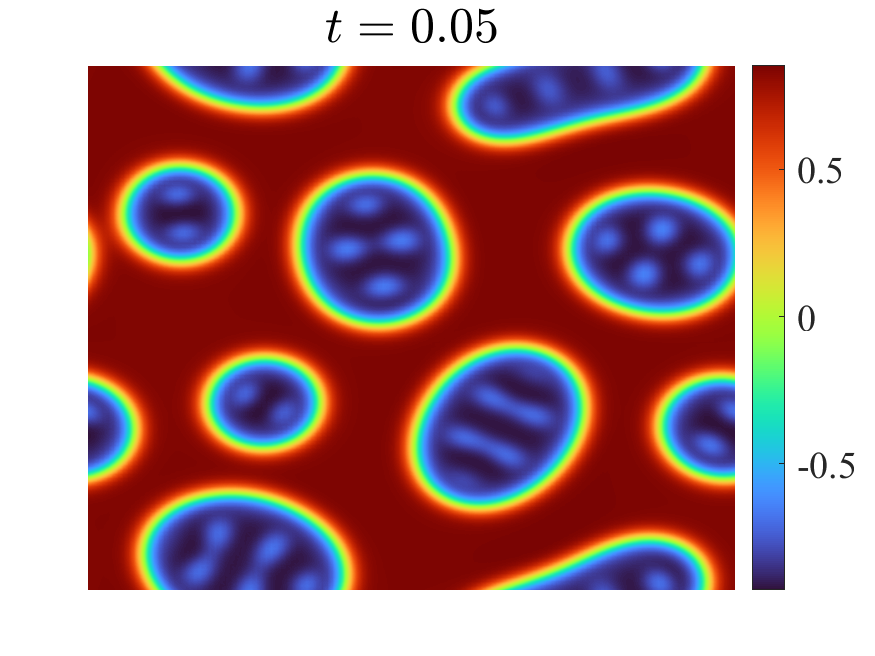} 
        \includegraphics[width=0.24\textwidth, height=0.2\textwidth]{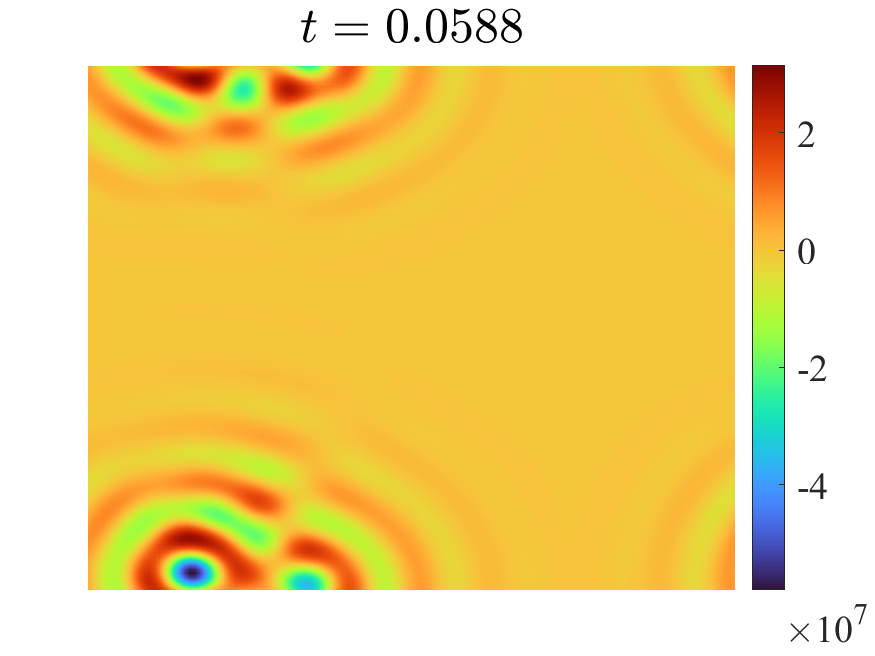}
        \\
        \includegraphics[width=0.24\textwidth, height=0.2\textwidth]{Figures/ex3_snapshot_0_ETDRK2_PC.eps} 
        \includegraphics[width=0.24\textwidth, height=0.2\textwidth]{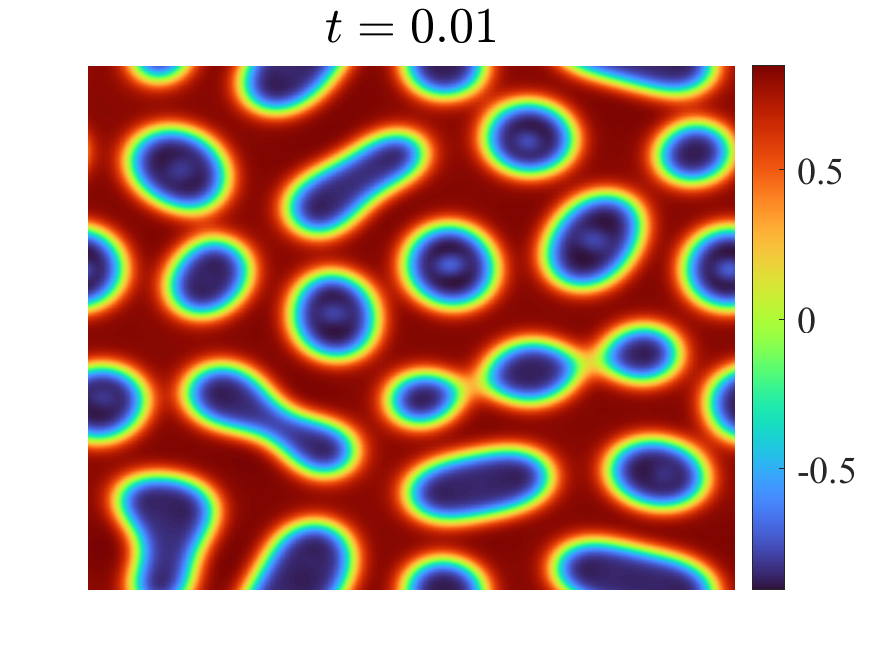} 
        \includegraphics[width=0.24\textwidth, height=0.2\textwidth]{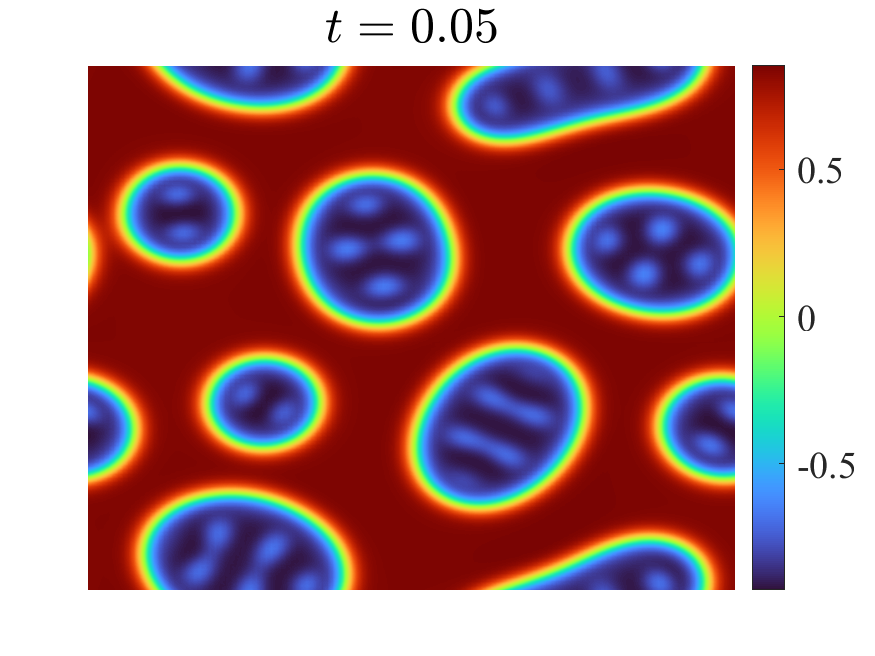} 
        \includegraphics[width=0.24\textwidth, height=0.2\textwidth]{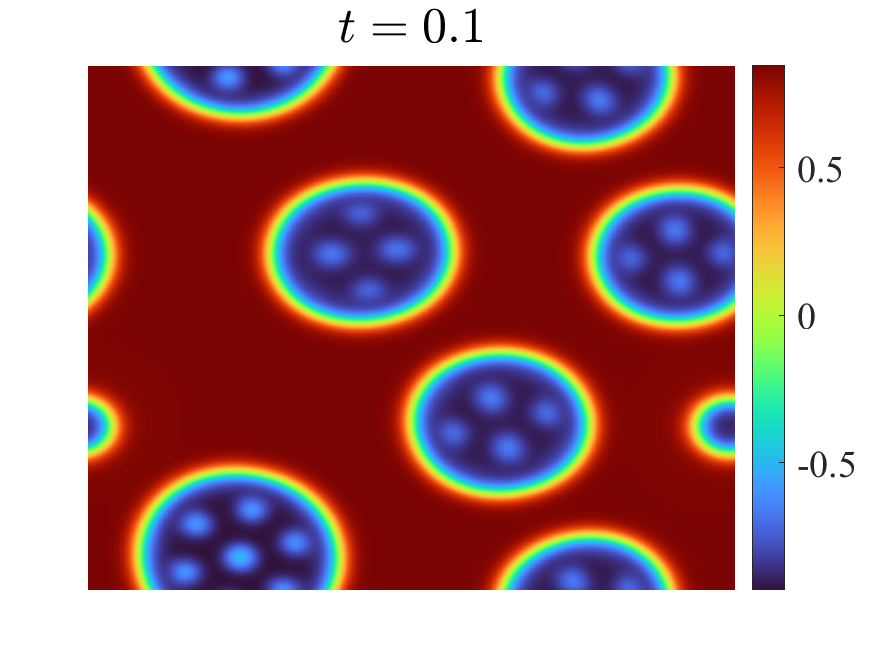}
        \caption{(Example \ref{Ex 3}) Contour plots of the numerical solution $\phi^n$ at different time with $S=1/\varepsilon^2$ and $\tau=10^{-4}$. 1st row: ETDRK2; 2nd row: ETDRK2-PCC.}
        \label{fig:ex3_snapshot s=2}
    \end{figure}
    
    Numerically, we solve this example by the proposed PCC with the bound `$\beta$' in \eqref{fml. compute phi star} for Corrector 3 set as $\pm(\beta-\delta)$ and  $\delta=0.01$.
    Theoretically, for the Flory-Huggins potential to satisfy the condition in Assumption~\ref{ass. f Lipschitz}, the value of $C_{L}$ tends to be quite large, which imposes stringent requirements on $S$. 
    When a relatively small value is selected for $S$, the original ETDRK scheme could exhibit a noticeable energy instability and a numerical blow-up may occur. In contrast, our proposed PCC scheme is capable of maintaining energy stability while preserving the MBP. These are well illustrated by the numerical results presented in Figs.~\ref{fig:ex3_Boundary_stability}--\ref{fig:ex3_snapshot s=2}. 
	
    \begin{example} \label{Ex 4}
       Consider a three-dimensional example with the initial data
        \begin{align*}
            \phi(x,y,0) =& \frac{1}{2}\phi_0(x,y),\mbox{ with $\phi_0$ defined in \eqref{phi0 def}}.
        \end{align*}
        The bound is set as $\pm(\beta-\delta)$ and  $\delta=0.01$. We consider the computational domain $\Omega=[-\pi,\pi]^{3}$ with the $128\times 128$ uniform spatial grids and choose  $\varepsilon = 0.1$. The stabilizing parameter of ETDRK scheme is chosen as $S = 5/\varepsilon^2$.
    \end{example}
	
    \begin{figure}[h!] 
        \centering
        \includegraphics[width=0.32\textwidth]{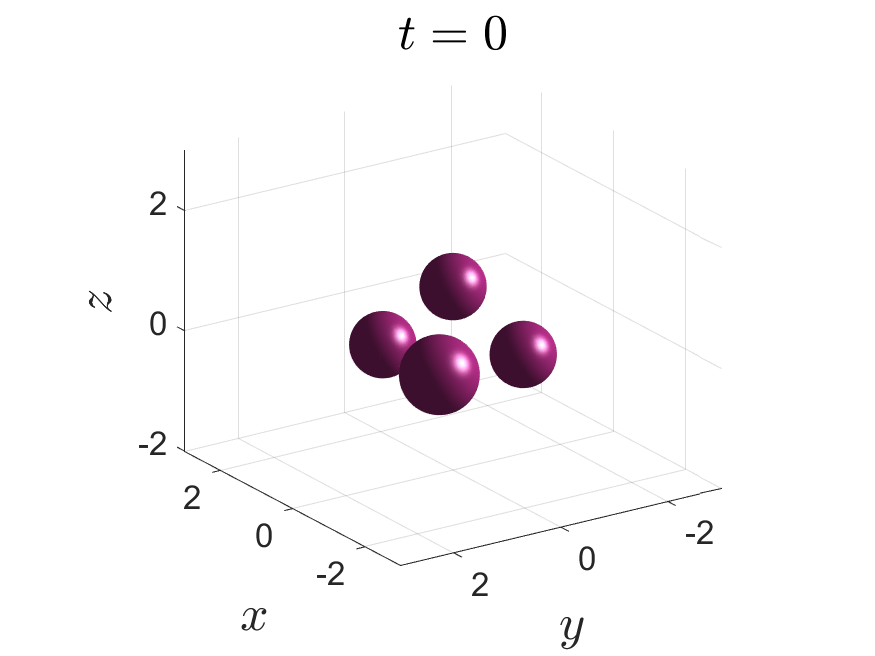} 
        \includegraphics[width=0.32\textwidth]{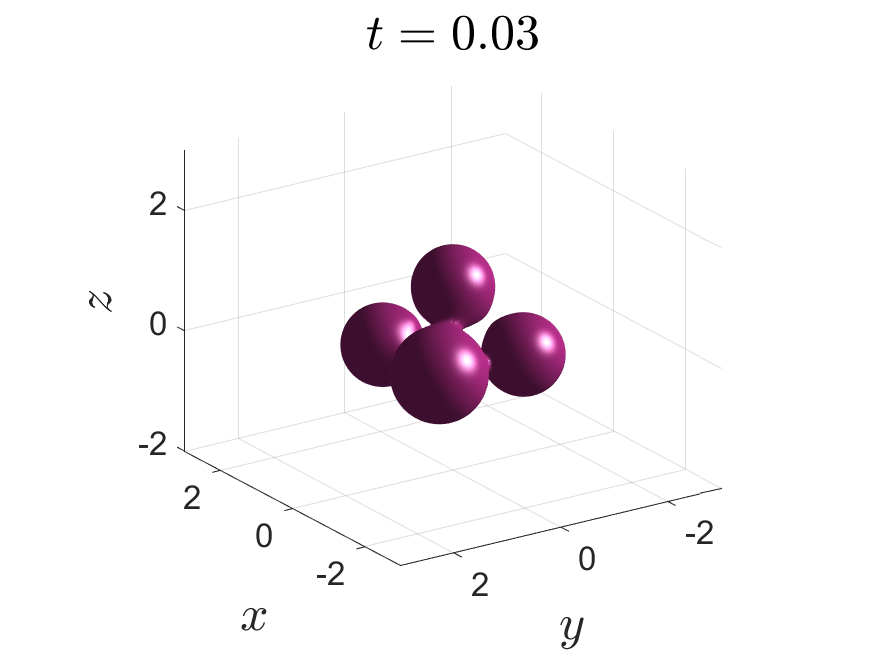} 
        \includegraphics[width=0.32\textwidth]{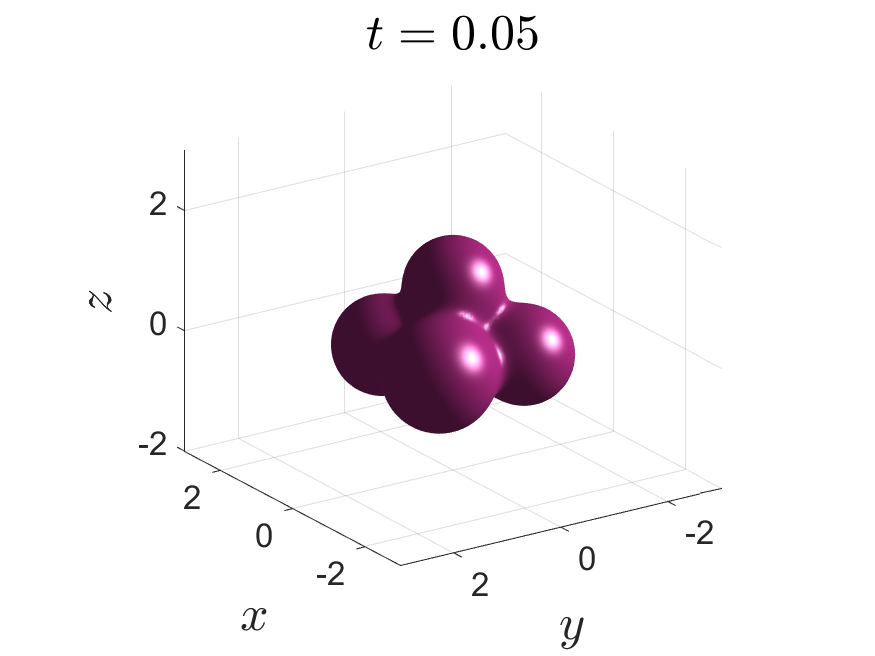} \\
        \includegraphics[width=0.32\textwidth]{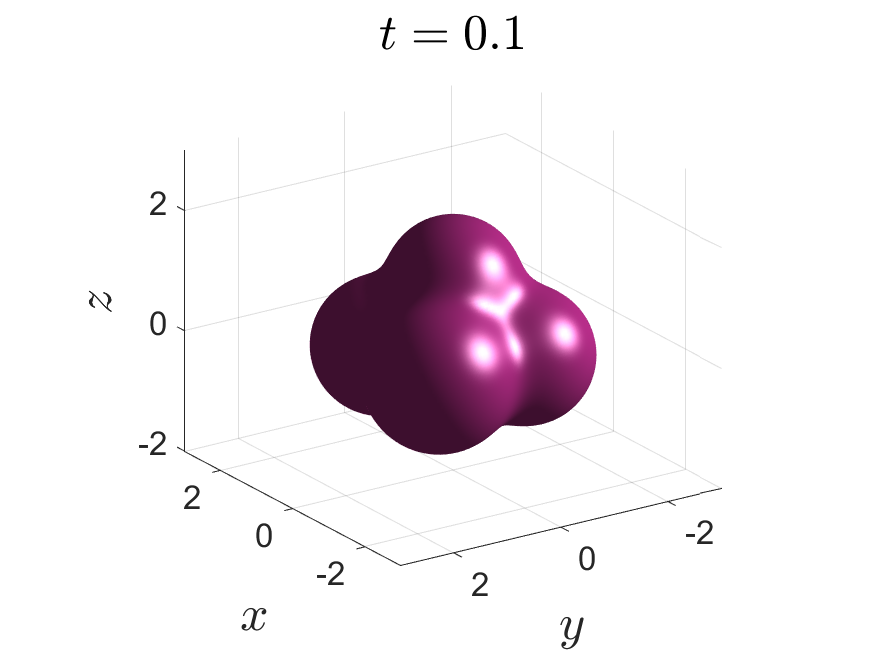}
        \includegraphics[width=0.32\textwidth]{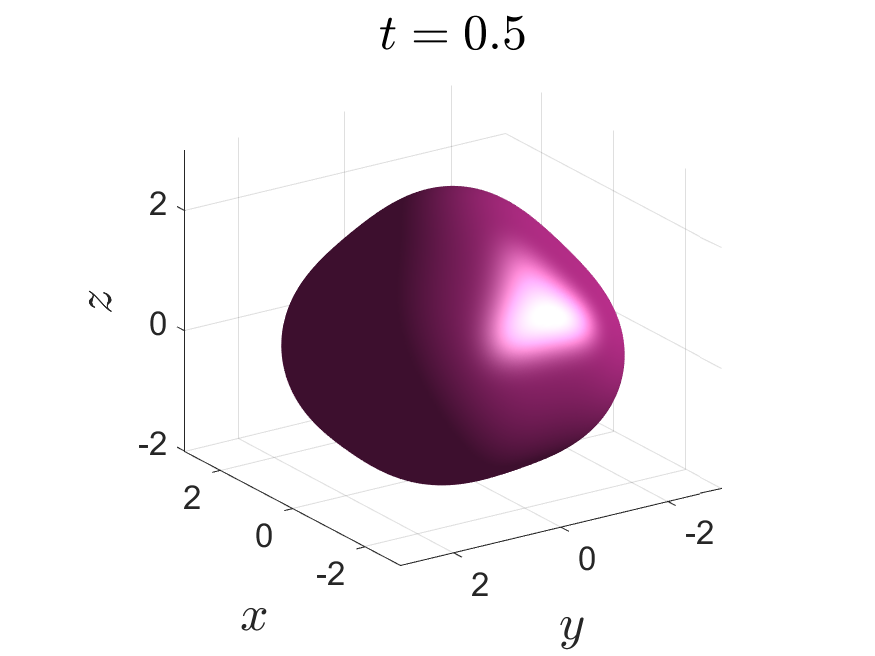}
        \caption{(Example \ref{Ex 4}) Snapshots of the numerical solution from U-ETDRK3-PCC: the iso-surfaces plots correspond to $\phi^{n} = 0$ at time  $t = 0$, $0.03$, $0.05$, $0.1$, and $0.5$ under $\tau = 0.01$.}
        \label{fig:ex4_snapshot s=3}
    \end{figure}
    
    \begin{figure}[h!] 
        \centering
        \includegraphics[width=0.4\textwidth]{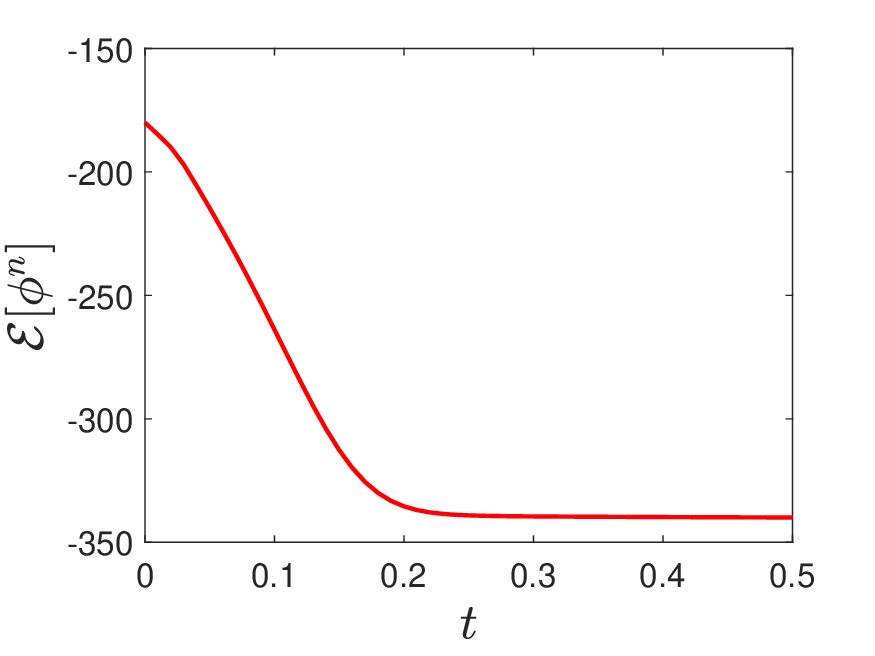}
        \includegraphics[width=0.4\textwidth]{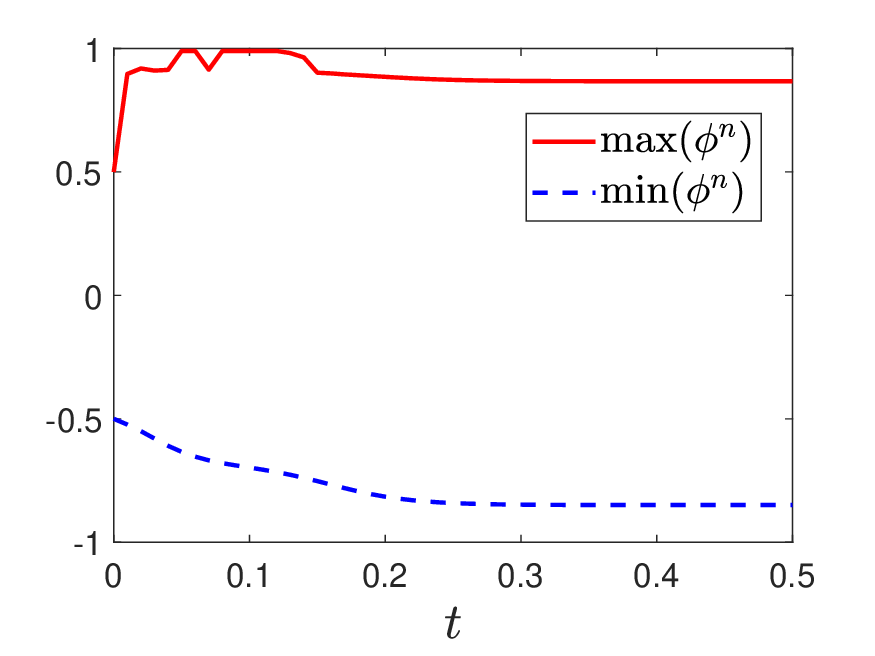}
        \caption{(Example \ref{Ex 4}) Time evolutions of energy, upper and lower bounds of the numerical solution $\phi^{n}$ from U-ETDRK3-PCC scheme with  $\tau = 0.01$.}
        \label{fig:ex4_stability}
    \end{figure}
    
    Fig.~\ref{fig:ex4_snapshot s=3} shows the dynamics obtained via U-ETDRK-PCC for Example~\ref{Ex 4}, and Fig.~\ref{fig:ex4_stability} demonstrates the desired bound and energy behavior. 
    
    \section{Conclusion} \label{sec. 6}
    For gradient flows,  the existing structure-preserving schemes are difficult to achieve arbitrary high-order accuracy in time while preserving maximum-principle (MBP) and energy dissipating simultaneously. In this work, we developed a novel framework for constructing arbitrary high-order structure-preserving schemes in time  which simultaneously preserving energy stability and bound-preserving.  The key idea is that we introduce a new KKT-condition to enforce the constraint of energy  inequality and obtain a new equivalent system to construct structure-preserving schemes for gradient flows. Then we establish a new framework for constructing energy dissipating and MBP-preserving schemes which can be combined with most existing numerical methods, for example, ETD Runge Kutta schemes, BDF schemes. The unique solvability analysis of our numerical schemes are given by rewriting the correction steps as optimization problems. Finally, we give a robust error analysis for the ETD Runge Kutta schemes based on our new framework. Enough numerical comparisons with the existing popular schemes are shown that our structure-preserving schemes can avoid numerical oscillations and capture the accurate evolution of energy for gradient flows.

\section{Acknowledgments}
The authors would like to thank Professor Jie Shen, Chun Liu and Weizhu Bao for their valuable suggestions and assistance.

    \bibliographystyle{siamplain}
    {\bibliography{ref}}

\begin{thebibliography}{10}

\bibitem{Anderson1998Diffuse}
{\sc D.~M. Anderson, G.~B. McFadden, and A.~A. Wheeler}, {\em Diffuse-interface
  methods in fluid mechanics}, Annu. Rev. Fluid Mech., 30 (1998), pp.~139--165.

\bibitem{bergounioux1999primal}
{\sc M.~Bergounioux, K.~Ito, and K.~Kunisch}, {\em Primal-dual strategy for
  constrained optimal control problems}, SIAM J. Control Optim., 37 (1999),
  pp.~1176--1194.

\bibitem{ChenWangZhao}
{\sc R.~Chen, T.~Wang, and X.~Zhao}, {\em Improved scalar auxiliary variable
  schemes for original energy stability of gradient flows}, J. Sci. Comput.,
  103 (2025), p.~16.

\bibitem{Chen2017SecondOrder}
{\sc R.~Chen, X.~Yang, and H.~Zhang}, {\em Second order, linear, and
  unconditionally energy stable schemes for a hydrodynamic model of {smectic-A}
  liquid crystals}, SIAM J. Sci. Comput., 39 (2017), pp.~A2808--A2833.

\bibitem{Chen2019SecondOrder}
{\sc W.~Chen, W.~Feng, Y.~Liu, C.~Wang, and S.~M. Wise}, {\em A second order
  energy stable scheme for the {Cahn}-{Hilliard}-{Hele}-{Shaw} equations},
  Discrete Contin. Dyn. Syst. Ser. B, 24 (2019), pp.~149--182.

\bibitem{Cheng2020LagrangeMultiplier}
{\sc Q.~Cheng, C.~Liu, and J.~Shen}, {\em A new {L}agrange multiplier approach
  for gradient flows}, Comput. Methods Appl. Mech. Engrg., 367 (2020),
  p.~113070.

\bibitem{chengshen2022}
{\sc Q.~Cheng and J.~Shen}, {\em A new lagrange multiplier approach for
  constructing structure preserving schemes, {I}. positivity preserving},
  Computer Methods in Applied Mechanics and Engineering, 391 (2022), p.~114585.

\bibitem{cheng2022new}
{\sc Q.~Cheng and J.~Shen}, {\em A new {L}agrange multiplier approach for
  constructing structure preserving schemes, {II}. bound preserving}, SIAM J.
  Numer. Anal., 60 (2022), pp.~970--998.

\bibitem{cheng2025unique}
{\sc Q.~Cheng, J.~Shen, and C.~Wang}, {\em Unique solvability and error
  analysis of a scheme using the {L}agrange multiplier approach for gradient
  flows}, SIAM J. Numer. Anal., 63 (2025), pp.~772--799.

\bibitem{Debussche1995CahnHilliard}
{\sc A.~Debussche and L.~Dettori}, {\em On the {Cahn}--{Hilliard} equation with
  a logarithmic free energy}, Nonlinear Anal. Theory Methods Appl., 24 (1995),
  pp.~1491--1514.

\bibitem{du2019maximum}
{\sc Q.~Du, L.~Ju, X.~Li, and Z.~Qiao}, {\em Maximum principle preserving
  exponential time differencing schemes for the nonlocal allen--cahn equation},
  SIAM Journal on numerical analysis, 57 (2019), pp.~875--898.

\bibitem{du2021maximum}
{\sc Q.~Du, L.~Ju, X.~Li, and Z.~Qiao}, {\em Maximum bound principles for a
  class of semilinear parabolic equations and exponential time-differencing
  schemes}, SIAM Rev., 63 (2021), pp.~317--359.

\bibitem{Elliott1996CahnHilliard}
{\sc C.~M. Elliott and H.~Garcke}, {\em On the {Cahn}--{Hilliard} equation with
  degenerate mobility}, SIAM J. Math. Anal., 27 (1996), pp.~404--423.

\bibitem{Elliott1993GlobalDynamics}
{\sc C.~M. Elliott and A.~Stuart}, {\em The global dynamics of discrete
  semilinear parabolic equations}, SIAM J. Numer. Anal., 30 (1993),
  pp.~1622--1663.

\bibitem{facchinei2003finite}
{\sc F.~Facchinei and J.-S. Pang}, {\em Finite-dimensional variational
  inequalities and complementarity problems}, Springer, 2003.

\bibitem{Fraaije2003Model}
{\sc J.~Fraaije and G.~Sevink}, {\em Model for pattern formation in polymer
  surfactant nanodroplets}, Macromolecules., 36 (2003), pp.~7891--7893.

\bibitem{fu2024higher}
{\sc Z.~Fu, J.~Shen, and J.~Yang}, {\em Higher-order energy-decreasing
  exponential time differencing {Runge}-{Kutta} methods for gradient flows},
  Sci. China Math.,  (2024), pp.~1--20.

\bibitem{gilbarg1977elliptic}
{\sc D.~Gilbarg, N.~S. Trudinger, D.~Gilbarg, and N.~Trudinger}, {\em Elliptic
  partial differential equations of second order}, vol.~224, Springer, 1977.

\bibitem{harker1990finite}
{\sc P.~T. Harker and J.-S. Pang}, {\em Finite-dimensional variational
  inequality and nonlinear complementarity problems: a survey of theory,
  algorithms and applications}, Math. Program., 48 (1990), pp.~161--220.

\bibitem{hou2023linear}
{\sc D.~Hou, L.~Ju, and Z.~Qiao}, {\em A linear second-order maximum bound
  principle-preserving bdf scheme for the allen-cahn equation with a general
  mobility}, Mathematics of Computation, 92 (2023), pp.~2515--2542.

\bibitem{ito2008lagrange}
{\sc K.~Ito and K.~Kunisch}, {\em Lagrange multiplier approach to variational
  problems and applications}, SIAM, 2008.

\bibitem{Ju2018EnergyStability}
{\sc L.~Ju, X.~Li, Z.~Qiao, and H.~Zhang}, {\em Energy stability and error
  estimates of exponential time differencing schemes for the epitaxial growth
  model without slope selection}, Math. Comp., 87 (2018), pp.~1859--1885.

\bibitem{Leslie1979Theory}
{\sc F.~M. Leslie}, {\em Theory of flow phenomena in liquid crystals}, in
  Advances in Liquid Crystals, vol.~4, Elsevier, 1979, pp.~1--81.

\bibitem{li2020arbitrarily}
{\sc B.~Li, J.~Yang, and Z.~Zhou}, {\em Arbitrarily high-order exponential
  cut-off methods for preserving maximum principle of parabolic equations},
  SIAM Journal on Scientific Computing, 42 (2020), pp.~A3957--A3978.

\bibitem{Li2017SecondOrder}
{\sc D.~Li and Z.~Qiao}, {\em On second order semi-implicit {Fourier} spectral
  methods for 2d {Cahn}--{Hilliard} equations}, J. Sci. Comput., 70 (2017),
  pp.~301--341.

\bibitem{Lowengrub1998QuasiIncompressible}
{\sc J.~Lowengrub and L.~Truskinovsky}, {\em Quasi--incompressible
  {Cahn}--{Hilliard} fluids and topological transitions}, Proc. R. Soc. A:
  Math. Phys. Eng. Sci., 454 (1998), pp.~2617--2654.

\bibitem{shen2011spectral}
{\sc J.~Shen, T.~Tang, and L.-L. Wang}, {\em Spectral methods: algorithms,
  analysis and applications}, vol.~41, Springer Science \& Business Media,
  2011.

\bibitem{shen2016maximum}
{\sc J.~Shen, T.~Tang, and J.~Yang}, {\em On the maximum principle preserving
  schemes for the generalized {Allen}--{Cahn} equation}, Commun. Math. Sci., 14
  (2016), pp.~1517--1534.

\bibitem{Shen2019NewClass}
{\sc J.~Shen, J.~Xu, and J.~Yang}, {\em A new class of efficient and robust
  energy stable schemes for gradient flows}, SIAM Rev., 61 (2019),
  pp.~474--506.

\bibitem{Shen2015Efficient}
{\sc J.~Shen, X.~Yang, and H.~Yu}, {\em Efficient energy stable numerical
  schemes for a phase field moving contact line model}, J. Comput. Phys., 284
  (2015), pp.~617--630.

\bibitem{van2019positivity}
{\sc J.~J. van~der Vegt, Y.~Xia, and Y.~Xu}, {\em Positivity preserving
  limiters for time-implicit higher order accurate discontinuous galerkin
  discretizations}, SIAM J. Sci. Comput., 41 (2019), pp.~A2037--A2063.

\bibitem{Yang2017EfficientLinear}
{\sc X.~Yang and L.~Ju}, {\em Efficient linear schemes with unconditional
  energy stability for the phase field elastic bending energy model}, Comput.
  Methods Appl. Mech. Engrg., 315 (2017), pp.~691--712.

\bibitem{Yang2017Numerical}
{\sc X.~Yang, J.~Zhao, and Q.~Wang}, {\em Numerical approximations for the
  molecular beam epitaxial growth model based on the invariant energy
  quadratization method}, J. Comput. Phys., 333 (2017), pp.~104--127.

\bibitem{zorich2016mathematical}
{\sc V.~A. Zorich and O.~Paniagua}, {\em Mathematical analysis II}, vol.~220,
  Springer, 2016.

\end{thebibliography}
\end{document}